\newtheorem{theorem}{Theorem}[section] 
\newtheorem{lemma}[theorem]{Lemma} 
\newtheorem{corollary}[theorem]{Corollary} 
\newtheorem{proposition}[theorem]{Proposition}
\newtheorem{conjecture}[theorem]{Conjecture} 
\theoremstyle{definition}
\newtheorem{definition}[theorem]{Definition} 
\newtheorem{remark}[theorem]{Remark}
\newtheorem{example}[theorem]{Example}
\title[]{Low degree morphisms of $E(5,10)$-generalized Verma modules}
\author{Nicoletta Cantarini}\author{Fabrizio Caselli}
\address{Fabrizio Caselli and Nicoletta Cantarini, Dipartimento di matematica, Universit\`a di Bologna, Piazza di Porta San Donato 5, 40126 Bologna, Italy}
\email{fabrizio.caselli@unibo.it}
\email{nicoletta.cantarini@unibo.it}
\subjclass[2010]{17B15, 17B25 (primary), 17B65, 17B70 (secondary)}
\keywords{Lie superalgebras, Verma modules, Singular vectors}
 \DeclareMathOperator{\Sym}{Sym}
  \DeclareMathOperator{\Hom}{Hom}
\newcommand{\N}{\mathbb{N}}
\newcommand{\C}{\mathbb{C}}
\newcommand{\Z}{\mathbb{Z}}
\newcommand{\slc}{\mathfrak{sl}_5}
\newcommand{\inlinewedge}{\textrm{\raisebox{0.6mm}{\footnotesize $\bigwedge$}}}
\newcommand{\displaywedge}{\textrm{\raisebox{0.6mm}{\tiny $\bigwedge$}}}
\begin{document}
\maketitle
\begin{abstract} In this paper we face the study of the representations of the exceptional Lie superalgebra $E(5,10)$.
We recall the construction of generalized Verma modules and give a combinatorial description of the restriction to $\mathfrak{sl}_5$ of the Verma module induced
by the trivial representation. We use this description to classify morphisms between Verma modules of degree one, two and three
proving in these cases a conjecture given by Rudakov \cite{R}. A key tool is the notion of dual morphism between Verma modules. 
\end{abstract}
\section{Introduction}
Infinite dimensional linearly compact simple Lie superalgebras over the complex numbers were classified by Victor Kac in 1998 \cite{K}. A complete list, up to isomorphisms, consists of ten
infinite series and five exceptions, denoted by $E(1,6)$, $E(3,6)$, $E(3,8)$, $E(5,10)$ and $E(4,4)$. See also \cite{CK, S1, S2, S3} for the genesis of these superalgebras.
Some years later Kac and Rudakov
initiated the study of the representations of these algebras \cite{KR1, KR2, KR3, KR} developing a general theory of Verma modules that we briefly recall.

Let $L=\oplus_{j\in\Z}L_j$ be a $\Z$-graded Lie superalgebra, let $L_-=\oplus_{j<0}L_j$, $L_+=\oplus_{j>0}L_j$ and $L_{\geq 0}=L_0\oplus L_+$.
We denote by $U(L)$ the universal enveloping algebra of $L$.
If $F$ is an irreducible $L_0$-module  we define
$$M(F)=U(L)\otimes_{U(L_{\geq 0})} F$$
where we extend the action of $L_0$ to $L_{\geq 0}$ by letting $L_+$ act trivially on $F$.
We call $M(F)$ a  minimal generalized Verma module associated to $F$. If $M(F)$ is not irreducible we say that it is degenerate.

In \cite{KR1, KR2, KR3, KR}, a complete description of the degenerate Verma modules for $E(3,6)$ and
$E(3,8)$ is given, as well as of their unique irreducible quotients. 
In \cite{KR} some basic ideas and constructions are settled also for $E(5,10)$. In this case Kac and Rudakov conjecture a complete list of $L_0$-modules which
give rise to the degenerate
Verma modules (see Conjecture \ref{conjecture}).

In 2010 Rudakov tackled the proof of the conjecture through the study of morphisms between Verma modules. The existence of a degenerate Verma module
is indeed strictly related to the existence of such morphisms of positive degree (see Proposition \ref{morphism}). In \cite{R} Rudakov classified 
morphisms of degree one  and gave some examples of morphisms of degree at most 5. He also conjectured that there exists no morphism of higher degree and
that his list exhausts all the examples. A more general family of modules,
possibly induced from infinite-dimensional $\slc$-modules,
had been studied in \cite{GLS}, where some of Rudakov's examples in degree one and two had been obtained through the use of the
computer. 

In this paper we study morphisms between generalized Verma modules and to this aim we analyze the structure of the universal enveloping algebra $U_-=
U(L_-)$
as an $L_0$-module.
This analysis has its own interest and
provides an explicit combinatorial description of the action of $L_0$. This description is the main ingredient in our study of morphisms,
together with a systematic use of the dominance order of the weights of the $L_0$-modules.
Our main result is the proof of Rudakov's conjecture in degree two and three (see Theorems \ref{teorema2}, \ref{teorema3}).
A useful observation that we made is that if there exists a morphism $\varphi: M(V) \rightarrow M(W)$
between generalized Verma modules of degree $d$, then there exists a dual morphism $\psi: M(W^*) \rightarrow M(V^*)$ of the same degree. 
This duality is here proved in low degree for the purpose of this work but it holds in a much wider context as a consequence of the fact that
the conformal dual of a Verma module is itself a Verma module. This will be shown in  a forthcoming paper.

The paper is organized as follows: in Section \ref{S1} we recall the basic definitions and fix the notation. Section \ref{S3} is dedicated to Verma modules. Here we
characterize degenerate Verma modules in terms of singular vectors and morphisms. In Section \ref{S4}, following \cite{R}, we give examples of morphisms of degree one, two and three.
Section \ref{S5} contains our first main result on the structure of $U_-$ as an $L_0$-module: we construct an explicit basis of $U_-$ and describe
its combinatorial properties. Section \ref{S6} is dedicated to the analysis of the dominance order of the weights of the basis elements of $U_-$.
In Section \ref{six} we develop the idea of dual morphism between generalized Verma modules and establish sufficient conditions for the existence of such a morphism 
(see Remark \ref{lemdual2}). Finally, Sections \ref{S7}, \ref{eight} and \ref{nine} contain the classification of morphisms of degree one, two and three, respectively.
 
We thank Victor Kac for useful discussions.

\section{Preliminaries}\label{S1}
We let $\N=\{0,1,2,3,\dots\}$ be the set of non-negative integers and for $n\in\N$ we set $[n]=\{i\in\N ~|~ 1\leq i\leq n\}$.

If $P$ is a proposition we let $\chi_{P}=1$ if $P$ is true and $\chi_{P}=0$ if $P$ is false.

We consider the simple, linearly compact Lie superalgebra of exceptional type $L=E(5,10)$ whose even and odd parts are
as follows: $L_{\bar{0}}$ consists of zero-divergence vector fields in five (even) indeterminates $x_1,\ldots,x_5$, i.e., 
\[L_{\bar{0}}=S_5=\{X=\sum_{i=1}^5f_i\partial_i ~|~ f_i\in\C[[x_1,\dots,x_5]], \textrm{div}(X)=0\},\]
where $\partial_i=\partial_{x_i}$,
and $L_{\bar{1}}=\Omega^2_{cl}$ consists of closed two-forms in the five indeterminates $x_1,\ldots,x_5$.
The bracket between a vector field and a form is given by the Lie derivative and for $f,g\in \C[[x_1,\dots,x_5]]$ we have
$$[fdx_i\wedge dx_j,g dx_k\wedge dx_l]=\varepsilon_{ijkl}fg\partial_{t_{ijkl}}$$ 
where, for $i,j,k,l\in [5]$, $\varepsilon_{ijkl}$ and $t_{ijkl}$ are defined as follows: if $|\{i,j,k,l\}|=4$ we let $t_{ijkl}\in [5]$ be such that $|\{i,j,k,l,t_{ijkl}\}|=5$ and $\varepsilon_{ijkl}$ be the sign of the permutation 
$(i,j,k,l,t_{ijkl})$. If $|\{i,j,k,l\}|<4$ we let $t_{ijkl}=1$ (this choice will be irrelevant) and $\varepsilon_{ijkl}=0$. 

From now on we shall denote $dx_i\wedge dx_j$ simply by $d_{ij}$.

The Lie superalgebra $L$ has a consistent, irreducible, transitive $\Z$-grading of depth 2 where,
for $k\in\N$,
\begin{align*}
L_{2k-2}&=\langle f\partial_i ~|~i=1,\dots,5, f\in\C[[x_1,\dots, x_5]]_{k}\rangle\cap S_5\\
L_{2k-1}&=\langle fd_{ij} ~|~ i,j=1,\dots,5, f\in\C[[x_1,\dots, x_5]]_{k}\rangle\cap\Omega^2_{cl}
\end{align*}
where 
by $\C[[x_1,\dots, x_5]]_{k}$ we denote the homogeneous component of $\C[[x_1,\dots, x_5]]$ of degree $k$.

Note that $L_0\cong \mathfrak{sl}_5$, $L_{-2}\cong (\C^5)^*$, $L_{-1}\cong \inlinewedge^2\C^5$ as $L_0$-modules (where $\C^5$ denotes the
standard $\mathfrak{sl}_5$-module).
We set $L_{-}=L_{-2}\oplus L_{-1}$, $L_{+}=\oplus_{j>0}L_j$ and $L_{\geq 0}=L_0\oplus L_+$.
We denote by $U$ (resp.\ $U_{-}$) the universal enveloping algebra of $L$ (resp.\ $L_-$). Note that $U_-$ is
an $L_0$-module with respect to the adjoint action: for $x\in L_0$ and $u\in U_-$,
$$x.u=[x,u]=xu-ux.$$
We also point out that the $\Z$-grading of $L$ induces a $\Z$-grading on the enveloping algebra $U_-$.
It is customary, though, to invert the sign of the degrees hence getting a grading over $\N$. Note that the homogeneous component
$(U_-)_d$ of degree $d$
of $U_-$ under this grading is an $L_0$-submodule. Section \ref{S3} will be dedicated to the study of
these homogeneous components.

We fix the Borel subalgebra $\langle x_i\partial_j, h_{ij}=x_i\partial_i-x_j\partial_j ~|~ i<j\rangle$ of $L_0$ and we consider the usual base of the corresponding root system given by $\{\alpha_{12},\ldots,\alpha_{45}\}$. We let $\Lambda$ be the weight lattice of $\frak{sl}_5$ and we express all weights of $\frak{sl}_5$ using their coordinates with respect to the fundamental weights $\omega_{12},\omega_{23},\omega_{34},\omega_{45}$, i.e., for $\lambda\in \Lambda$ we write $\lambda=(\lambda_{12},\ldots,\lambda_{45})$ for some $\lambda_{i\,i+1}\in \mathbb Z$ to mean $\lambda=\lambda_{12}\omega_{12}+\cdots+\lambda_{45}\omega_{45}$.

For $i<j$ we denote as usual
\[
\alpha_{ij}=\sum_{k=i}^{j-1}\alpha_{k\,k+1}
\]
and $\alpha_{ji}=-\alpha_{ij}$. For notational convenience we also let $\alpha_{ii}=0$.
Viewed as elements in the weight lattice we have 
\[
\alpha_{12}=(2,-1,0,0),\,\alpha_{23}=(-1,2,-1,0),\,\alpha_{34}=(0,-1,2,-1),\, \alpha_{45}=(0,0,-1,2).
\]
 If $\lambda\in \Lambda$ is a weight,  we use the following convention: for all $1\leq i<j\leq 5$ we let
\[
\lambda_{ij}=\sum_{k=i}^{j-1}\lambda_{k\,k+1}.
\]
If $V$ is a $\frak {sl}_5$-module and $v\in V$ is a weight vector we denote by $\lambda(v)$ the weight of $v$ and by $\lambda_{ij}(v)=(\lambda(v))_{ij}$.

If $\lambda=(a,b,c,d)\in \Lambda$ is a dominant weight, i.e. $a,b,c,d\geq 0$, let us denote by $F(\lambda)=F(a,b,c,d)$ the irreducible $\mathfrak{sl}_5$-module of highest 
weight $\lambda$. In this paper we always think of $F(a,b,c,d)$ as the irreducible submodule of \[\Sym^a(\C^5)\otimes \Sym^b(\displaywedge^2(\C^5))\otimes 
\Sym^c(\displaywedge^2(\C^5)^*)\otimes \Sym^d((\C^5)^*)\] generated by the highest weight vector $x_1^ax_{12}^b{x_{45}^*}^c{x_5^*}^d$
where  $\{x_1,\dots, x_5\}$ denotes the standard basis of $\C^5$, $x_{ij}=x_i\wedge x_j$, and $x_i^*$ and $x_{ij}^*$ are the corresponding dual
basis elements. Besides, for a weight $\lambda=(a,b,c,d)$ we let  $\lambda^*=(d,c,b,a)$, so that $F(\lambda)^*\cong F(\lambda^*)$.
 
Notice that $L_1\cong F(1,1,0,0)$ and that $x_5d_{45}$ is a lowest weight vector in $L_1$. Moreover,
for $j\geq 1$, we have $L_j=L_1^j$.

\section{Generalized Verma modules and morphisms}\label{S3}
We recall the definition of generalized Verma modules introduced in \cite{KR1}. For the reader's convenience we also sketch some proofs of basic results.
Given an $L_0$-module $V$ we extend it to
an $L_{\geq 0}$-module by letting $L_+$ act trivially, and define
$$M(V)=U\otimes_{U(L_{\geq 0})}V.$$
Note that $M(V)$ has a $L$-module structure by multiplication on the left, and is called the (generalized) Verma module associated to $V$. We also observe that $M(V)\cong U_{-}\otimes_{\C}V$ as $\C$-vector spaces.

 If $V$ is finite-dimensional and irreducible, then $M(V)$ is called a minimal Verma module. We denote by $M(\lambda)$ the minimal Verma module $M(F(\lambda))$. A minimal Verma module is said to be non-degenerate if it is irreducible and degenerate if it is not irreducible.
\begin{definition}
We say that an element $w\in M(V)$ is homogeneous of degree $d$ if $w\in (U_-)_d\otimes V$.
\end{definition}
\begin{definition} A vector $w\in M(V)$ is called a singular vector if it satisfies the following conditions:
\begin{itemize}
\item[(i)] $x_i\partial_{i+1}w=0$ for every $i=1,\dots,4$;
\item[(ii)] $zw=0$ for every $z\in L_1$;
\item[(iii)] 
$w$ does not lie in $V$.
\end{itemize}
\end{definition}
We observe that the homogeneous components of positive degree of a singular vector are singular vectors. 
The same holds for its weight components. From now on we will thus assume that a singular vector is a homogeneous weight
vector unless otherwise specified.
Notice that if  condition (i) is satisfied then condition (ii) holds if $x_5d_{45}w=0$ since $x_5d_{45}$ is a lowest weight vector in $L_1$. 

\begin{proposition} \label{dege=sing}A minimal Verma module $M(V)$ is degenerate if and only if it contains a singular vector.
\end{proposition}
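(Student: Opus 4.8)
The plan is to prove the two implications separately, exploiting the $\mathbb{N}$-grading $M(V)=\bigoplus_{d\ge 0}M(V)_d$ with $M(V)_0=V$, together with the facts that $L_+$ lowers the degree, $L_0$ preserves it, $L_-$ raises it, and that each $M(V)_d=(U_-)_d\otimes V$ is finite dimensional (since $L_-$ is finite dimensional).

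For the implication ``singular vector $\Rightarrow$ degenerate'', I would start from a homogeneous weight singular vector $w$ of degree $d\ge 1$ and show that $U(L)w$ is a proper nonzero submodule. Conditions (i) and (ii) say that $w$ is a highest weight vector for $L_0$ annihilated by $L_1$, hence by all of $L_+$ since $L_j=L_1^j$. A PBW argument then gives $U(L)w=U(L_-)U(L_0)w$, and since $U(L_0)w$ sits in degree $d$ while $U(L_-)$ only raises the degree, the whole submodule lies in $\bigoplus_{e\ge d}M(V)_e$. As $d\ge 1$, it misses $V=M(V)_0$ and is therefore proper; being nonzero, it witnesses degeneracy.

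For the converse I would take a proper nonzero submodule $N$. First, $N\cap V$ is an $L_0$-submodule of the irreducible module $V$, and it cannot equal $V$ (otherwise $N\supseteq U(L_-)V=M(V)$); hence $N\cap V=0$, so every nonzero element of $N$ has positive top degree. The key step is to choose $0\ne n\in N$ whose top degree $d$ is \emph{minimal}: then for every $z\in L_1$ the element $zn\in N$ has strictly smaller top degree, forcing $zn=0$, so $L_1$ annihilates $n$. Using $[L_1,L_0]\subseteq L_1$ one checks that $L_1$ in fact annihilates the entire finite-dimensional $L_0$-submodule $U(L_0)n\subseteq N$, and choosing an $\mathfrak{sl}_5$-highest weight vector $n'$ inside $U(L_0)n$ yields an element of $N$ satisfying (i) and (ii). Finally, since both conditions are compatible with the grading, every homogeneous component of $n'$ again satisfies (i) and (ii); as $n'\notin V$, a nonzero component of degree $\ge 1$ is the desired singular vector.

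The main obstacle I anticipate is that a submodule $N$ need not be visibly graded, so one cannot naively extract homogeneous components of an arbitrary element of $N$ and remain inside $N$. The minimal-top-degree device is what circumvents this: it produces an element of $N$ killed by $L_1$ without assuming gradedness, and the passage to a highest weight vector followed by taking a top homogeneous component (legitimate precisely because (i) and (ii) act diagonally with respect to the degree, as noted in the remark preceding the statement) restores homogeneity only at the very end. I would keep careful track of the degree shifts under $L_0$, $L_1$ and $L_-$ throughout, since this bookkeeping underlies every step.
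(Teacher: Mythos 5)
Your proof is correct and takes essentially the same route as the paper: the forward direction via the submodule generated by the singular vector lying in degrees $\geq d$, and the converse by producing an element of the proper submodule killed by $L_1$ (your minimal-top-degree choice is just a well-ordering rephrasing of the paper's ``repeatedly apply $L_1$'', which works because $L_1$ lowers degree and degrees are bounded below) and then taking a highest weight vector in the $L_0$-module it generates. Your added care about $N\cap V=0$ (which secures condition (iii)) and the final extraction of a positive-degree homogeneous component only make explicit points the paper leaves implicit.
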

\begin{proof}
Let $w\in M(V)$ be a singular vector. We may assume that $w$ is homogeneous of degree $d>0$.  Hence the singular vector $w$ generates a submodule
of $M(V)$ which is proper since it is contained in $\oplus_{k\geq d} (U_-)_k\otimes V$.

On the other hand, if $M(V)$ is degenerate let us consider a proper non-zero submodule $W$ of $M(V)$. Let
$z\in W$ be a non-zero vector. By repeatedly applying $L_1$ to $z$ if necessary we can find a non-zero element $w\in W$
such that $L_1w=0$, since the action of $L_1$ lowers the degree of the homogeneous components of $z$ by 1.
We observe that $L_1$ vanishes on the $L_0$-module generated by $w$. Any highest weight vector in
such a module is a singular vector.    
\end{proof}

Degenerate Verma modules can also be described in terms of morphisms.
A linear map $\varphi: M(V)\rightarrow M(W)$ can always be associated to an element $\Phi\in U_{-}\otimes \Hom(V,W)$ as follows: for $u\in U_-$ and $v\in V$ we let
$$\varphi(u\otimes v)=u\Phi(v)$$
where, if $\Phi=\sum_iu_i\otimes \theta_i$ with $u_i\in U_-$,
$\theta_i\in \Hom(V,W),$ we let $\Phi(v)=\sum_iu_i\otimes \theta_i(v)$.
We will say that $\varphi$ (or $\Phi$) is a morphism of degree $d$ if $u_i\in (U_-)_d$ for every $i$.

\medskip

The following proposition characterizes morphisms between Verma modules.

\begin{proposition}\cite{R}\label{morphisms}
Let $\varphi: M(V)\rightarrow M(W)$ be the linear map associated with the element $\Phi\in U_{-}\otimes \Hom(V,W)$.
Then $\varphi$ is a morphism of $L$-modules if and only if the following conditions
hold:
\begin{itemize}
\item[(a)] $L_0.\Phi=0$;
\item[(b)] $t\Phi(v)=0$ for every $t\in L_1$ and for every $v\in V$.
\end{itemize} 
\end{proposition}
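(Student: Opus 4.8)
The plan is to use that $M(V)\cong U_-\otimes_\C V$ is generated as an $L$-module by the subspace $1\otimes V$, and that $\varphi$ is \emph{a priori} $U_-$-linear: for $u'\in U_-$ one has $\varphi(u'u\otimes v)=u'u\Phi(v)=u'\,\varphi(u\otimes v)$, so $\varphi$ already commutes with the action of $L_-$. The set $\mathcal S$ of elements of $L$ whose action commutes with $\varphi$ on all of $M(V)$ is readily seen to be a sub-superalgebra of $L$, and $L$ is generated by $L_-$, $L_0$ and $L_1$ (recall $L_j=L_1^j$ for $j\geq 1$, so $L_+$ is generated by $L_1$). It therefore suffices to prove that (a) is equivalent to $L_0\subseteq\mathcal S$ and that, assuming (a), condition (b) is equivalent to $L_1\subseteq\mathcal S$.

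The computational input I would record first is the form of the $L_{\geq 0}$-action when a Verma module is written as $U_-\otimes(\,\cdot\,)$. Writing $xu=\mathrm{ad}(x)(u)+ux$ in $U$ and pushing the right-hand factor through the isomorphism $U\otimes_{U(L_{\geq 0})}W\cong U_-\otimes W$, one finds for $x\in L_0$ that $x(u\otimes w)=\mathrm{ad}(x)(u)\otimes w+u\otimes xw$, where $\mathrm{ad}(x)$ preserves $U_-$ and acts on it as a derivation; this is exactly the tensor-product action that underlies the meaning of $x.\Phi$ on $U_-\otimes\Hom(V,W)$ in (a). For $t\in L_1$ the two facts I would use are that $t(1\otimes v)=0$, since $L_+$ annihilates $V$, and that $\mathrm{ad}(t)$ maps $L_-$ into $L_0\oplus L_-$, because $[L_1,L_{-1}]\subseteq L_0$ and $[L_1,L_{-2}]\subseteq L_{-1}$.

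For necessity I would evaluate on the generators $1\otimes v$. A direct computation with the $L_0$-action above gives $x\,\varphi(1\otimes v)-\varphi(x(1\otimes v))=(x.\Phi)(v)$ for $x\in L_0$, and since the evaluation map $U_-\otimes\Hom(V,W)\to\Hom(V,U_-\otimes W)$ is injective, this vanishes for all $v$ exactly when $x.\Phi=0$; ranging over $L_0$ this is (a). For $t\in L_1$, commutation on $1\otimes v$ reads $t\,\Phi(v)=t\,\varphi(1\otimes v)=\varphi(t(1\otimes v))=0$, which is (b). Thus a morphism satisfies (a) and (b). For the $L_0$ part of sufficiency the same computation extends verbatim using $U_-$-linearity and the derivation property: one gets $x\,\varphi(u\otimes v)-\varphi(x(u\otimes v))=u\,(x.\Phi)(v)$ for every $u\in U_-$, so (a) forces $L_0\subseteq\mathcal S$.

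The heart of the argument, and the step I expect to be the main obstacle, is propagating commutation with $t\in L_1$ from $1\otimes V$ to all of $M(V)$: unlike for $L_0$, here $\mathrm{ad}(t)$ does not preserve $U_-$, so no purely $U_-$-linear reduction is available. I would instead induct on the length of $u\in U_-$. Writing $u\otimes v=y(u'\otimes v)$ with $y\in L_-$ and $u'$ shorter, the module identity $t(ym)=[t,y]m+(-1)^{|t||y|}y(tm)$, together with $[t,y]\in L_0\oplus L_-\subseteq\mathcal S$ (already established) and $y\in L_-\subseteq\mathcal S$, reduces commutation on $u\otimes v$ to commutation of $t$ on the shorter element $u'\otimes v$, closing the induction; the base case is (b). This is precisely why the $L_0$ statement must be proved first and then fed back in. Once $L_-,L_0,L_1\subseteq\mathcal S$, the subalgebra property of $\mathcal S$ and the generation of $L$ give $\mathcal S=L$, that is, $\varphi$ is a morphism of $L$-modules.
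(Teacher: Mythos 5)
Your proof is correct. The paper itself gives no proof of this proposition---it is quoted from Rudakov's paper \cite{R}---so there is nothing internal to compare against; your argument (morphism iff commutation with $L_-$, $L_0$ and $L_1$, since $L_j=L_1^j$ generates $L_+$; $L_-$-commutation for free from $U_-$-linearity; the identity $x\varphi(u\otimes v)-\varphi(x(u\otimes v))=u\,(x.\Phi)(v)$ for $x\in L_0$; and the induction on PBW length using $[L_1,L_-]\subseteq L_0\oplus L_-$ to propagate $L_1$-commutation from $1\otimes V$ to all of $M(V)$) is precisely the standard argument behind the cited result, carried out completely and with the superalgebra signs handled correctly.
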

We observe that if $M(V)$ is a minimal Verma module and condition $(a)$ holds it is enough to verify  condition $(b)$ for an element $t$ generating
$L_1$ as an $L_0$-module and for $v$ a highest weight vector in $V$.

\begin{proposition}\label{morphism} Let $M(\mu)$ be a minimal Verma module. Then the following are equivalent:
\begin{itemize}
 \item[(a)] $M(\mu)$ is degenerate;
 \item[(b)] $M(\mu)$ contains a singular vector;
 \item[(c)] there exists a minimal Verma module $M(\lambda)$ and a morphism $\varphi:M(\lambda)\rightarrow M(\mu)$ of positive degree.
\end{itemize}
 \end{proposition}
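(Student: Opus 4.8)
The plan is to prove the chain of equivalences $(a)\Leftrightarrow(b)\Leftrightarrow(c)$. The equivalence $(a)\Leftrightarrow(b)$ is already established as Proposition \ref{dege=sing}, so the real work lies in connecting these to the existence of a morphism $(c)$. I would prove $(b)\Rightarrow(c)$ and $(c)\Rightarrow(b)$ separately, using the characterization of morphisms furnished by Proposition \ref{morphisms}.

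\medskip

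For the implication $(c)\Rightarrow(b)$, suppose $\varphi:M(\lambda)\to M(\mu)$ is a morphism of positive degree $d$, associated to $\Phi\in U_-\otimes\Hom(F(\lambda),F(\mu))$. Let $v_\lambda\in F(\lambda)$ be a highest weight vector, and set $w=\Phi(v_\lambda)\in (U_-)_d\otimes F(\mu)\subseteq M(\mu)$. The idea is that $w$ is the desired singular vector. Condition (ii) of singularity, namely $L_1 w=0$, is exactly condition (b) of Proposition \ref{morphisms} applied to $v_\lambda$. For condition (i), I would use that $\varphi$ is a morphism of $L$-modules and in particular commutes with the Borel action: since $v_\lambda$ is a highest weight vector, $x_i\partial_{i+1}v_\lambda=0$ in $M(\lambda)$, hence $x_i\partial_{i+1}w=x_i\partial_{i+1}\varphi(1\otimes v_\lambda)=\varphi(x_i\partial_{i+1}(1\otimes v_\lambda))=\varphi(1\otimes x_i\partial_{i+1}v_\lambda)=0$. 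Finally, condition (iii) requires $w\notin F(\mu)$; this follows because $w$ has degree $d>0$ and $\Phi$ is nonzero, so $w$ lies in the positive-degree part of $M(\mu)$ and is nonzero (one must check $\Phi(v_\lambda)\neq 0$, which holds because the $L_0$-invariance in condition (a) forces $\Phi$ to be determined by its value on a highest weight vector, so $\Phi\neq 0$ implies $\Phi(v_\lambda)\neq 0$).

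\medskip

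For the implication $(b)\Rightarrow(c)$, suppose $M(\mu)$ contains a singular vector $w$, which we may take homogeneous of degree $d>0$ and of some weight; let $\lambda$ be the weight of $w$. Since $w$ satisfies condition (i), it is a highest weight vector for the $L_0=\slc$-action, generating an irreducible $\slc$-submodule isomorphic to $F(\lambda)$ inside $(U_-)_d\otimes F(\mu)$. The plan is to construct the morphism $\varphi:M(\lambda)\to M(\mu)$ by mapping the highest weight vector $v_\lambda\in F(\lambda)$ to $w$ and extending $U$-linearly; equivalently, one defines $\Phi\in U_-\otimes\Hom(F(\lambda),F(\mu))$ as the $L_0$-equivariant element corresponding to the inclusion $F(\lambda)\hookrightarrow (U_-)_d\otimes F(\mu)$. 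I would then verify conditions (a) and (b) of Proposition \ref{morphisms}: condition (a), that $L_0.\Phi=0$, is precisely the statement that this inclusion is $\slc$-equivariant, which holds by construction since $w$ generates a copy of $F(\lambda)$; condition (b), that $t\Phi(v_\lambda)=tw=0$ for $t\in L_1$, is exactly condition (ii) of the singular vector. By the remark following Proposition \ref{morphisms}, checking (b) on the highest weight vector $v_\lambda$ suffices.

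\medskip

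The main obstacle, and the point deserving most care, is the construction of $\Phi$ in the direction $(b)\Rightarrow(c)$: one must produce a genuine $L_0$-module homomorphism $F(\lambda)\to (U_-)_d\otimes F(\mu)$ out of the single vector $w$ and verify it is well defined, i.e., that sending $v_\lambda\mapsto w$ extends consistently over all of $F(\lambda)$. This is where irreducibility of $F(\lambda)$ is essential: since $F(\lambda)$ is generated by $v_\lambda$ under the lowering operators and $w$ is a highest weight vector of the same weight, the map is forced and automatically equivariant, but one should confirm that the image is the irreducible module $F(\lambda)$ rather than a larger indecomposable piece, which is guaranteed because $w$ satisfies the highest weight condition (i). Once $\Phi$ is in hand, translating the singular-vector conditions into the morphism conditions of Proposition \ref{morphisms} is essentially a restatement, so the equivalence follows cleanly.
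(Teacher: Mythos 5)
Your proposal is correct and follows essentially the same route as the paper: the equivalence $(a)\Leftrightarrow(b)$ is delegated to Proposition \ref{dege=sing}, the implication $(c)\Rightarrow(b)$ is obtained by showing $\varphi(1\otimes v_\lambda)$ is a singular vector, and $(b)\Rightarrow(c)$ by sending a highest weight vector to the singular vector $w$ and invoking Proposition \ref{morphisms}. The only difference is that you spell out details the paper leaves implicit (nonvanishing of $\Phi(v_\lambda)$ via irreducibility of $F(\lambda)$, and the $L_0$-equivariant construction of $\Phi$ from the copy of $F(\lambda)$ generated by $w$), all of which are correct.
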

 \begin{proof}
 We already know that condition (a) is equivalent to condition (b) by Proposition \ref{dege=sing}. 
Assume condition (c) holds: if $s\in F(\lambda)$ is a highest weight vector, then $\varphi(1\otimes s)$ is a singular vector in $M(\mu)$.

On the other hand, if $w$ is a singular vector in $M(\mu)$, we can define
$\varphi: M(\lambda(w))\rightarrow M(\mu)$ as the unique morphism of $L$-modules such that $\varphi(1\otimes s)=w$,  $s$ being a highest weight vector in $M(\lambda(w))$.
\end{proof}

\begin{remark}\label{dual}
Let $\varphi: M(V)\rightarrow M(W)$ be a linear map of degree $d$ associated to an element $\Phi\in U_-\otimes \Hom(V,W)$ that satisfies condition $(a)$ of Proposition \ref{morphisms}. Then there exists an $L_0$-morphism $\psi: (U_-)_d^*\rightarrow \Hom(V,W)$ such that
$\Phi= \sum_i u_i\otimes \psi(u_i^*)$ where $\{u_i, i\in I\}$ is any basis of $(U_-)_d$ and $\{u_i^*, i\in I\}$ is the corresponding dual basis.
\end{remark}

\begin{definition} Let $M(\mu)$ be a minimal Verma module and let $\pi: M(\mu)\rightarrow U_-\otimes F(\mu)_{\mu}$ be the natural projection,
$F(\mu)_{\mu}$ being the weight space of $F(\mu)$ of weight $\mu$. Given a singular vector $w\in M(\mu)$ we call $\pi(w)$ the leading term of $w$.
\end{definition}

\begin{proposition}\label{leading} If $w$ is a singular vector in $M(\mu)$ then:
\begin{itemize}
\item[(i)] $\pi(w)\neq 0$;
\item[(ii)] if two singular vectors in $M(\mu)$ have the same leading term then they coincide.
\end{itemize}
\end{proposition}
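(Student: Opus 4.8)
The plan is to deduce both statements from the single assertion: \emph{if $w\in M(\mu)$ is annihilated by all the raising operators $x_i\partial_{i+1}$, $i=1,\dots,4$, and $\pi(w)=0$, then $w=0$.} Notice that this uses only the $\slc$-highest weight condition (i) in the definition of singular vector, not the $L_1$-annihilation (ii). Granting the assertion, part (i) is immediate: a singular vector is a nonzero (condition (iii)) $\slc$-highest weight vector, so if its leading term $\pi(w)$ vanished we would obtain $w=0$, a contradiction. For part (ii), if $w_1,w_2$ are singular vectors with $\pi(w_1)=\pi(w_2)$, then by part (i) their common leading term is nonzero; since $\pi$ preserves weights and $w_1,w_2$ are weight vectors, they have the same weight, so $w:=w_1-w_2$ is an $\slc$-weight vector killed by all $x_i\partial_{i+1}$ with $\pi(w)=0$, whence $w=0$ and $w_1=w_2$.

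To prove the assertion I would work with a weight vector $w$ (legitimate, as just observed, in both applications) and decompose it according to the weight spaces of the second tensor factor: writing $M(\mu)\cong\bigoplus_\nu U_-\otimes F(\mu)_\nu$, set $w=\sum_\nu w_\nu$ with $w_\nu\in U_-\otimes F(\mu)_\nu$. Then $\pi(w)=w_\mu$, so the hypothesis reads $w_\mu=0$. Assume $w\neq0$ and choose $\nu_0$ maximal in the dominance order among the weights with $w_{\nu_0}\neq0$; since $w_\mu=0$ and $\mu$ is the highest weight of $F(\mu)$, necessarily $\nu_0\neq\mu$.

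The key computation is to apply a raising operator $e_i=x_i\partial_{i+1}$ and isolate one weight component. On $M(\mu)\cong U_-\otimes F(\mu)$ one has $e_i\cdot(u\otimes v)=(e_i.u)\otimes v+u\otimes e_iv$, where the first summand preserves the $F(\mu)$-weight of $v$ and the second raises it by $\alpha_{i\,i+1}$. Writing $w_{\nu_0}=\sum_j u_j\otimes v_j$ with $\{v_j\}$ a basis of $F(\mu)_{\nu_0}$, the component of $e_i\cdot w$ lying in $U_-\otimes F(\mu)_{\nu_0+\alpha_{i\,i+1}}$ receives no contribution from $w_{\nu_0+\alpha_{i\,i+1}}$ (which vanishes by maximality of $\nu_0$) and therefore equals $\sum_j u_j\otimes e_iv_j$. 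Since $e_i\cdot w=0$, this yields $\sum_j u_j\otimes e_iv_j=0$ for every $i=1,\dots,4$.

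It remains to conclude that all $u_j$ vanish, contradicting $w_{\nu_0}\neq0$; this duality step is the main obstacle. Pairing $\sum_j u_j\otimes e_iv_j=0$ with an arbitrary $\eta\in F(\mu)_{\nu_0+\alpha_{i\,i+1}}^*$ gives $\sum_j \eta(e_iv_j)\,u_j=0$, i.e. the functional $\xi\colon v\mapsto\eta(e_iv)$ in $F(\mu)_{\nu_0}^*$ lies in the kernel of $T\colon F(\mu)_{\nu_0}^*\to U_-$, $T(\xi)=\sum_j\xi(v_j)u_j$. As $\eta$ and $i$ vary these $\xi$ range over $\sum_i \mathrm{im}(e_i^T)=\big(\bigcap_i\ker(e_i|_{F(\mu)_{\nu_0}})\big)^{\perp}$. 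But $\bigcap_i\ker(e_i|_{F(\mu)_{\nu_0}})$ is the space of $\slc$-highest weight vectors of weight $\nu_0$ in the irreducible module $F(\mu)$, which is zero since $\nu_0\neq\mu$. Hence these $\xi$ span all of $F(\mu)_{\nu_0}^*$, forcing $T=0$ and thus every $u_j=0$, the desired contradiction. The delicate point is exactly this annihilator computation: irreducibility of $F(\mu)$ enters decisively, since it is the absence of highest weight vectors outside weight $\mu$ that makes the transposed raising operators jointly surjective onto $F(\mu)_{\nu_0}^*$.
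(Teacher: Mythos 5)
Your proof is correct and is essentially the paper's argument: both proofs isolate the component of $w$ whose $F(\mu)$-weight is maximal in the dominance order, apply the simple raising operators $x_i\partial_{i+1}$ (whose adjoint part preserves the $F(\mu)$-weight while the action on the second tensor factor raises it by a simple root), and conclude via the fact that the irreducible module $F(\mu)$ has no highest weight vectors outside weight $\mu$. The only difference is bookkeeping --- the paper expands $w$ over a weight basis of $(U_-)_d$ and shows directly that the maximal coefficient $v_{i_0}\in F(\mu)$ is a highest weight vector, whereas you expand over a basis of $F(\mu)_{\nu_0}$ and kill the $U_-$-coefficients by the transpose/annihilator computation --- which, as a minor bonus, never uses that $w$ is a weight vector.
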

\begin{proof} If $w$ is a weight vector homogeneous of degree $d$ then we can write $w=\sum_iu_i\otimes v_i$ for some basis $\{u_i\}$ of $(U_-)_d$ consisting of weight vectors
 and $v_i\in F(\mu)_{\lambda_i}$ for some weight $\lambda_i$.
Let $\lambda_{i_0}$ be maximal in the dominance order such that $v_{i_0}\neq 0$. Then $v_{i_0}$ is a highest weight vector in $F(\mu)$. Indeed, for 
$r<s$ we have:
\[0=x_r\partial_s w=\sum_i[x_r\partial_s,u_i]\otimes v_i+\sum_i u_i\otimes x_r\partial_s.v_i. \] 
By the maximality of $\lambda_{i_0}$ it follows that $x_r\partial_s.v_{i_0}=0$.
 $(ii)$ follows from $(i)$.
\end{proof}

\section{Examples}\label{S4}
In this section we give some examples of singular vectors and the corresponding morphisms of Verma modules. These were described in \cite{R}. We will need the following technical result.
\begin{lemma}\label{esempi}
Let $\varphi: M(\lambda)\rightarrow M(W)$ be a morphism of Verma modules of degree one associated to
 $\Phi=\sum_{i<j} d_{ij}\otimes \theta_{ij}$  and  let $s$ be a highest weight vector in $F(\lambda)$.
 Let $\tilde W$ be an $L_0$-module containing $W$ and let $\tilde{\theta}_{ij}\in \Hom(F(\lambda),\tilde W)$ be such that
the map $(U_-)_1^*\rightarrow \Hom(F(\lambda),\tilde W)$ given by  $d_{ij}^*\mapsto \tilde{\theta}_{ij}$ is  well defined and $L_0$-equivariant. Then 
$\tilde{\theta}_{ij}(s)={\theta}_{ij}(s)$ implies
 $\tilde{\theta}_{ij}(v)={\theta}_{ij}(v)$ for all $v\in F(\lambda)$.
\end{lemma}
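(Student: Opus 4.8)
The plan is to reduce the statement to the irreducibility of $F(\lambda)$ together with the $L_0$-equivariance already encoded in condition (a) of Proposition \ref{morphisms}. First I would record that the original family $d_{ij}^*\mapsto\theta_{ij}$ is itself $L_0$-equivariant: since $\varphi$ is a morphism, $\Phi$ satisfies $L_0.\Phi=0$ by Proposition \ref{morphisms}, so by Remark \ref{dual} the associated map $(U_-)_1^*\to\Hom(F(\lambda),W)$, $d_{ij}^*\mapsto\theta_{ij}$, is $L_0$-equivariant; composing with the equivariant inclusion $W\hookrightarrow\tilde W$ keeps it equivariant as a map into $\tilde W$. I would then set $\eta_{ij}=\tilde\theta_{ij}-\theta_{ij}\in\Hom(F(\lambda),\tilde W)$. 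The assignment $d_{ij}^*\mapsto\eta_{ij}$ is a difference of two $L_0$-equivariant maps, hence $L_0$-equivariant, and the hypothesis $\tilde\theta_{ij}(s)=\theta_{ij}(s)$ reads $\eta_{ij}(s)=0$ for all $i<j$. The goal becomes simply to show $\eta_{ij}\equiv 0$.

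The key step is to repackage the equivariant family $\{\eta_{ij}\}$ as a single $L_0$-module homomorphism out of $F(\lambda)$. Equivariance of $d_{ij}^*\mapsto\eta_{ij}$ is equivalent to $L_0$-invariance of the tensor $H=\sum_{i<j}d_{ij}\otimes\eta_{ij}\in(U_-)_1\otimes\Hom(F(\lambda),\tilde W)$. Using the canonical $L_0$-isomorphism $(U_-)_1\otimes\Hom(F(\lambda),\tilde W)\cong\Hom\bigl(F(\lambda),(U_-)_1\otimes\tilde W\bigr)$, valid because $F(\lambda)$ is finite dimensional, the tensor $H$ corresponds to the linear map $\hat H\colon F(\lambda)\to(U_-)_1\otimes\tilde W$ given by $\hat H(v)=\sum_{i<j}d_{ij}\otimes\eta_{ij}(v)$, and $L_0$-invariance of $H$ translates exactly into $\hat H$ being an $L_0$-module homomorphism. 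The vanishing $\eta_{ij}(s)=0$ gives $\hat H(s)=0$.

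Now I would invoke irreducibility directly: $\ker\hat H$ is an $L_0$-submodule of $F(\lambda)$ containing the nonzero highest weight vector $s$, so $\ker\hat H=F(\lambda)$ and $\hat H\equiv 0$. Since $\{d_{ij}\}_{i<j}$ is a basis of $(U_-)_1\cong\inlinewedge^2\C^5$, the identity $\sum_{i<j}d_{ij}\otimes\eta_{ij}(v)=0$ forces $\eta_{ij}(v)=0$ for every $i<j$ and every $v$, that is $\tilde\theta_{ij}(v)=\theta_{ij}(v)$ on all of $F(\lambda)$. I do not expect a serious obstacle here beyond bookkeeping; the only point that needs a little care is the passage between $L_0$-equivariance of $d_{ij}^*\mapsto\eta_{ij}$ and equivariance of $\hat H$, which rests on the finite-dimensionality of $F(\lambda)$ and on the self-duality of the pairing $d_{ij}\leftrightarrow d_{ij}^*$. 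Everything else is formal, which is precisely why the conclusion holds at the level of the $L_0$-equivariant data alone, with no appeal to condition (b) of Proposition \ref{morphisms}.
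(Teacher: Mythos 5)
Your proof is correct, and every step checks out: condition (a) of Proposition \ref{morphisms} together with Remark \ref{dual} does make $d_{ij}^*\mapsto\theta_{ij}$ an $L_0$-equivariant map into $\Hom(F(\lambda),\tilde W)$ after composing with the inclusion $W\hookrightarrow\tilde W$; the difference family $\eta_{ij}$ is then equivariant, the tensor $H=\sum_{i<j}d_{ij}\otimes\eta_{ij}$ is $L_0$-invariant, and the finite-dimensionality of $F(\lambda)$ legitimizes the identification of $H$ with an $L_0$-homomorphism $\hat H\colon F(\lambda)\to (U_-)_1\otimes\tilde W$, whose kernel is a submodule containing $s$ and hence all of $F(\lambda)$. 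The underlying mechanism is the same as the paper's — equivariance of both families plus irreducibility of $F(\lambda)$ propagates the equality from $s$ to everything — but the execution is genuinely different. The paper proves the propagation step by hand: it shows that $\tilde{\theta}_{ij}(v)=\theta_{ij}(v)$ for all $i\neq j$ implies $\tilde{\theta}_{ij}(x_h\partial_k.v)=\theta_{ij}(x_h\partial_k.v)$, via an explicit computation with the action formula $x_h\partial_k.\theta_{ij}=-\delta_{hi}\theta_{kj}-\delta_{hj}\theta_{ik}$ (with the convention $\theta_{rs}=-\theta_{sr}$ for $r>s$), implicitly using that monomials in the operators $x_h\partial_k$ applied to $s$ span $F(\lambda)$. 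Your abstract packaging buys two things: it eliminates the index computation entirely, and it works verbatim in any degree $d$ (replace $(U_-)_1$ by $(U_-)_d$ and the $\theta_{ij}$ by the $\theta_I^T$ of Remark \ref{thetaction}), whereas the paper's computation is specific to degree one. What the paper's route buys instead is the explicit formula for the $L_0$-action on the coefficients $\theta_{ij}$, which it records inside this very proof and which is the degree-one prototype of the formulas ($\Delta^h_{s\rightarrow r}$, $D^l_{r\rightarrow s}$) used systematically in the later sections; your argument, being coordinate-free, would have to develop that formula separately. One cosmetic remark: the kernel argument makes it transparent that only hypothesis (a) of Proposition \ref{morphisms} is used, a point the paper leaves implicit.
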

\begin{proof} It is enough to show that if $\tilde{\theta}_{ij}(v)={\theta}_{ij}(v)$
for some $v\in F(\lambda)$ and all $i\neq j$, then
$\theta_{ij}(x_h\partial_k.v)= \tilde{\theta}_{ij}(x_h\partial_k.v)$ for all $i\neq j$ and $h\neq k$. We have:
\begin{align*}
\tilde{\theta}_{ij}(x_h\partial_k.v)& =x_h\partial_k(\tilde{\theta}_{ij}(v))-(x_h\partial_k.\tilde{\theta}_{ij})(v)=
x_h\partial_k(\tilde{\theta}_{ij}(v))+\delta_{hi}\tilde{\theta}_{kj}(v)+\delta_{hj}\tilde{\theta}_{ik}(v)\\
&=x_h\partial_k({\theta}_{ij}(v))+\delta_{hi}{\theta}_{kj}(v)+\delta_{hj}{\theta}_{ik}(v)={\theta}_{ij}(x_h\partial_k.v)
\end{align*}
where we used Remark \ref{dual} in order to write the action of $L_0$ on the $\theta_{ij}$'s.
Namely, we have:
$$x_h\partial_k.\theta_{ij}=-\delta_{hi}\theta_{kj}-\delta_{hj}\theta_{ik}$$
where if $r>s$, $\theta_{rs}=-\theta_{sr}$.
\end{proof}

\begin{example}\label{nablaA}
Let us consider the Verma module $M(m,n,0,0)$. We first observe that $d_{12}\otimes x_1^mx_{12}^n$ is a singular vector
in $M(m,n,0,0)$. Indeed, for $i=1,\dots, 4$,
$$x_i\partial_{i+1}d_{12}\otimes x_1^mx_{12}^n=0;$$ 
besides,
$$x_5d_{45}d_{12}\otimes x_1^mx_{12}^n=x_5\partial_3\otimes x_1^mx_{12}^n=0.$$
By Proposition \ref{morphism} we can define a morphism of Verma modules $\nabla_A: M(m,n+1,0,0) \rightarrow M(m,n,0,0)$
by setting $\nabla_A(1\otimes s)=d_{12}\otimes x_1^mx_{12}^n$. By Lemma \ref{esempi} used with $\tilde W=\Sym^m(\C^5)\otimes \Sym^n(\inlinewedge^2\C^5)$ we have that $\nabla_A$ is associated to:
$$\sum_{i<j}d_{ij}\otimes \frac{\partial}{\partial x_{ij}}\in U_-\otimes \Hom(F(m,n+1,0,0),F(m,n,0,0)).$$
\end{example}

\begin{example}\label{nablaB}
Let us consider the Verma module $M(m,0,0,n+1)$. One can check that $\sum_{j=2}^5d_{1j}\otimes x_1^mx_j^*(x_{5}^*)^n$ is a singular vector
in $M(m,0,0,n+1)$, with leading term $d_{15}\otimes x_1^m(x_{5}^*)^{n+1}$. By Remark \ref{morphism} we can define a morphism of Verma modules $\nabla_B: M(m+1,0,0,n) \rightarrow M(m,0,0,n+1)$
by setting $\nabla_B(1\otimes s)=\sum_{j=2}^5d_{1j}\otimes x_1^mx_j^*(x_{5}^*)^n$. By Lemma \ref{esempi}, we have that $\nabla_B$ is associated to
$$\sum_{i<j}d_{ij}\otimes (x_i^*\partial_j-x_j^*\partial_i).$$
\end{example}

\begin{example} \label{nablaC} We shall now exhibit a singular vector in $M(0,0,m+1,n)$. To this aim it is convenient to think
of $F(0,0,m+1,n)$ as the dual $L_0$-module $F(n,m+1,0,0)^*$. We shall later investigate the role of duality between Verma modules in Section \ref{six}, where we will show, in particular, that the morphism we are going to construct can be seen in a certain sense as the dual of the morphism $\nabla_A$ defined in Example \ref{nablaA}.

Let us observe that the vector $\sum_{i<j}d_{ij}\otimes x_{ij}^*(x_{45}^*)^m(x_{5}^*)^n$ is a singular vector
in $M(F(n,m+1,0,0)^*)$ (with leading term $d_{45}\otimes (x_{45}^*)^{m+1}(x_{5}^*)^n$). Indeed, one immediately checks that $x_k\partial_{k+1}(\sum_{i<j}d_{ij}\otimes x_{ij}^*(x_{45}^*)^m(x_{5}^*)^n)=0$
for every $k=1,\dots, 4$. Besides, we have:
\begin{align*} x_5d_{45}&(\sum_{i<j}d_{ij}\otimes x_{ij}^*(x_{45}^*)^m(x_{5}^*)^n)\\
&= 
x_5\partial_3x_{12}^*(x_{45}^*)^m(x_5^*)^n-x_5\partial_2x_{13}^*(x_{45}^*)^m(x_5^*)^n+x_5\partial_1x_{23}^*(x_{45}^*)^m(x_5^*)^n\\
&=m(x_{45}^*)^{m-1}(x_5^*)^n(x_{12}^*x_{34}^*+x_{13}^*x_{42}^*+x_{14}^*x_{23}^*)
-n(x_{45}^*)^{m}(x_5^*)^{n-1}(x_{12}^*x_{3}^*+x_{23}^*x_{1}^*+x_{31}^*x_{2}^*)
&=0.
\end{align*}
Notice that, in fact,
\[ x_{ab}^*x_{cd}^*+x_{ac}^*x_{db}^*+x_{ad}^*x_{bc}^*=0\] 
 and
\[ x_{ab}^*x_{c}^*+x_{bc}^*x_{a}^*+x_{ca}^*x_{b}^*=0\] in $F(n,m+1,0,0)^*$
for all $a,b,c,d \in[5]$,
as one can check by applying these elements to the highest weight vector $x_1^nx_{12}^{m+1}$ in $F(n,m+1,0,0)$
and using the $L_0$-action.

  By Remark \ref{morphism} we can thus define a morphism of Verma modules $\nabla_C: M(0,0,m,n) \rightarrow M(F(n,m+1,0,0)^*)$
by setting $\nabla_C(1\otimes s)=\sum_{i<j}d_{ij}\otimes x_{ij}^*(x_{45}^*)^m(x_{5}^*)^n$. Once again, Lemma \ref{esempi} implies that the morphism $\nabla_C$ is associated to
$$\sum_{i<j}d_{ij}\otimes x_{ij}^*.$$
\end{example}
Examples \ref{nablaA}, \ref{nablaB} and \ref{nablaC} imply the following result.
\begin{proposition}
Let $m,n\geq 0$. Then $M(m,n,0,0)$, $M(m,0,0,n)$ and $M(0,0,m,n)$ are degenerate Verma modules.
\end{proposition}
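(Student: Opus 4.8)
The plan is to obtain this proposition as an immediate consequence of the equivalence established in Proposition \ref{morphism}: a minimal Verma module $M(\mu)$ is degenerate if and only if it contains a singular vector of positive degree. Hence it suffices to exhibit, for each module in the three families, such a singular vector, and this is exactly what Examples \ref{nablaA}, \ref{nablaB} and \ref{nablaC} provide. The work therefore reduces to checking that the parameter ranges covered by these three constructions exhaust the families in the statement, boundary cases included.

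First I would dispatch the family $M(m,n,0,0)$. Here Example \ref{nablaA} applies verbatim: for all $m,n\geq 0$ the element $d_{12}\otimes x_1^mx_{12}^n$ is a singular vector of degree one in $M(m,n,0,0)$, so every member of this family is degenerate with no restriction on the indices.

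Next, for $M(m,0,0,n)$ I would invoke Example \ref{nablaB}, which produces a degree-one singular vector in $M(m,0,0,k)$ for every $k\geq 1$; taking $k=n$ covers all $n\geq 1$. Analogously, for $M(0,0,m,n)$ I would invoke Example \ref{nablaC}: identifying $F(n,m+1,0,0)^*$ with $F(0,0,m+1,n)$ via $\lambda^*=(d,c,b,a)$, that example yields a singular vector in $M(0,0,k,n)$ for every $k\geq 1$, covering all $m\geq 1$ and all $n\geq 0$. One must keep track of the parameter shifts built into these two constructions (the target index is $n+1$ in Example \ref{nablaB} and $m+1$ in Example \ref{nablaC}), so that the relevant coordinate is always at least one.

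What remains are the boundary cases left uncovered above, namely $M(m,0,0,0)$ in the second family and $M(0,0,0,n)$ in the third. These are handled by the overlaps between the families: $M(m,0,0,0)$ is the $n=0$ instance of $M(m,n,0,0)$ and is degenerate by Example \ref{nablaA}, while $M(0,0,0,n)$ is the $m=0$ instance of $M(m,0,0,n)$, degenerate by Example \ref{nablaB} when $n\geq 1$ and by Example \ref{nablaA} (with $m=n=0$) when $n=0$. This exhausts all cases. There is no genuine analytic obstacle here, since the verification of the singular-vector conditions was already carried out in the three examples; the \emph{only} point requiring care is precisely this bookkeeping, because the three constructions carry different index shifts and meet along the coordinate hyperplanes, so one must confirm that the shifted ranges tile the claimed families with neither gaps nor appeal to any construction outside its stated range.
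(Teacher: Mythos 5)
Your proposal is correct and takes essentially the same route as the paper, which derives the proposition directly from Examples \ref{nablaA}, \ref{nablaB} and \ref{nablaC}: the singular vectors constructed there witness degeneracy via Proposition \ref{dege=sing}. Your explicit bookkeeping of the index shifts ($n+1$ in Example \ref{nablaB}, $m+1$ in Example \ref{nablaC}) and of the boundary cases $M(m,0,0,0)$ and $M(0,0,0,n)$, covered by the overlaps of the three families, is precisely the verification the paper leaves implicit.
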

Kac and Rudakov proposed the following conjecture \cite{KR}:
\begin{conjecture}\label{conjecture}
Let $a,b,c,d\geq 0$ be such that $M(a,b,c,d)$ is a degenerate Verma module. Then $a=b=0$ or $b=c=0$ or $c=d=0$.
\end{conjecture}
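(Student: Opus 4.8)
The plan is to prove the nontrivial (``only if'') direction, the converse having been settled by Examples \ref{nablaA}--\ref{nablaC}; so I would argue the contrapositive: if $(a,b,c,d)$ is \emph{not} of one of the three listed shapes, then $M(a,b,c,d)$ is irreducible. By Proposition \ref{morphism} this is equivalent to the absence of a singular vector, so I would fix a hypothetical singular vector $w\in M(\mu)$, $\mu=(a,b,c,d)$, which by the remarks following the definition may be taken to be a homogeneous $L_0$-weight vector of some degree $N>0$ (hence an $L_0$-highest weight vector), and extract from its existence constraints on $\mu$ strong enough to force one of the three shapes.

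The first reduction is to the leading term. By Proposition \ref{leading}, $\pi(w)\neq 0$ and determines $w$; writing $w=\sum_i u_i\otimes v_i$ in a weight basis of $(U_-)_N$, the leading term is $u_0\otimes v_\mu$, where $v_\mu$ spans the one-dimensional highest weight space $F(\mu)_\mu$ and $u_0\in (U_-)_N$ is a weight vector. Since $\pi$ preserves weights, the highest weight $\lambda$ of the source module $M(\lambda)$ satisfies $\lambda=\mu+\mathrm{wt}(u_0)$, so $u_0$ records exactly how source and target weights differ, and the problem becomes: determine which weight vectors $u_0\in (U_-)_N$ can occur as the leading term of a singular vector.

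Here the structural analysis of $U_-$ is decisive. Since $L_{-1}\cong \inlinewedge^2\C^5$ is odd with $\{d_{ij},d_{kl}\}=\varepsilon_{ijkl}\partial_{t_{ijkl}}$, while $L_{-2}\cong(\C^5)^*$ is even and central in $U_-$, the algebra $U_-$ is a Clifford-type algebra over the polynomial ring $\C[\partial_1,\dots,\partial_5]$; I would use the explicit PBW-type basis and the dominance-order description of Sections \ref{S5}--\ref{S6} to enumerate the candidate weight vectors $u_0$ of each weight and degree, and then impose the singular-vector conditions, namely annihilation by the simple raising operators $x_i\partial_{i+1}$ and by the lowest weight vector $x_5d_{45}$ of $L_1$. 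This yields a linear system whose solvability pins down $\mu$. To halve the casework I would invoke the duality of Remark \ref{lemdual2}: a positive-degree morphism $M(\lambda)\to M(\mu)$ produces a dual morphism $M(\mu^*)\to M(\lambda^*)$ of the same degree, and since $(a,b,c,d)^*=(d,c,b,a)$ interchanges the shapes $c=d=0$ and $a=b=0$ while fixing $b=c=0$, the three families are stable under duality and only one representative of each dual pair needs direct treatment.

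The main obstacle is uniformity in the degree $N$. For small $N$ the space of candidate leading terms $u_0$ of a given weight is small enough that the singular-vector equations can be solved by hand, and this is precisely what is carried out for $N=1,2,3$ in Sections \ref{S7}--\ref{nine}. As $N$ grows, however, the number of basis monomials of $(U_-)_N$ in a fixed weight grows without bound, and there is no evident a priori bound on the degree at which a singular vector could first appear in a non-special module. Overcoming this requires either an a priori degree bound (Rudakov's expectation that no morphism of degree exceeding $5$ exists) or a degree-independent obstruction distilled from the dominance-order behaviour of $U_-$; this is the genuinely hard step, and it is why one first establishes the conjecture in low degree and defers the general bound.
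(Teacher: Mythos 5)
There is a genuine gap, and it is the one you yourself name in your last paragraph: the statement you are proving is Conjecture \ref{conjecture}, which the paper does \emph{not} prove. The paper establishes only the degree-$\leq 3$ cases (Theorem \ref{gradouno} and Theorems \ref{teorema2}, \ref{teorema3}), i.e.\ that no morphism of degree one, two or three exists outside the three listed families. Your plan --- pass to a hypothetical singular vector via Proposition \ref{morphism}, reduce to its leading term via Proposition \ref{leading}, and solve the linear system imposed by the raising operators $x_i\partial_{i+1}$ and by $x_5d_{45}$ in the basis of Sections \ref{S5}--\ref{S6} --- is a faithful reconstruction of the paper's strategy, but it terminates exactly where the paper does. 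A singular vector witnessing degeneracy of $M(a,b,c,d)$ could a priori occur in any positive degree $N$, and without either an a priori bound on $N$ (Rudakov's expectation that no morphism of degree exceeding $5$ exists, itself unproven) or a degree-independent obstruction, the contrapositive you set out to prove is never closed. Conceding that ``this is the genuinely hard step'' and deferring it means the proposal is a programme, not a proof; no amount of casework for fixed small $N$ can establish irreducibility of $M(\mu)$, which requires excluding singular vectors in \emph{all} degrees simultaneously.

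A secondary, more technical problem: your use of duality to halve the casework at arbitrary degree is not available. The dual-morphism construction is only established in the paper for degree at most $3$ (Corollaries \ref{degree2dual} and \ref{degree3dual}, following from Proposition \ref{lemdual} and Remark \ref{lemdual2} applied to the explicit degree-$\leq 3$ equations); in general it is Conjecture \ref{conjdual}, which the authors state will follow from the conformal duality of Verma modules in a forthcoming paper. So at the degrees where your argument would need new input, the duality you invoke is itself conjectural within this paper. If you restrict your claims to degree at most three, everything you write matches the paper's actual results; as a proof of Conjecture \ref{conjecture} it is incomplete in an essential, and currently open, way.
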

By Proposition \ref{morphism} a possible strategy to prove Conjecture \ref{conjecture} is to construct all possible morphisms between minimal Verma modules. One of
the main results of this paper is a complete classification of such morphisms of degree at most 3.
\begin{example}\label{exampledeg2}
The following are nonzero morphisms of degree 2:
\begin{itemize}
\item $\nabla_B\nabla_A:M(m,1,0,0)\rightarrow M(m-1,0,0,1)$;
\item $\nabla_C\nabla_B:M(1,0,0,n)\rightarrow M(0,0,1,n+1)$;
\item $\nabla_C\nabla_A:M(0,1,0,0)\rightarrow M(0,0,1,0)$;
\end{itemize}
Indeed, 
\[
\nabla_B\nabla_A(1\otimes x_1^mx_{12})=\nabla_B(d_{12}\otimes x_1^m)=-m\sum_{j>1}d_{12}d_{1j}\otimes x_1^{m-1}x_j^*\neq 0
\]
\[
\nabla_C\nabla_B(1\otimes x_1(x_5^*)^n)=\sum_{j>1}\sum_{h<k} d_{1j}d_{hk}\otimes x_{hk}^*x_j^*(x_5^*)^n\neq 0 
\]
\[
\nabla_C\nabla_A (1\otimes x_{12})=\sum_{i<j}d_{12}d_{ij}\otimes x_{ij}^*\neq 0.
\]
We observe that the leading terms of these singular vectors are $d_{12}d_{15}\otimes x_1^{m-1}x_5^*$,  $d_{15}d_{45}\otimes x_{45}^*(x_5^*)^{n+1}$
and $d_{12}d_{45}\otimes x_{45}^*$, respectively.
(We also observe that the other compositions $\nabla_A\nabla_B$, $\nabla_A\nabla_C$, $\nabla_B\nabla_C$ are not defined). Moreover, one can also verify that $\nabla_A^2=\nabla_B^2=\nabla_C^2=0$ whenever they are defined: this will also be a consequence of the general treatment of morphisms of degree 2 in Section \ref{eight}.
\end{example}

\begin{example}
\[
\nabla_C\nabla_B\nabla_A:M(1,1,0,0)\rightarrow M(0,0,1,1)
\]
is a nonzero morphism of degree 3. We have that $\nabla_C\nabla_B\nabla_A(x_1x_{12})=\sum_{j>1, k<l} d_{12}d_{1j}d_{kl}\otimes x_{j}^*x_{kl}^*$
is a  singular vector in $M(0,0,1,1)$ with leading term $d_{12}d_{15}d_{45}\otimes x_{45}^*x_5^*$ .
\end{example}
We will prove that the morphisms described in this section are all possible morphisms between minimal Verma modules of degree at most 3.

\section{Structure of $U_-$}\label{S5}
In order to classify morphisms between generalized Verma modules of a given degree we need to  better understand the structure of $U_-$ as an $L_0$-module. The main result of this section is the construction of an explicit linear basis of $U_-$ which realizes its structure of $L_0$-module in a combinatorial way. 

We recall that $(U_-)_d$ denotes the homogeneous component of $U_-$ of degree $d$. 
We let 
\[\mathcal I_d=\{ I=(I_1,\ldots,I_d):\, I_l=(i_l,j_l) \textrm{ with  $ 1\leq i_l,j_l\leq 5$ for every $l=1,\ldots,d$}\}.
\]
If $I=(I_1,\ldots,I_d)\in \mathcal I_d$  we let $d_I=d_{I_1}\cdots d_{I_d}\in (U_-)_d$, with $d_{I_l}=d_{i_l j_l}$. 

We set 
$[5]^k=\{(t_1,\dots, t_k)~|~ t_i\in [5]\}$ and 
for $T=(t_1,\dots, t_k)\in [5]^k$ we let
$\partial_T=\partial_{t_1}\dots\partial_{t_k}$.


We have that $(U_-)_d$ is spanned by all elements of the form $d_I$ as $I$ varies in $\mathcal I_d$. One can also consider the following filtration of subspaces of $(U_-)_d$: for all $k\leq d/2$ we let 
\[
(U_-)_{d,k}=\textrm{Span}\{\partial_Td_I:\, T\in \mathbb [5]^k,\, I\in \mathcal I_{d-2k}\}.
\]
We have the following chain of inclusions
\[
(U_-)_d=(U_-)_{d,0}\supseteq (U_-)_{d,1}\supseteq (U_-)_{d,2}\supseteq\cdots.
\]
We observe that for all $k\leq d/2$ the subspace $(U_-)_{d,k}$ is also an $L_0$-submodule of $(U_-)_d$ and so we have the following isomorphism of $L_0$-modules
\[
(U_-)_d\cong \bigoplus_{k\leq d/2} (U_-)_{d,k}/(U_-)_{d,k+1},
\]
where we let $(U_-)_{d,k}=0$ if $k>d/2$. For example, we have
\[
(U_-)_5\cong \frac{(U_-)_{5,0}}{(U_-)_{5,1}}\oplus \frac{(U_-)_{5,1}}{(U_-)_{5,2}}\oplus (U_-)_{5,2}.
\]
Moreover, one can check that there is an isomorphism of $L_0$-modules $\psi:(U_-)_{d,k}/(U_-)_{d,k+1} \rightarrow \Sym^k({\mathbb C^5}^*)\otimes {\inlinewedge}^{d-2k}(\inlinewedge^2\mathbb C^5)$: this isomorphism is simply given by extending multiplicatively the following formulas
\[
\psi(\partial_i)=x_i^*,\,\,\psi(d_{ij})=x_{ij}.
\]
 and so we have that
\[
(U_-)_d\cong\bigoplus_{k< d/2} \Sym^k({\mathbb C^5}^*)\otimes \displaywedge^{d-2k}({\displaywedge}^2\mathbb C^5)
\]
as $L_0$-modules. The main goal of this section is to  explicitly  construct such isomorphism. 

We need some further technical notation. If $1\leq i,j\leq 5$ we let $\overline{(i,j)}=(j,i)$. There is a natural action of $B_d$, the Weyl group of type $B$ and rank $d$,  on $\mathcal I_d$ that can be described in the following way.
If $w=(\eta_1\sigma_1,\ldots,\eta_d \sigma_d)\in B_d$, where $\sigma=(\sigma_1,\ldots,\sigma_d)$ is a permutation of $[d]$ and $\eta_j=\pm 1$ for all $j\in [d]$, we let
\[
 w(I)=J
\]
where
\[
 J_j=\begin{cases}
      I_{\sigma_j}& \textrm{if $\eta_j=1$}\\ \overline{I_{\sigma_j}}& \textrm{if $\eta_j=-1$}.
     \end{cases}
\]
The fact that this is a $B_d$-action is an easy verification and is left to the reader.

We let $\mathcal S_d$ be the set of subsets of $[d]$ of cardinality 2, so that $|\mathcal S_d|=\binom{d}{2}$.

Note that elements in $\mathcal I_d$ are ordered tuples of ordered pairs, while elements in $\mathcal S_d$ are unordered tuples of unordered pairs.

If $\{k,l\}\in \mathcal S_d$ and $I\in \mathcal I_d$ we let $t_{I_k,I_l}=t_{i_k,j_k, i_l,j_l}$ and $
 \varepsilon_{I_k,I_l}=\varepsilon_{i_k,j_k, i_l,j_l}
$ (see Section \ref{S1}).

Note that the definitions of $t_{I_k,I_l}$ and $ \varepsilon_{I_k,I_l}$ do not depend on the order of $k$ and $l$ but only on the set $\{k,l\}$. We also let
\[
 D_{\{k,l\}}(I)=\frac{1}{2}  (-1)^{l+k}  \varepsilon_{I_k,I_l}\partial_{t_{I_k,I_l}}\in (U_-)_2.                                                                                           \]
 For example, if $I=((1,2),(2,3),(3,5))\in \mathcal I_3$ then $D_{\{1,3\}}(I)=\frac{1}{2} (-1)^4 \varepsilon_{12354}\partial_4=-\frac{1}{2}\partial_4$.
 \begin{definition}
 A subset $S$ of $\mathcal S_d$ is \emph{self-intersection free} if its elements are pairwise disjoint.\end{definition}
 For example $S=\{\{1,3\},\{2,5\}, \{4,7\}\}$ is self-intersection free while $\{\{1,3\},\{2,5\}, \{3,7\}\}$ is not. We denote by $\textrm{SIF}_d$ the set of self-intersection free subsets of $\mathcal S_d$.
\begin{definition}
Let $\{k,l\}, \{h,m\}\in \mathcal S_d$ be disjoint. We say that $\{k,l\}$ and $\{h,m\}$ \emph{cross} if exactly one element in $\{k,l\}$ is between $h$ and $m$. If $S\in \textrm{SIF}_d$ we let the crossing number $c(S)$ of $S$ be the number of pairs of elements in $S$ that cross.
\end{definition}
For example, if $S=\{\{1,3\},\{2,5\}, \{4,7\}\}$ then $\{1,3\}$ and $\{2,5\}$ cross, $\{1,3\}$ and $\{4,7\}$ do not cross, and $\{2,5\}$ and $\{4,7\}$ cross, so the crossing number of $S$ is $c(S)=2$ (see Figure \ref{figcross} for a graphical interpretation).
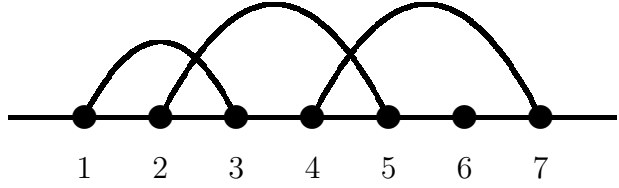
\begin{figure}
 \setlength{\unitlength}{1cm}
\begin{center}
\begin{picture}(8,6)
\thicklines

\linethickness{0.5mm}
\put(0,1){\line(1,0){8}}
\put(1,1){\circle*{0.3}}
\put(2,1){\circle*{0.3}}
\put(3,1){\circle*{0.3}}
\put(4,1){\circle*{0.3}}
\put(5,1){\circle*{0.3}}
\put(6,1){\circle*{0.3}}
\put(7,1){\circle*{0.3}}
\put(0.9,0.2){$1$}
\put(1.9,0.2){$2$}
\put(2.9,0.2){$3$}
\put(3.9,0.2){$4$}
\put(4.9,0.2){$5$}
\put(5.9,0.2){$6$}
\put(6.9,0.2){$7$}
\qbezier(1,1)(2,3)(3,1)
\qbezier(2,1)(3.5,4)(5,1)
\qbezier(4,1)(5.5,4)(7,1)

\end{picture}
\end{center}
\caption{A graphical interpretation of the crossing number}\label{figcross}
\end{figure}
\begin{definition}
 Let $S=\{S_1,\ldots,S_r\}\in \textrm{SIF}_d$. We let
 \[
  D_S(I)=\prod_{j=1}^rD_{S_j}(I)\in (U_-)_{2r}
 \]
if $r\geq 2$ and $D_{\emptyset}(I)=1$ (note that the order of multiplication is irrelevant as the elements $D_{S_j}(I)$ commute among themselves).
\end{definition}
\begin{definition}
 For $I=(I_1,\ldots,I_d)\in \mathcal I_d$ and $S=\{S_1,\ldots,S_r\}\in \textrm{SIF}_d$ we let $C_S(I)\in \mathcal I_{d-2r}$ be obtained from $I$ by removing all $I_j$ such that $j\in S_k$ for some $k\in [r]$. 
\end{definition}
For example, if $d=7$ and $S=\{\{1,4\},\{2,7\}\}$ then $C_S(I)=(I_3,I_5,I_6)$.
We are now ready to give the main definition of this section.

\begin{definition}\label{defomega}
 For all $I\in \mathcal I_d$ we let
 \[
  \omega_I=\sum_{S\in \textrm{SIF}_d}(-1)^{c(S)}D_S(I)\,d_{C_S(I)}\in (U_-)_d.
 \]

\end{definition}

For example, if $I=(21,13,45,25)\in \mathcal I_4$ we have
\begin{itemize}
 \item $D_{\emptyset}(I)=1$;
 \item $D_{\{1,3\}}(I)=-\frac{1}{2}\partial_3$;
 \item $D_{\{2,3\}}(I)= +\frac{1}{2}\partial_2$;
 \item $D_{\{2,4\}}(I)=+\frac{1}{2}\partial_4$;
 \item $D_{\{1,3\},\{2,4\}}(I)=D_{\{1,3\}}(I)D_{\{2,4\}}(I)=-\frac{1}{4}\partial_3\partial_4$
\end{itemize}
and all other $D_S(I)$ vanish. We also have, $c(\{\{1,3\},\{2,4\}\})=1$ so
\[
 \omega_I=d_I-\frac{1}{2}\partial_3d_{13}d_{25}+\frac{1}{2}\partial_2d_{21}d_{25}+\frac{1}{2}\partial_4 d_{21}d_{45}+\frac{1}{4}\partial_3\partial_4.
\]

\begin{proposition}
 For all $I\in \mathcal I_d$ and all $g\in B_d$ we have
 \[
  \omega_{g(I)}=(-1)^{\ell(g)}\omega_I,
 \]
where $\ell(g)$ is the length of $g$ with respect to the Coxeter generators $\{s_0,s_1,s_2,\ldots,s_{d-1}\}$, with $s_0=(-1,2,3,\ldots,d)$ and $s_1,\ldots,s_{d-1}$  the usual simple transpositions.
\end{proposition}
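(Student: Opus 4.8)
The plan is to exploit that $g\mapsto (-1)^{\ell(g)}$ is the sign character of the Coxeter group $B_d$, i.e. the unique homomorphism $B_d\to\{\pm 1\}$ sending every Coxeter generator to $-1$. Since $I\mapsto \omega_I$ is defined on all of $\mathcal I_d$ and $B_d$ acts on $\mathcal I_d$, it suffices to prove the single-generator identity $\omega_{s(I)}=-\omega_I$ for every $I\in\mathcal I_d$ and every generator $s\in\{s_0,s_1,\ldots,s_{d-1}\}$. Granting this, if $g=s_{i_1}\cdots s_{i_\ell}$ is a reduced word (so $\ell=\ell(g)$), then by associativity of the action and repeated application of the single-generator identity, $\omega_{g(I)}=\omega_{s_{i_1}(s_{i_2}\cdots s_{i_\ell}(I))}=-\omega_{s_{i_2}\cdots s_{i_\ell}(I)}=\cdots=(-1)^{\ell(g)}\omega_I$.

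The generator $s_0$ is easy and handled termwise. Writing $J=s_0(I)$, one has $J_1=\overline{I_1}$ and $J_k=I_k$ for $k>1$. For a fixed $S\in\textrm{SIF}_d$ I would distinguish whether position $1$ is matched by $S$. If it is unmatched, then $D_S(J)=D_S(I)$ and $c(S)$ is unchanged, while $d_{C_S(J)}=-d_{C_S(I)}$ because the single factor $d_{\overline{I_1}}=-d_{I_1}$ changes sign; if $1$ lies in a block $\{1,m\}\in S$, then $d_{C_S(J)}=d_{C_S(I)}$ while $D_{\{1,m\}}(J)=-D_{\{1,m\}}(I)$, since swapping the two entries of $I_1$ flips $\varepsilon_{I_1,I_m}$ and leaves $t_{I_1,I_m}$ unchanged. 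In either case the term changes sign, so $\omega_{s_0(I)}=-\omega_I$.

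The heart of the argument is the transposition generator $s_i$ ($1\le i\le d-1$), for which I set $\tau=(i,i+1)$ and $J=s_i(I)$, so $J_k=I_{\tau(k)}$. The decisive local relation is the super-anticommutation in $U_-$, $d_{I_{i+1}}d_{I_i}=-d_{I_i}d_{I_{i+1}}+\varepsilon_{I_i,I_{i+1}}\partial_{t_{I_i,I_{i+1}}}$, together with the bookkeeping identity $\varepsilon_{I_i,I_{i+1}}\partial_{t_{I_i,I_{i+1}}}=-2\,D_{\{i,i+1\}}(I)$. I would compare the $S$-term of $\omega_J$ with the $\tau(S)$-term of $\omega_I$, classifying $S$ according to how $i$ and $i+1$ occur: (a) both unmatched; (b) exactly one matched, to some $m\notin\{i,i+1\}$; (c) $\{i,i+1\}\in S$; (d) $i$ and $i+1$ matched to distinct elements $m\ne m'$. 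A routine check shows $D_S(J)=(-1)^{e}D_{\tau(S)}(I)$, where $e$ is the number of blocks of $S$ meeting $\{i,i+1\}$ in exactly one point, and that $\tau$ changes neither $C_S$ nor $c(S)$ in cases (a),(b),(c), while in case (d) it fixes $C_S$ but flips the parity of $c(S)$. The last point is the one genuinely geometric step: crossings of the two moving chords with any fixed chord are preserved, so $c(S)-c(\tau(S))$ equals the difference of the two internal crossing indicators, which a short case analysis on the relative position of $m,m'$ shows to be odd. Combining these, cases (b) and (d) contribute exactly $-1$ times the matching $\tau(S)$-terms of $\omega_I$; case (c), for which $\tau(S)=S$, contributes $+1$ times the $S$-term, i.e. the wrong sign; and case (a), after rewriting $d_{C_S(J)}=-d_{C_S(I)}+\varepsilon_{I_i,I_{i+1}}\partial_{t_{I_i,I_{i+1}}}\,d_{C_S(I)\setminus\{i,i+1\}}$ via the anticommutation, contributes $-1$ times the $S$-term plus a correction supported on $\partial_{t_{I_i,I_{i+1}}}$.

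It then remains to show that the correction terms from case (a) cancel the doubled, wrong-signed contribution from case (c). For this I would use the bijection $S\mapsto \{i,i+1\}\cup S$ between $\textrm{SIF}_d$-sets with $i,i+1$ both unmatched (case (a)) and those containing the block $\{i,i+1\}$ (case (c)); under it $c$ is preserved and $D_{\{i,i+1\}\cup S}(I)=D_{\{i,i+1\}}(I)D_S(I)$, while $d_{C_S(I)\setminus\{i,i+1\}}=d_{C_{\{i,i+1\}\cup S}(I)}$. Substituting $\varepsilon_{I_i,I_{i+1}}\partial_{t_{I_i,I_{i+1}}}=-2D_{\{i,i+1\}}(I)$ turns the total case-(a) correction into exactly $-2$ times the sum of case-(c) terms, killing the $2$ times the case-(c) terms produced above. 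Hence $\omega_{s_i(I)}=-\omega_I$, completing the generator check and, with it, the proposition. I expect the main obstacle to be precisely this interlocking cancellation — ensuring that the bracket correction arising when two odd factors are transposed is matched term-for-term by the contraction terms $D_{\{i,i+1\}}$ — and, on the combinatorial side, the parity computation of the crossing number in case (d).
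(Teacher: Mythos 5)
Your proposal is correct and takes essentially the same route as the paper's proof: reduction to the Coxeter generators via the sign character, a termwise sign count for $s_0$, and, for each adjacent transposition, the same case analysis in which the anticommutator correction $\varepsilon_{I_h,I_{h+1}}\partial_{t_{I_h,I_{h+1}}}=-2D_{\{h,h+1\}}(I)$ arising from the case where $h,h+1$ are unmatched cancels the wrong-signed terms with $\{h,h+1\}\in S$ via the pairing $S\mapsto \tilde S=S\cup\{\{h,h+1\}\}$, together with the crossing-parity flip occurring exactly when $h$ and $h+1$ lie in two distinct blocks of $S$. Your cases (b) and (d) are exactly the paper's single sum over those $S$ meeting $\{h,h+1\}$ without containing it as a block, so the two arguments coincide in substance.
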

\begin{proof}
 It is enough to verify the statement for $g\in \{s_0,\ldots,s_{d-1}\}$. If $g=s_0$ we have, for all $k,l$, $1\leq k,l\leq d$:
 \begin{itemize}
  \item $\varepsilon_{s_0(I)_k,s_0(I)_l}=(-1)^{\chi_{1\in \{k,l\}}}\varepsilon_{I_k,I_l}$;
  \item $t_{s_0(I)_k,s_0(I)_l}=t_{I_k,I_l}$;
 \end{itemize}
 hence $D_S(s_0(I))=(-1)^{\chi_{1\in S}}D_S(I)$ while $d_{\mathcal C_{S}(s_0(I))}=(-1)^{\chi_{1\notin S}}d_{\mathcal C_{S}(I)}$, and
therefore we have
   \begin{align*} \omega_{s_0(I)}&=  \sum_{S\in SIF_d}(-1)^{c(S)}D_S(s_0(I))d_{C_S(s_0(I))}\\
 &= \sum_{S\in SIF_d}(-1)^{c(S)} (-1)^{\chi_{1\in S}}D_S(I) (-1)^{\chi_{1\notin S}}d_{\mathcal C_{S}}(I)\\
 &=-\omega_I.
  \end{align*}
  Now let $h\in \{1,\ldots,d-1\}$ and, for notational convenience, let $\sigma=s_h$. We have:
  \begin{itemize}
   \item $\varepsilon_{{\sigma}(I)_k,\sigma(I)_l}=\varepsilon_{I_{\sigma(k)},I_{\sigma(l)}}$;
   \item $t_{{\sigma}(I)_k,\sigma(I)_l}=t_{I_{\sigma(k)},I_{\sigma(l)}}$;
   \item $(-1)^{k+l}=(-1)^{{\sigma}(k)+{\sigma}(l)+\chi_{h\in \{k,l\}}+\chi_{h+1\in \{k,l\}}}$
   \end{itemize}
   hence $D_S({\sigma}(I))=(-1)^{\chi_{h\in S}+\chi_{h+1\in S}}D_{{\sigma}(S)}(I)$, where ``$h\in S$'' means that $h$ belongs to some element of $S$. We also observe that
   \[
    (-1)^{c(S)}=(-1)^{c({\sigma}(S))}(-1)^{\chi_{h\in S,\,h+1\in S,\,\{h,h+1\}\notin S}}
   \]
i.e. the parity of the crossing number of $S$ is opposite to the parity of the crossing number of ${\sigma}(S)$ precisely if $h$ and $h+1$ belong to two distinct elements of $S$.
   Moreover we observe that $d_{C_S({\sigma}(I))}=d_{C_{{\sigma}(S)}(I)}$ if $h$ or $h+1$ belong to $S$. If $h,h+1$ do not belong to $S$ we have
   \[
    d_{C_S({\sigma}(I))}=-d_{C_{{\sigma}(S)}(I)}-2D_{\{h,h+1\}}(I)d_{C_{{\sigma}(\tilde S)}(I)}
   \]
   where $\tilde S$ is obtained from $S$ by adding the pair $\{h,h+1\}$. We are now ready to compute $\omega_{{\sigma}(I)}$. We have
   \begin{align*}
   \omega_{{\sigma}(I)}&=\sum_{S\in SIF_d}(-1)^{c(S)}D_S({\sigma}(I))d_{C_S({\sigma}(I))}\\
   &= \sum_{S\ni h\,or\, S\ni h+1 \,but\, S\not \ni \{h,h+1\}} (-1)^{c(S)}D_S({\sigma}(I))d_{C_S({\sigma}(I))}\\
   &\hspace{3mm}+\sum_{S \not \ni h,h+1}\Big((-1)^{c(S)}D_S({\sigma}(I))d_{C_S(s_0(I))}+(-1)^{c(\tilde S)}D_{\tilde S}({\sigma}(I))d_{C_ {\tilde S}(\sigma(I))}\Big)\\
   &= \sum_{S\ni h\,or\, S\ni h+1 \,but\, S\not \ni \{h,h+1\}} (-1)^{\chi_{h\in S,\,h+1 \in S}}(-1)^{c({\sigma}(S))}(-1)^{\chi_{h\in S}+\chi_{h+1\in S}} D_{{\sigma}(S)}(I)d_{C_{{\sigma}(S)}(I)}\\
   &\hspace{3mm}+ \sum_{S \not \ni h,h+1}\Big( (-1)^{c({\sigma}(S))}D_{{\sigma}(S)}(I)(-d_{C_{{\sigma}(S)}(I)}-2D_{\{h,h+1\}}(I)d_{C_{{\sigma}(\tilde S)}(I)})+(-1)^{c({\sigma}(\tilde S))}D_{{\sigma}(\tilde S)}(I)d_{C_{\sigma(\tilde S)}(I)}\Big)\\
   &=-\sum_{S\ni h\,or\, S\ni h+1 \,but\, S\not \ni \{h,h+1\}} (-1)^{c({\sigma}(S))} D_{{\sigma}(S)}(I)d_{C_{{\sigma}(S)}(I)}-\sum_{S \not \ni h,h+1}(-1)^{c({\sigma}(S))}D_{{\sigma}(S)}(I)d_{C_{{\sigma}(S)}(I)}\\
   &\hspace{3mm}+\sum  _{S \not \ni h,h+1}\Big( (-2)(-1)^{c({\sigma}(S))}D_{{\sigma}(S)}(I)D_{\{h,h+1\}}(I)d_{C_{{\sigma}(\tilde S)}(I)})+(-1)^{c({\sigma}(\tilde S))}D_{{\sigma}(\tilde S)}(I)d_{C_{\sigma(\tilde S)}(I)}\Big)\\
   &=-\sum_{S\in SIF_d}(-1)^{c(\sigma(S))}D_{\sigma(S)}(I)\, d_{C_{\sigma(S)}(I)}\\
   &=-\omega_I
\end{align*}
 where we used that $D_{{\sigma}(S)}(I)D_{h,h+1}(I)=D_{\sigma(\tilde S)}(I)$ and $(-1)^{c({\sigma}(S))}=(-1)^{c({\sigma}(\tilde S))}$.
\end{proof}
\begin{corollary}
 If $I=(I_1,\ldots,I_d)$ is such that $I_j= I_k$ for some $j<k$, then $\omega_I=0$; if $I_j=\overline I_k$ for some $j\leq k$, then $\omega_I=0$.
\end{corollary}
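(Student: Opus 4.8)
The engine for this corollary is the transformation law $\omega_{g(I)}=(-1)^{\ell(g)}\omega_I$ established in the preceding proposition. The plan is, in each of the stated degenerate situations, to produce an element $g\in B_d$ that \emph{stabilizes} $I$ under the $B_d$-action and yet has \emph{odd} length. For such a $g$ the transformation law reads $\omega_I=\omega_{g(I)}=-\omega_I$, which forces $\omega_I=0$.

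To decide the parity of $\ell(g)$ I would avoid computing lengths explicitly and instead use that $g\mapsto(-1)^{\ell(g)}$ is the sign character of the Coxeter group $B_d$, hence a homomorphism $B_d\to\{\pm1\}$ sending every Coxeter generator $s_0,\ldots,s_{d-1}$ to $-1$. Writing a signed permutation $g$ via its underlying permutation $\sigma\in S_d$ and its sign vector $(\eta_1,\ldots,\eta_d)$, I would check that $\operatorname{sgn}(\sigma)$ and $\prod_i\eta_i$ are each homomorphisms $B_d\to\{\pm1\}$ and that their product agrees with $(-1)^{\ell(\cdot)}$ on the generators; by uniqueness of a homomorphism prescribed on generators this yields the closed formula
\[
(-1)^{\ell(g)}=\operatorname{sgn}(\sigma)\prod_{i=1}^d\eta_i.
\]

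It then remains to exhibit the stabilizing elements and read off their parity. If $I_j=I_k$ with $j<k$, I take $g$ to be the transposition $(j\,k)$ with all signs $+1$; since $I_j=I_k$ one has $g(I)=I$, while $\operatorname{sgn}(\sigma)=-1$ and $\prod\eta_i=1$, so $(-1)^{\ell(g)}=-1$. If $I_j=\overline{I_k}$ with $j<k$, I take $g$ to be the transposition $(j\,k)$ together with the sign changes $\eta_j=\eta_k=-1$ (all other signs $+1$); using the definition of the action and $\overline{\overline{I_k}}=I_k$ one checks $J_j=\overline{I_k}=I_j$ and $J_k=\overline{I_j}=I_k$, so $g(I)=I$, while $\operatorname{sgn}(\sigma)\prod\eta_i=(-1)\cdot1=-1$. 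Finally, in the case $j=k$, where $I_j=\overline{I_j}$ means $I_j=(i,i)$, I take $g$ to be the single sign change $\eta_j=-1$ with $\sigma=\mathrm{id}$; then $g(I)=I$ and $\operatorname{sgn}(\sigma)\prod\eta_i=1\cdot(-1)=-1$. In every case $g$ stabilizes $I$ and has odd length, whence $\omega_I=0$.

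The only genuinely delicate point is the parity identity; once the formula $(-1)^{\ell(g)}=\operatorname{sgn}(\sigma)\prod_i\eta_i$ is in hand, everything else is an immediate verification directly from the definition of the $B_d$-action on $\mathcal I_d$. I expect the main obstacle to be phrasing this identity cleanly, in particular checking that $\prod_i\eta_i$ really is multiplicative under the signed composition of $B_d$; this follows from its interpretation as the determinant of the signed permutation matrix divided by $\operatorname{sgn}(\sigma)$, both factors being homomorphisms.
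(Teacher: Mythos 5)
Your proof is correct and is essentially the argument the paper intends: the corollary is stated without proof precisely because it follows at once from the proposition $\omega_{g(I)}=(-1)^{\ell(g)}\omega_I$ by exhibiting a stabilizer of $I$ of odd length, exactly as you do in all three cases (including the boundary case $j=k$, where $I_j=\overline{I_j}$ forces $I_j=(i,i)$). Your verification of the parity formula $(-1)^{\ell(g)}=\operatorname{sgn}(\sigma)\prod_i\eta_i$ via the sign character of $B_d$ is the standard justification and is sound.
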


Now we want to study the action of $L_0$ on the elements $\omega_I$. 
If $I=(I_1,\ldots,I_d)$ and $r$ appears once in $I_b$ for some $b$ we let $I^{b,s,r}$ be the sequence obtained from $I$ by substituting the letter $r$ in $I_b$ by $s$. We want to prove the following
\begin{theorem}\label{act} Let $I\in \mathcal I_d$ and $r,s\in [5]$, $r\neq s$.
Assume that the letter $r$ appears in $I_1,\ldots,I_c$, once in each pair,  and does not appear in $I_{c+1},\ldots,I_{d}$. Then
\[
 x_s\partial _r. \omega_I=\sum_{b=1}^c \omega_{I^{b,s,r}}.\]
\end{theorem}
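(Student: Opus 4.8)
The plan is to use that the adjoint action of $x_s\partial_r\in L_0$ on the associative algebra $U_-$ is a derivation, to expand the action on each summand $D_S(I)\,d_{C_S(I)}$ of $\omega_I$ via the Leibniz rule, and then to match the result termwise against $\sum_{b=1}^c\omega_{I^{b,s,r}}$. Since $r\neq s$, the element $x_s\partial_r$ is an off-diagonal root vector of $L_0\cong\frak{sl}_5$, and from the bracket of Section \ref{S1} (equivalently, from the Lie derivative) I would first record the two elementary action formulas
\[
x_s\partial_r.d_{ij}=\delta_{ir}d_{sj}+\delta_{jr}d_{is},\qquad x_s\partial_r.\partial_i=-\delta_{is}\partial_r.
\]
The first simply replaces the index $r$ by $s$ wherever $r$ occurs in the pair $(i,j)$; in particular, if $r$ occurs once in $I_b$, then $x_s\partial_r.d_{I_b}=d_{(I_b)^{s,r}}$, where $(I_b)^{s,r}$ is $I_b$ with $r$ replaced by $s$.

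Writing $\omega_I=\sum_{S\in\textrm{SIF}_d}(-1)^{c(S)}D_S(I)\,d_{C_S(I)}$ and applying the Leibniz rule, I would split $x_s\partial_r.\omega_I$ into a \emph{$d$-part}, where the derivation hits $d_{C_S(I)}$, and a \emph{$\partial$-part}, where it hits $D_S(I)$. For the $d$-part, note that $D_S(I)$ and $c(S)$ depend only on the pairs $I_m$ with $m\in\bigcup S$, while $d_{C_S(I)}$ is built from the $I_m$ with $m\notin\bigcup S$; since $r$ occurs exactly in $I_1,\dots,I_c$, the derivation produces, for each $b\in[c]$ with $b\notin\bigcup S$, the term $(-1)^{c(S)}D_S(I^{b,s,r})\,d_{C_S(I^{b,s,r})}$, which is precisely the $S$-summand of $\omega_{I^{b,s,r}}$. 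Hence the $d$-part equals $\sum_{b=1}^c\sum_{S:\,b\notin\bigcup S}$ of the $S$-summands of $\omega_{I^{b,s,r}}$, and it remains to show that the $\partial$-part supplies exactly the complementary summands, namely
\[
\sum_S(-1)^{c(S)}\big(x_s\partial_r.D_S(I)\big)\,d_{C_S(I)}=\sum_{b=1}^c\sum_{S:\,b\in\bigcup S}(-1)^{c(S)}D_S(I^{b,s,r})\,d_{C_S(I^{b,s,r})}.
\]

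Since $C_S$ deletes every $I_m$ with $m\in\bigcup S$, on the right-hand side one has $d_{C_S(I^{b,s,r})}=d_{C_S(I)}$, and replacing $r$ by $s$ in $I_b$ affects only the single pair of $S$ containing $b$. Expanding $D_S$ as a product over the pairs of $S$ and using that the factors not containing $b$ are unchanged, the identity reduces to the purely local statement, for each pair $\{a,b'\}\in\mathcal S_d$,
\[
x_s\partial_r.D_{\{a,b'\}}(I)=\sum_{j\in\{a,b'\}:\,r\in I_j}D_{\{a,b'\}}(I^{j,s,r}).
\]
This I would verify directly from $D_{\{a,b'\}}(I)=\tfrac12(-1)^{a+b'}\varepsilon_{I_a,I_{b'}}\partial_{t_{I_a,I_{b'}}}$ and $x_s\partial_r.\partial_t=-\delta_{ts}\partial_r$. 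The left-hand side is nonzero only when $I_a,I_{b'}$ involve four distinct indices and $s$ is the complementary fifth index, in which case substituting $r\mapsto s$ in the pair containing $r$ turns $t_{I_a,I_{b'}}=s$ into $r$; the needed sign $\varepsilon_{I^{j,s,r}}=-\varepsilon_I$ follows because interchanging $r$ and $s$ in the defining $5$-tuple is a single transposition. The degenerate configurations check out as well: if $s$ already occurs among the four indices both sides vanish, and if $r$ lies in both $I_a$ and $I_{b'}$ the two right-hand summands cancel by the same transposition sign. The main obstacle is exactly this sign bookkeeping---tracking the $(-1)^{a+b'}$ prefactors, the behavior of $\varepsilon$ under index substitution, and all the degenerate cases where pairs share indices---but once the reduction to the local identity is in place, each case is an elementary parity computation.
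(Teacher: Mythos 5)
Your proof is correct and follows essentially the same route as the paper's: both expand $x_s\partial_r.\omega_I$ by the Leibniz rule, match the terms where the derivation hits $d_{C_S(I)}$ with the summands of $\sum_b\omega_{I^{b,s,r}}$ having $b\notin\bigcup S$, and handle the remaining terms by the sign computation $\varepsilon_{(I^{b})_b,(I^{b})_l}=-\varepsilon_{I_b,I_l}$ together with the cancellation $D_S(I^{b})=-D_S(I^{b'})$ when $b,b'\leq c$ lie in the same pair of $S$. Your only (cosmetic) difference is packaging the $\partial$-part into a single local identity $x_s\partial_r.D_{\{a,b'\}}(I)=\sum_{j:\,r\in I_j}D_{\{a,b'\}}(I^{j,s,r})$ verified pairwise, where the paper instead organizes the same computation globally over straddling pairs $k\leq c<l$.
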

\begin{proof} For notational convenience, since $r$ and $s$ are fixed in this proof, we simply let $I^b=I^{b,s,r}$ for all $1\leq b\leq c$.
 We start by calculating the left-hand side. We have
 \begin{align*}
  x_s\partial _r. \omega_I&= x_s \partial _r. \sum_S(-1)^{c(S)} D_S(I) d_{C_S(I)}.
  \end{align*}
  Now we observe that $x_s\partial_r. D_{\{k,l\}}(I)$ is non zero if and only if $I_k$ and $I_l$ have the four indices distinct from $s$, hence $k$ and $l$ cannot be both less than or equal to $c$
	or both strictly greater than $c$. We then assume that $k\leq c$ and $l>c$; in this case we have 
  \[
  x_s\partial_r. D_{\{k,l\}}(S)=x_s \partial _r. \Big(\frac{1}{2}(-1)^{k+l}\varepsilon_{I_k,I_l}\partial_{t_{I_k,I_l}}\Big)=\frac{1}{2}(-1)^{k+l+1}\varepsilon_{I_k,I_l}\partial_r.
  \]
So we have
  \begin{align*}
 x_s\partial _r \omega_I=& \sum_{k\leq c<l,\,s\notin I_k,\,s\notin I_l}\frac{1}{2}(-1)^{k+l+1}\varepsilon_{I_k,I_l}\partial_r\sum_{S\not \ni k,l}(-1)^{c(S)}D_S(I)d_{C_{S\cup \{k,l\}}(I)}+\\
&+\sum_S (-1)^{c(S)}D_S(I) \sum_{b\leq c,\,b\notin S}d_{C_S(I^b)}.
 \end{align*}
 
 Now we compute the right-hand side:
 \begin{align*}
 \sum_{b\leq c} \omega_{I^{b}}=\sum_{b\leq c} \sum_{S}(-1)^{c(S)}D_S(I^b)d_{C_S(I^b)}.
 \end{align*}
Now we observe that if $b\notin S$ we have $D_S(I^b)=D_S(I)$ and so we reduce to prove the following:
\[
 \sum_{k\leq c<l,\,s\notin I_k,\,s\notin I_l}\frac{1}{2}(-1)^{k+l+1}\varepsilon_{I_k,I_l}\partial_r\sum_{S\not \ni k,l}(-1)^{c(S)}D_S(I)d_{C_{S\cup \{k,l\}}(I)}=\sum_{S,b: b\leq c,\,b\in S} (-1)^{c(S)}D_S(I^b)d_{C_S(I^b)}
\]
We notice that if $\{b,b'\}\in S$ with both $b,b'\leq c$ then $D_S(I^b)=-D_S(I^{b'})$ hence we reduce to prove that
\begin{align*}
 \sum_{k\leq c<l,\,s\notin I_k,\,s\notin I_l}\frac{1}{2}(-1)^{k+l+1}\varepsilon_{I_k,I_l}\partial_r\sum_{S\not \ni k,l}&(-1)^{c(S)}D_S(I)d_{C_{S\cup \{k,l\}}(I)}
=\\
&=\sum_{b\leq c<l}\sum_ {S:\, S\not\ni b,l} (-1)^{c(S)}D_{\{b,l\}}(I^b)D_S(I^b)d_{C_{S\cup\{b,l\}}(I^b)}.
\end{align*}

Finally, in order to prove this last equation we observe that if $b\leq c<l$ then  $D_{\{b,l\}}(I^b)$ is nonzero only if $s\notin I_b,I_l$, that in this case $\varepsilon_{(I^b)_b,(I^b)_l}=-\varepsilon_{I_b,I_l}$, that $D_{\{b,l\}}(I^b)= -\frac{1}{2}(-1)^{b+l}\varepsilon_{I_b,I_l}\partial _r$ and that $d_{\mathcal C_{S\cup \{b,l\}}(I^{b})}=d_{\mathcal C_{S\cup \{b,l\}}(I)}$.
The proof is complete.
\end{proof}

\medskip

If $I=(I_1,\ldots,I_d)$ with $I_k=(i_k,j_k)$ we let 
\[
 D_{s\rightarrow r}(\omega _I)=\delta_{r,i_{1}}\omega_{((s,j_{1}),I_2,\ldots,I_d)}+\delta_{r,j_{1}}\omega_{((i_{1},s),I_2,\ldots,I_d)}+\delta_{r,i_{2}}\omega_{(I_1,(s,j_{2}),I_3,\ldots,I_d)}+\cdots+\delta_{r,j_{d}}\omega_{(I_1,\ldots,I_{d-1},(i_{d},s))}.
\]

\begin{corollary}\label{omegaction}
 Let $I=(I_1,\ldots,I_d)$ be arbitrary. Then
 \[
  x_s\partial_r \omega_I=  D_{s\rightarrow r}(\omega _I).
 \]
\end{corollary}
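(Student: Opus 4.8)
The plan is to derive the Corollary from Theorem \ref{act} by a symmetrization argument, using the transformation rule $\omega_{g(I)}=(-1)^{\ell(g)}\omega_I$ to remove the two hypotheses present in the Theorem but absent here: that the pairs containing $r$ occupy the initial positions $I_1,\dots,I_c$, and that $r$ occurs at most once in each pair. Both sides of the asserted identity are built additively from the occurrences of $r$ in $I$, so the whole point is to match them occurrence by occurrence after normalizing $I$.

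First I would dispose of the degenerate case in which some pair equals $(r,r)$. If $I_k=(r,r)$ then $I_k=\overline{I_k}$, so $\omega_I=0$ and the left-hand side vanishes. For the right-hand side, I group the summands of $D_{s\rightarrow r}(\omega_I)$ by the pair they modify: any summand modifying a pair other than $I_k$ still contains $I_k=(r,r)$ and hence vanishes, while the two summands modifying the two slots of $I_k$ are $\omega_{(\ldots,(s,r),\ldots)}$ and $\omega_{(\ldots,(r,s),\ldots)}=-\omega_{(\ldots,(s,r),\ldots)}$ and cancel, since flipping a single pair reverses the sign of $\omega$ (a sign change is an element of odd length in $B_d$). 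Thus both sides are $0$, and from now on I may assume that no pair of $I$ equals $(r,r)$, so that $r$ occurs at most once in each pair.

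The key point is then that $D_{s\rightarrow r}$ is compatible with the $B_d$-action in the same twisted way as $\omega$: for every $\sigma\in B_d$ that is a genuine permutation (all $\eta_j=+1$) one has $D_{s\rightarrow r}(\omega_{\sigma(I)})=(-1)^{\ell(\sigma)}D_{s\rightarrow r}(\omega_I)$. I would check this by reducing to an adjacent transposition $\sigma=s_h$: each occurrence of $r$ in $I$ corresponds bijectively to an occurrence of $r$ in $\sigma(I)$, and the sequence obtained by replacing the corresponding occurrence by $s$ in $\sigma(I)$ is $\sigma$ applied to the sequence obtained by replacing that occurrence in $I$; hence the transformation rule gives the common factor $(-1)^{\ell(\sigma)}$ term by term, and $\sigma$ merely reindexes the summands. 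This is the only genuine computation, and I expect the bookkeeping of occurrences and signs to be the main (though routine) obstacle.

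Finally I would combine these facts. Choose a permutation $\sigma\in B_d$ carrying the pairs that contain $r$ (each containing it exactly once) to the initial positions $1,\dots,c$. Then $\sigma(I)$ satisfies the hypotheses of Theorem \ref{act}, so $x_s\partial_r\,\omega_{\sigma(I)}=\sum_{b=1}^c\omega_{(\sigma(I))^{b,s,r}}$; and since each of these $c$ front pairs contains $r$ exactly once and the remaining pairs contain no $r$, the right-hand side is exactly $D_{s\rightarrow r}(\omega_{\sigma(I)})$. Using $\omega_I=(-1)^{\ell(\sigma)}\omega_{\sigma(I)}$ together with the equivariance of $D_{s\rightarrow r}$ and $(-1)^{2\ell(\sigma)}=1$, I obtain
\[
x_s\partial_r\,\omega_I=(-1)^{\ell(\sigma)}x_s\partial_r\,\omega_{\sigma(I)}=(-1)^{\ell(\sigma)}D_{s\rightarrow r}(\omega_{\sigma(I)})=D_{s\rightarrow r}(\omega_I),
\]
which is the desired identity.
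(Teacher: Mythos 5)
Your proof is correct and follows essentially the same route as the paper: the paper likewise disposes of the degenerate case (where all summands of $D_{s\rightarrow r}(\omega_I)$ vanish except two that cancel), then chooses an element of $B_d$ moving the pairs containing $r$ to the initial positions and concludes from Theorem \ref{act} together with the compatibility of $D_{s\rightarrow r}$ with the $B_d$-action. The only difference is that the paper leaves this equivariance ``to the reader,'' while you sketch it via reduction to adjacent transpositions (and correctly observe that plain permutations suffice) --- a detail worth having, but not a different proof.
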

\begin{proof}
 If there exists $k$ such that $i_{k}=j_{k}$ then $\omega_I=0$ and clearly also $D_{s\rightarrow r}(\omega _I)=0$ since all summands in the definition above vanish except possibly two of them which cancel out.
 If such $k$ does not exist let $w\in B_d$ be such that $J=w(I)$ satisfies the following property: there exists $0\leq c\leq d$ such that $r$ appears in $J_1,\ldots,J_c$ and does not appear in $J_{c+1},\ldots,J_d$. By Theorem \ref{act} we know that the result holds for $J$ hence the result follows since $D_{s\rightarrow r}$ commutes with the action of $B_d$ (we leave this to the reader).
\end{proof}
\begin{corollary}\label{basi}
The map 
\[
\varphi:\bigoplus_{k}\Sym^k({\mathbb C^5}^*)\otimes \displaywedge^{d-2k}(\displaywedge^2 \mathbb C^5)\rightarrow (U_-)_d
\]
given by
\[
\varphi(x_{t_1}^*\cdots x_{t_k}^*\otimes x_{i_1 j_1}\wedge \cdots\wedge x_{i_{d-2k} j_{d-2k}})=\partial_{t_1}\cdots \partial_{t_k} \omega_{(i_1,j_1),\ldots,(i_{d-2k},j_{d-2k})}
\]
for all $k\leq d/2$ and  $t_1,\dots,t_k, i_1, j_1,\dots, i_{d-2k}, j_{d-2k}\in [5]$ is an isomorphism of $L_0$-modules, hence the set
$$\bigcup_{k\leq d/2}\{\partial_T\omega_I~|~ T=(t_1,\dots,t_k)\in[5]^k, t_1<\dots<t_k, I\in{\mathcal{I}}_{d-2k}/B_{d-2k}\}$$
is a basis of $(U_-)_d$.
\end{corollary}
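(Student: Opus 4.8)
The plan is to prove that $\varphi$ is a well-defined isomorphism of $L_0$-modules; the basis assertion then follows at once, since the displayed set is exactly the image under $\varphi$ of the standard monomial basis of the source. It is convenient to record from the outset that source and target have the same finite dimension in each degree: by PBW applied to $L_-$ and the filtration $(U_-)_d=(U_-)_{d,0}\supseteq(U_-)_{d,1}\supseteq\cdots$ discussed above, one has the vector space decomposition $(U_-)_d\cong\bigoplus_k\Sym^k((\C^5)^*)\otimes\inlinewedge^{d-2k}(\inlinewedge^2\C^5)$, matching the source.

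First I would verify that $\varphi$ respects the defining relations of the source, so that it is well defined. On the symmetric factor the only relation is commutativity of the $x_i^*$, which is matched by the commutativity of the operators $\partial_i$; thus $\partial_T$ depends only on the multiset of indices in $T$. On the exterior factor $\inlinewedge^{d-2k}(\inlinewedge^2\C^5)$ the relations are the antisymmetry $x_i\wedge x_j=-x_j\wedge x_i$ within a two-vector, the vanishing $x_i\wedge x_i=0$, the antisymmetry under interchanging two wedge factors, and the vanishing of a wedge with a repeated factor. Each of these is precisely one of the identities already proved for the $\omega_I$: the sign relation $\omega_{g(I)}=(-1)^{\ell(g)}\omega_I$ for $g\in B_d$ handles the first and third (via the generator $s_0$ and via simple transpositions, respectively), while its corollary $\omega_I=0$ for a repeated or reversed pair handles the second and fourth. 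Hence $\varphi$ is well defined.

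Next I would check $L_0$-equivariance. Since the root vectors $x_s\partial_r$ with $r\neq s$ generate $\slc$, it is enough to verify $\varphi(x_s\partial_r.v)=x_s\partial_r.\varphi(v)$ on a decomposable $v$. Under the module isomorphisms $L_{-2}\cong(\C^5)^*$ and $L_{-1}\cong\inlinewedge^2\C^5$ the actions read $x_s\partial_r.\partial_t=-\delta_{ts}\partial_r$ and $x_s\partial_r.d_{ij}=\delta_{ir}d_{sj}+\delta_{jr}d_{is}$, matching $x_s\partial_r.x_t^*=-\delta_{ts}x_r^*$ and $x_s\partial_r.(x_i\wedge x_j)=\delta_{ir}x_s\wedge x_j+\delta_{jr}x_i\wedge x_s$ on the source. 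Applying the Leibniz rule on the target gives
\[
x_s\partial_r.(\partial_T\omega_I)=(x_s\partial_r.\partial_T)\,\omega_I+\partial_T\,(x_s\partial_r.\omega_I),
\]
where the crucial second summand is computed by Corollary \ref{omegaction} as $\partial_T\,D_{s\rightarrow r}(\omega_I)$. The same Leibniz rule on the source produces a symmetric part and an exterior part which $\varphi$ sends respectively to $(x_s\partial_r.\partial_T)\,\omega_I$ and, by the very definition of $D_{s\rightarrow r}$, to $\partial_T\,D_{s\rightarrow r}(\omega_I)$. The two computations agree, so $\varphi$ is $L_0$-equivariant.

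Finally I would establish bijectivity by a triangularity argument. In Definition \ref{defomega} the summand $S=\emptyset$ equals $d_I$, while every other summand carries at least one factor $D_{S_j}(I)\in(U_-)_2$, hence at least one $\partial$; therefore $\omega_I=d_I+(\text{terms of strictly higher filtration})$, and for $|T|=k$ one gets $\partial_T\omega_I=\partial_T d_I+(\text{terms in }(U_-)_{d,k+1})$. Consequently the map induced by $\varphi$ on the associated graded $\bigoplus_k(U_-)_{d,k}/(U_-)_{d,k+1}$ coincides with the inverse of the isomorphism $\psi$ recalled above, sending $x_T^*\otimes x_I$ to the class of $\partial_T d_I$. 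Being an isomorphism on each graded piece and a map between spaces of equal finite dimension, $\varphi$ is itself an isomorphism, and applying it to the monomial basis $\{x_{t_1}^*\cdots x_{t_k}^*\otimes x_I:\ t_1\leq\cdots\leq t_k,\ I\in\mathcal I_{d-2k}/B_{d-2k}\}$ of the source yields the asserted basis of $(U_-)_d$. The only delicate point in the whole argument is this last triangularity step: one must be sure that the leading term of $\omega_I$ is exactly $d_I$ and that all corrections lie strictly deeper in the filtration, which is what forces $\mathrm{gr}(\varphi)=\psi^{-1}$ and hence forces $\varphi$ to be bijective; everything else has been reduced to the Proposition and to Corollary \ref{omegaction}.
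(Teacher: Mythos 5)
Your proof is correct and follows exactly the route the paper intends: the corollary is stated there without a written proof, as an immediate consequence of the $B_d$-sign proposition (well-definedness), Corollary \ref{omegaction} (equivariance), and the filtration $(U_-)_{d,k}$ with its graded isomorphism $\psi$ (bijectivity via the triangularity $\omega_I=d_I+\text{higher filtration terms}$). Your write-up simply makes these three ingredients from Section \ref{S5} explicit, so there is nothing to add.
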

\section{Properties of the dominance order}\label{S6}
In this section we establish simple combinatorial criteria to determine whether the weights of vectors in $U_-$ and $(U_-)^*$ are comparable.
\begin{remark}\label{thetaction} If $\varphi:M(V)\rightarrow M(W)$ is a linear map of degree $d$ which satisfies condition (a) of Proposition \ref{morphisms} let $\psi:(U_-)_d^*\rightarrow \Hom(V,W)$  be as in Remark \ref{dual}. By Corollary \ref{basi} we can identify $(U_-)_d^*$ with $\bigoplus_{k}\Sym^k({\mathbb \C^5})\otimes \inlinewedge^{d-2k}(\inlinewedge^2 \mathbb (\C^5)^*)$ and we let for all $T=(t_1,\dots,t_k)\in [5]^k$ and
$I=(I_1,\dots, I_{d-2k})\in \mathcal{I}_{d-2k}$ with $I_h=(i_h,j_h)$, 
\[\theta_I^T=\psi(x_{t_1}\cdots x_{t_k}\otimes x_{i_1 j_1}^*\wedge \cdots\wedge x_{i_{d-2k} j_{d-2k}}^*).
\]
We observe that $\theta_{g(I)}^T=(-1)^{\ell(g)}\theta_I^T$ for every $g\in B_{d-2k}$ hence $\partial_T\omega_I\otimes \theta_I^T$ is invariant
with respect to the action of $B_{d-2k}$ on $I$. We can thus write
$$\Phi=\sum_{\substack{T=(t_1,\ldots,t_k):\\1\leq t_1\leq \cdots \leq t_k\leq 5}}\sum_{I\in {\mathcal I}_{d-2k}/B_{d-2k}}\partial_T\omega_I\otimes\theta_{I}^T.$$
Moreover, we have:
\[x_s\partial_r.\theta_I^T=x_s\partial_r.\theta_{I_1,\dots,I_{d-2k}}^{t_1,\dots,t_k}=\sum_{h=1}^k\Delta_{s\rightarrow r}^{h}\theta_{I}^T
-\sum_{l=1}^{d-2k}D_{r\rightarrow s}^{l}\theta_I^T\]
where $\Delta_{s\rightarrow r}^h(\theta_I^T)=\delta_{r,t_h}\theta_I^{t_1,\dots,t_{h-1},s,t_{h+1},\dots, t_k}$ and 
\[D_{r\rightarrow s}^l(\theta_I^T)=\delta_{i_l,s}\theta^T_{I_1,\dots,I_{l-1},(r,j_l),I_{l+1},\dots, I_{d-2k}}+\delta_{s,j_l}\theta^T_{I_1,\dots,I_{l-1},(i_l,r),I_{l+1},\dots, I_{d-2k}}.\]
\end{remark}

We now study the dominance order on the weights of the elements $d_I$, $\omega_I$ and $\theta_I^T$. This will turn out to play a fundamental role in the study of morphisms of
Verma modules. 

We observe that $d_{kl}$ is a weight vector for $L_0$. Indeed we have:
\[[h_{ij},d_{kl}]=(\delta_{i,k}+\delta_{i,l}-\delta_{j,k}-\delta_{j,l})d_{k,l}
\]
and so $\lambda_{ij}(d_{kl})$ is the number of occurrences of $i$  minus the number of occurrences of $j$ in $\{k,l\}$. 
If $I=(i_1,\ldots,i_d)$ is a sequence of integers and we let 
\[
m_k(I)=|\{s\in [d]:\, i_s=k\}|\
\]
be the multiplicity of $k$ in $I$, 
 we have
\[
\lambda_{ij}(d_{kl})=m_i(k,l)-m_j(k,l).
\]
More generally, if $I=\{i_{1},j_{1},\ldots,i_{d},j_{d}\}$ and $d_I=d_{i_{1}j_{1}}\cdots d_{i_{d}j_{d}}$ we have
\[
\lambda_{ij}(d_I)=m_i(I)-m_j(I).
\]

In order to understand when the weights of $d_I$ and $d_K$ are comparable in the dominance order, we first observe that the weight of $d_I$ does not depend on the order of its entries.
If $I=(i_1,\ldots,i_{2d})$ we let $I_o=(i'_1,\ldots,i'_{2d})$ be the non decreasing reordering of $I$. We write $I\leq K$ if $i'_1\leq k'_1,\,\ldots, i'_{2d}\leq k'_{2d}$ and $I<K$ if $I\leq K$ and at least one of the previous inequalities is strict (notice that this is different that requiring $I\neq K$). 
\begin{proposition}\label{dominance}For all $I,K\in \mathcal I_d$ we have $\lambda(d_I)\geq \lambda (d_K)$ if and only if $I\leq K$.
\end{proposition}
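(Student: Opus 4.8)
The plan is to reduce the statement to a purely combinatorial comparison of multiplicity vectors and then to a standard sorting lemma. Throughout, write $a_m=m_m(I)$ and $b_m=m_m(K)$ for $m\in[5]$, the multiplicities of the index $m$ in the full multiset of $2d$ letters of $I$ and of $K$; note $\sum_m a_m=\sum_m b_m=2d$. The formula $\lambda_{ij}(d_I)=m_i(I)-m_j(I)$ recorded before the statement gives, in fundamental coordinates, $\lambda_{k,k+1}(d_I)=a_k-a_{k+1}$ and likewise for $K$. The first goal is to show that the dominance inequality $\lambda(d_I)\geq\lambda(d_K)$ is equivalent to the prefix-count inequalities
\[
\sum_{m=1}^k a_m \ \geq\ \sum_{m=1}^k b_m \qquad (k=1,\ldots,5),
\]
where the case $k=5$ is the automatic equality $2d=2d$; the second goal is to identify these inequalities with the condition $I\leq K$.

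For the first goal I set $e_m=a_m-b_m$ and $E_k=\sum_{m=1}^k e_m$, so that $E_5=0$. Writing the weight difference as $\lambda(d_I)-\lambda(d_K)=\sum_{k=1}^4 c_k\,\alpha_{k,k+1}$, the coefficients $c_k$ are uniquely determined since the Cartan matrix of $\mathfrak{sl}_5$ (type $A_4$) is invertible. I would read off fundamental coordinates on both sides: the $k$-th coordinate of the left-hand side is $e_k-e_{k+1}$, while that of the right-hand side is $-c_{k-1}+2c_k-c_{k+1}$ (with the convention $c_0=c_5=0$). Substituting the ansatz $c_k=E_k$ telescopes correctly and verifies all four equations, the boundary equation at $k=4$ using $E_5=0$; hence $c_k=E_k=\sum_{m\leq k}(a_m-b_m)$. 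By definition of the dominance order, $\lambda(d_I)\geq\lambda(d_K)$ holds iff every $c_k\geq 0$, which is exactly the displayed prefix-count inequalities. The direction reversal should be noted here: smaller indices carry higher weight, so $I$ having more small letters corresponds to $d_I$ carrying the \emph{larger} weight.

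For the second goal, let $I_o=(i'_1,\ldots,i'_{2d})$ and $K_o=(k'_1,\ldots,k'_{2d})$ be the non-decreasing reorderings, and observe that $N_I(k):=\sum_{m=1}^k a_m$ is precisely the number of entries of $I_o$ that are $\leq k$. I would then prove the elementary sorting lemma that $i'_s\leq k'_s$ for all $s$ iff $N_I(k)\geq N_K(k)$ for all $k\in[5]$. The forward direction is immediate: if $k'_s\leq k$ then $i'_s\leq k'_s\leq k$, so every entry of $K_o$ that is $\leq k$ is matched by an entry of $I_o$ that is $\leq k$. For the converse I would argue by contradiction: if $i'_s> k'_s$ for some $s$, then taking $k=k'_s$ gives $N_K(k)\geq s$, while monotonicity of $I_o$ forces $i'_t> k$ for all $t\geq s$, whence $N_I(k)\leq s-1< N_K(k)$. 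Combining the two goals yields the proposition.

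I expect the only real care to lie in the first goal: fixing the direction of the inequality (the weight order being reversed relative to the index order) and solving the tridiagonal Cartan system so that the coefficients emerge as clean prefix sums. The sorting lemma is routine, and there are no analytic or representation-theoretic difficulties beyond this bookkeeping.
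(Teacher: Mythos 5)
Your proof is correct, and it takes a genuinely different route from the paper's. The paper proceeds by telescoping through adjacent moves: it first computes that incrementing a single sorted entry from $h$ to $h+1$ changes the weight by exactly $\alpha_{h,h+1}$, deduces the closed formula $\lambda(d_I)-\lambda(d_K)=\alpha_{i_1,k_1}+\cdots+\alpha_{i_{2d},k_{2d}}$ (with the paper's sign conventions $\alpha_{ji}=-\alpha_{ij}$, $\alpha_{ii}=0$), which immediately gives the ``if'' direction, and for ``only if'' it asserts that taking $r$ minimal with $i_r>k_r$ forces the simple root $\alpha_{k_r,k_r+1}$ to appear with a negative coefficient. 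You instead invert the tridiagonal Cartan system directly, obtaining the coefficients in closed form as prefix sums $c_k=E_k=\sum_{m\leq k}\bigl(m_m(I)-m_m(K)\bigr)$, and then reduce the componentwise comparison of sorted tuples to the prefix-count inequalities via a standard sorting lemma. The two computations agree --- the coefficient of $\alpha_{k,k+1}$ in the paper's telescoped sum is precisely the number of $s$ with $i_s\leq k<k_s$ minus the number with $k_s\leq k<i_s$, i.e.\ your $E_k$ --- but your version has two advantages: the coefficients are exhibited explicitly rather than implicitly, and your contradiction argument (taking $k=k'_s$ at a violated position $s$ to get $N_I(k)\leq s-1<s\leq N_K(k)$, noting $k'_s\leq 4$ there) is exactly the verification of the paper's terse claim that the minimal violation produces a negative coefficient, which the paper leaves to the reader. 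The paper's approach buys a formula for the weight difference as a sum of (possibly negative) roots indexed by positions, which is conceptually aligned with its later root-combinatorics; yours buys a self-contained majorization statement with no asserted steps. Both correctly exploit the reduction to sorted tuples and the reversal of direction (more small letters in $I$ means larger weight for $d_I$).
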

\begin{proof}
We can assume that $I=(i_1,\ldots,i_{2d})$ and $K=(k_1,\ldots,k_{2d})$ are such that $I=I_o$ and $K=K_o$.\
We express the difference of the weights as a linear combination of roots. First assume that all entries of $I$ and $K$ coincide except in position $r$ and that $i_r=h$ and $k_r={h+1}$.
We have $m_l(I)=m_l(K)$ for all $l\neq h,h+1$, $m_h(I)=m_h(K)+1$ and $m_{h+1}(I)=m_{h+1}(K)-1$. Therefore $\lambda_{l,l+1}(d_I)=\lambda_{l,l+1}(d_K)$ for all $l\neq h-1,h,h+1$, $\lambda_{h-1,h}(d_I)=\lambda_{h-1,h}(d_K)+1$, (if $h\neq 1$), $\lambda_{h,h+1}(d_I)=\lambda_{h,h+1}(d_K)-2$ and $\lambda_{h+1,h+2}(d_I)=\lambda_{h+1,h+2}(d_K)+1$ (if $h\neq 4$). Therefore
\[
\lambda(d_I)-\lambda(d_K)=\alpha_{h,h+1}.
\]
From this we can deduce that 
\[
\lambda(d_I)-\lambda(d_K)=\alpha_{i_1,k_1}+\alpha_{i_2,k_2}+\cdots+\alpha_{i_{2d},k_{2d}}.
\]
In particular, if $i_1\leq k_1,\ldots,i_{2d}\leq k_{2d}$ then $\lambda(d_I)\geq \lambda(d_K)$. Now we assume that the inequalities $i_1\leq k_1,\ldots,i_{2d}\leq k_{2d}$ are not all satisfied and we let $r$ be minimum such that $i_r>k_r$. If we express $\lambda(d_I)-\lambda(d_K)$ as a linear combination of the simple roots then $\alpha_{k_r,k_{r+1}}$ necessarily appears with a negative coefficient and we are done.
\end{proof}
\begin{corollary}\label{dominancetheta} For all $I,K\in \mathcal I_d$ and all $T,R\in [5]^k$ we have:
\begin{itemize}
\item[(i)] $\lambda(\theta^T_I)\leq \lambda (\theta_K^T)$ if and only if $I\leq K$;
\item[(ii)] $\lambda(\theta^T_I)\geq \lambda (\theta_I^R)$ if and only if $T\leq R$.
\end{itemize}
\end{corollary}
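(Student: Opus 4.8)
The plan is to reduce both parts to Proposition \ref{dominance} by computing the weight of $\theta_I^T$ explicitly and observing that it splits additively into a contribution depending only on $T$ and one depending only on $I$.

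First I would record that, under the identification of $(U_-)_d^*$ in Remark \ref{thetaction} and the $L_0$-equivariance of $\psi$, the weight $\lambda(\theta_I^T)$ coincides with the weight of the element $x_{t_1}\cdots x_{t_k}\otimes x_{i_1j_1}^*\wedge\cdots\wedge x_{i_{d-2k}j_{d-2k}}^*$ of $(U_-)_d^*$ (equivalently, with $-\lambda(\partial_T\omega_I)$, since $\omega_I$ is a weight vector with the same weight as its leading term $d_I$). Computing directly, $x_r\in\C^5$ has $\lambda_{ij}(x_r)=\delta_{ir}-\delta_{jr}=m_i(r)-m_j(r)$, while $x_{ij}^*\in(\inlinewedge^2\C^5)^*$ carries the opposite weight of $x_{ij}$. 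Summing the contributions I obtain
\[
\lambda_{ij}(\theta_I^T)=\big(m_i(T)-m_j(T)\big)-\big(m_i(I)-m_j(I)\big),
\]
that is $\lambda(\theta_I^T)=\mu(T)-\lambda(d_I)$, where $\mu(T)$ is the weight determined by $\mu_{ij}(T)=m_i(T)-m_j(T)$ and $\lambda(d_I)$ is as in Proposition \ref{dominance}. The only point requiring care here is keeping track of the two dualization signs, one from passing to $(U_-)_d^*$ and one from $x_{ij}^*$.

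For part (i) I fix $T$: then $\lambda(\theta_K^T)-\lambda(\theta_I^T)=\lambda(d_I)-\lambda(d_K)$, so $\lambda(\theta_I^T)\leq\lambda(\theta_K^T)$ holds precisely when $\lambda(d_I)\geq\lambda(d_K)$, which by Proposition \ref{dominance} is equivalent to $I\leq K$. For part (ii) I fix $I$: the $\lambda(d_I)$ terms cancel and $\lambda(\theta_I^T)\geq\lambda(\theta_I^R)$ becomes $\mu(T)\geq\mu(R)$. To finish I would invoke the exact analogue of Proposition \ref{dominance} for sequences of single indices, namely that for $T,R\in[5]^k$ one has $\mu(T)\geq\mu(R)$ if and only if $T\leq R$. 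Its proof is verbatim that of Proposition \ref{dominance}: the weight $\mu(T)$ depends only on the multiset of entries of $T$, one reorders $T$ and $R$ nondecreasingly, writes $\mu(T)-\mu(R)=\sum_m\alpha_{t'_m,r'_m}$, and then reads off either positivity or the obstructing negative coefficient of a simple root. I expect this routine single-index analogue to be the only content beyond bookkeeping; no genuinely new idea is needed, since both equivalences are immediate once the additive weight formula $\lambda(\theta_I^T)=\mu(T)-\lambda(d_I)$ is in hand and Proposition \ref{dominance} is applied in each variable separately.
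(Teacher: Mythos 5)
Your proposal is correct, and it splits naturally into two comparisons. For part (i) your argument coincides with the paper's: both rest on the additive weight formula $\lambda(\theta_I^T)=-\lambda(\partial_T)-\lambda(d_I)$ (your $\mu(T)$ is exactly $-\lambda(\partial_T)$, and your justification via the $L_0$-equivariance of $\psi$ from Remark \ref{thetaction} is the intended one), followed by Proposition \ref{dominance} with $T$ fixed. For part (ii) you take a genuinely different final step. The paper avoids proving any new dominance criterion: it introduces the complementation $T\mapsto T^c\in\mathcal{I}_{2k}$, where $t^{(1)}<t^{(2)}<t^{(3)}<t^{(4)}$ are the elements of $[5]\setminus\{t\}$, observes that $\lambda(\partial_T)=\lambda(d_{T^c})$ (true because the weights of $x_1,\dots,x_5$ sum to zero for $\mathfrak{sl}_5$, so the weight of $x_t^*$ equals that of the product over the complementary indices) and that complementation reverses the order, $T\leq R$ if and only if $T^c\geq R^c$, and then reuses part (i), i.e.\ Proposition \ref{dominance}, as a black box. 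You instead prove the single-index analogue of Proposition \ref{dominance} on $[5]^k$ by rerunning its proof; this is legitimate, since that proof never uses that the index sequence has even length or is grouped into pairs --- it only uses the multiplicity formula $\lambda_{ij}=m_i-m_j$, the telescoping identity $\mu(T)-\mu(R)=\alpha_{t'_1,r'_1}+\cdots+\alpha_{t'_k,r'_k}$ after nondecreasing reordering, and the minimal-violation argument extracting a negative coefficient of a simple root, all of which go through verbatim. What each route buys: the paper's complementation trick is shorter on the page but hides two small verifications (the weight identity modulo the trace relation, and the order reversal under complementation); your route is more self-contained and slightly more general, being simply the dominance criterion for monomial weights in $\Sym^k(\C^5)$, at the cost of repeating the combinatorial argument. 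Both yield complete proofs, with the same sign conventions and the same conclusion in each part.
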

\begin{proof} In order to prove $(i)$ it is sufficient to notice that $\lambda(\theta_I^T)=-(\lambda(\partial_T\omega_I))
=-\lambda(\partial_T)-\lambda(d_I)$ and then use Proposition \ref{dominance}.

In order to prove $(ii)$ it is convenient to introduce the following notation. For $t\in[5]$ we let $t^{(1)}<t^{(2)}< t^{(3)} <t^{(4)}$
such that $\{t, t^{(1)}, t^{(2)}, t^{(3)} ,t^{(4)}\}=[5]$ and, 
for
$T=(t_1,\dots, t_k)\in[5]^k$, $T^c=(t_1^{(1)} t_1^{(2)}, t_1^{(3)} t_1^{(4)}, \dots, t_k^{(1)} t_k^{(2)}, t_k^{(3)} t_k^{(4)})\in\mathcal{I}_{2k}$. 
Then it is enough to notice that $\lambda(\partial_T)=\lambda(d_{T^c})$ and that $T\leq R$ if and only if $T^c\geq R^c$. Then one can use $(i)$.
\end{proof}
\section{Duality}\label{six}
Consider a morphism  $\varphi:M(V)\rightarrow M(W)$ of generalized Verma modules of degree $d$ associated to an element $\Phi\in (U_-)_d\otimes \Hom(V,W)$. We ask the natural question: does it exist a ``related'' morphism $\psi:M(W^*)\rightarrow M(V^*)$ of the same degree $d$? The first natural candidate to look at is the following: if $\Phi=\sum_i u_i\otimes \theta_i$, where $\{u_i~|~i\in I\}$ is any basis of $(U_-)_d$ and  $\theta_i\in \Hom(V,W)$ then we can consider the linear map $\psi:M(W^*)\rightarrow M(V^*)$ associated to $\Psi=\sum_i u_i \otimes \theta_i^*$, where, for all $\theta\in \Hom(V,W)$ we denote by $\theta^*\in \Hom(W^*,V^*)$ the pull-back of $\theta$ given by $\theta^*(f)=f\circ \theta$ for all $f\in W^*$. One can easily check that the map $\psi$ does not depend on the chosen basis $\{u_i~|~i\in I\}$ of $(U_-)_d$. It turns out that for $d=1$ the map $\psi$ is also a morphism of $L$-modules, but this is not the case in general if the degree $d$ is at least 2.

In this section we develop some tools which will allow us to construct a morphism of $L$-modules $\psi:M(W^*)\rightarrow M(V^*)$ starting from a morphism $\varphi:M(V)\rightarrow M(W)$ of degree at most 3 and we conjecture that our construction provides such morphism in all degrees.

The main result that we will need is the following. 
\begin{proposition}\label{lemdual}
Let $\theta_1,\ldots,\theta_r, \sigma_1,\ldots, \sigma_s\in \Hom(V,W)$ for some $L_0$-modules $V$, $W$, and let $z_1,\ldots,z_t\in L_0$. Let $a_i, b_{j,k}\in \mathbb C$ be such that
\[
 \sum_i a_i \theta_i(v)+\sum_{j,k}b_{j,k}\big(z_k.(\sigma_j(v))+\sigma_j(z_k.v)\big)=0\in W
\]
for all $v\in V$. Then
\[
 \sum_i a_i \theta^*_i(f)+\sum_{j,k}b_{j,k}\big(z_k.((-\sigma_j^*)(f))+(-\sigma_j^*)(z_k.f)\big)=0\in V^*
\]
for all $f\in W^*$.
\end{proposition}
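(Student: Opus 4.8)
The plan is to exploit the defining relation between a linear map $\theta \in \Hom(V,W)$ and its pull-back $\theta^* \in \Hom(W^*,V^*)$, namely $\langle \theta^*(f), v\rangle = \langle f, \theta(v)\rangle$ for all $v\in V$ and $f\in W^*$, and to transpose the given identity by pairing it against an arbitrary $v\in V$. The key observation is that the action of $z\in L_0$ on a dual module $V^*$ is, by definition, the negative transpose of its action on $V$: for $z \in L_0$, $f\in V^*$ and $v\in V$ we have $\langle z.f, v\rangle = -\langle f, z.v\rangle$. This sign is exactly the one that makes the combination $z_k.((-\sigma_j^*)(f)) + (-\sigma_j^*)(z_k.f)$ in the conclusion the correct dual of the combination $z_k.(\sigma_j(v)) + \sigma_j(z_k.v)$ in the hypothesis.

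First I would fix $f\in W^*$ and $v\in V$ and compute the pairing $\langle \Xi(f), v\rangle$, where $\Xi(f)$ denotes the left-hand side of the desired conclusion. Term by term: the pairing $\langle a_i\theta_i^*(f), v\rangle$ equals $a_i\langle f, \theta_i(v)\rangle$ directly from the definition of the pull-back. For the remaining terms I would expand using the definition of the $L_0$-action on $V^*$ and the pull-back relation; concretely,
\begin{align*}
\langle z_k.((-\sigma_j^*)(f)), v\rangle &= -\langle (-\sigma_j^*)(f), z_k.v\rangle = \langle \sigma_j^*(f), z_k.v\rangle = \langle f, \sigma_j(z_k.v)\rangle,\\
\langle (-\sigma_j^*)(z_k.f), v\rangle &= -\langle \sigma_j^*(z_k.f), v\rangle = -\langle z_k.f, \sigma_j(v)\rangle = \langle f, z_k.(\sigma_j(v))\rangle,
\end{align*}
where in the last equality I again used that $z_k$ acts on $W^*$ by the negative transpose of its action on $W$. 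Summing these two lines, the two minus signs conspire so that $\langle z_k.((-\sigma_j^*)(f)) + (-\sigma_j^*)(z_k.f), v\rangle = \langle f, z_k.(\sigma_j(v)) + \sigma_j(z_k.v)\rangle$.

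Assembling all terms, I obtain
\[
\langle \Xi(f), v\rangle = \Big\langle f,\ \sum_i a_i\theta_i(v) + \sum_{j,k} b_{j,k}\big(z_k.(\sigma_j(v)) + \sigma_j(z_k.v)\big)\Big\rangle,
\]
and the quantity inside the pairing on the right is precisely the left-hand side of the hypothesis, which vanishes for every $v\in V$. Hence $\langle \Xi(f), v\rangle = 0$ for all $v\in V$, and since the pairing between $V^*$ and $V$ is nondegenerate, $\Xi(f) = 0$. As $f\in W^*$ was arbitrary, this is exactly the claimed conclusion. I do not anticipate a genuine obstacle here: the content is entirely the careful bookkeeping of the sign coming from the $L_0$-action on duals, and the main point to get right is verifying that the signs in the combination $z_k.((-\sigma_j^*)(f)) + (-\sigma_j^*)(z_k.f)$ are matched correctly against the original derivation-type expression — which is why the pull-back appears with the extra minus sign $-\sigma_j^*$ rather than $\sigma_j^*$.
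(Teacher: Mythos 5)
Your proof is correct and is essentially the paper's own argument: both evaluate the dual-side expression on an arbitrary $v\in V$, expand using the pull-back relation $\theta^*(f)=f\circ\theta$ and the sign rule $\langle z.f,v\rangle=-\langle f,z.v\rangle$ for the $L_0$-action on duals, and recognize the result as $f$ applied to the (vanishing) hypothesis expression. The only difference is notational — you write the computation via the pairing $\langle\cdot,\cdot\rangle$ and invoke nondegeneracy explicitly, where the paper applies the functional directly to $v$ — so there is nothing substantive to add.
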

\begin{proof}
For all $v\in V$ we have
\begin{align*}
  \Big( \sum_i a_i &\theta^*_i(f)+\sum_{j,k}b_{j,k}\big(z_k.((-\sigma_j^*)(f))+(-\sigma_j^*)(z_k.f)\big)\Big)(v)\\
  &=\sum_i a_i f(\theta_i(v))+\sum_{j,k}b_{j,k}\big(\sigma_j^*(f))(z_k.v)+(z_k.f)(-\sigma_j(v))\big)\\
  &=\sum_i a_i f(\theta_i(v))+\sum_{j,k}b_{j,k}\big(f(\sigma_j(z_k.v))+f(z_k.(\sigma_j(v))\big)\\
  &=f\Big( \sum_i a_i \theta_i(v)+\sum_{j,k}b_{j,k}\big(\sigma_j(z_k.v)+z_k.(\sigma_j(v)\big) \Big)\\
  &=0.
\end{align*}

\end{proof}

\begin{remark}\label{lemdual2}
We will use Proposition \ref{lemdual} also in the following equivalent formulation: let $\theta_1,\ldots,\theta_r$, $\sigma_1,\ldots, \sigma_s\in \Hom(V,W)$ for some $L_0$-modules $V$, $W$ and $z_1,\ldots,z_t\in L_0$. Let $a_i, b_{j,k}\in \mathbb C$ be such that
\[
 \sum_i a_i \theta_i(v)+\sum_{j,k}b_{j,k}\big(2z_k.(\sigma_j(v))-(z_k.\sigma_j)(v)\big)=0\in W
\]
for all $v\in V$. Then
\[
 \sum_i a_i \theta^*_i(f)+\sum_{j,k}b_{j,k}\big(2z_k.((-\sigma_j^*)(f))-(z_k.(-\sigma_j^*))(f)\big)=0\in V^*
\]
for all $f\in W^*$.
\end{remark}

\begin{conjecture}\label{conjdual}
 Let $\varphi:M(V)\rightarrow M(W)$ be a morphism of degree $d$ associated to $\Phi:=\sum_{T,I}{\partial_T\omega_I}\otimes \theta^T_I$ for some $\theta^T_I\in \Hom(V,W)$. Then the linear map $\psi:M(W^*)\rightarrow M(V^*)$ associated to $\Psi:=\sum_{T,I}\partial_T\omega_I\otimes (-1)^{\ell(T)}(\theta^T_I)^*$ is also a morphism of Verma modules, where
 if $T\in[5]^k$, we let $\ell(T)=k$.
\end{conjecture}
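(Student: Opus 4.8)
The plan is to verify, for the linear map $\psi$, the two conditions of Proposition \ref{morphisms}, namely $(a)$ $L_0.\Psi=0$ and $(b)$ $t\Psi(f)=0$ for every $t\in L_1$ and every $f\in W^*$. By the observation following Proposition \ref{morphisms} it suffices to treat $(b)$ for the single generator $t=x_5d_{45}$ (once $(a)$ is known). Condition $(a)$ is the easy half and I would dispose of it first. The passage $\Phi\mapsto\Psi$ is the composition of two $L_0$-equivariant operations. The first is the pull-back $\theta\mapsto\theta^*$: a direct check gives $(z.\theta)^*=z.(\theta^*)$ for all $z\in L_0$, so $\theta\mapsto\theta^*$ is an $L_0$-morphism $\Hom(V,W)\to\Hom(W^*,V^*)$. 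The second is the sign twist $\tau$ on $(U_-)_d$ acting as the scalar $(-1)^k$ on the summand $\Sym^k((\C^5)^*)\otimes\inlinewedge^{d-2k}(\inlinewedge^2\C^5)$ of Corollary \ref{basi}, so that $\tau(\partial_T\omega_I)=(-1)^{\ell(T)}\partial_T\omega_I$; since each summand is an $L_0$-submodule, $\tau$ is an $L_0$-automorphism. Then $\Psi=\sum_{T,I}\tau(\partial_T\omega_I)\otimes(\theta_I^T)^*$, i.e.\ $\Psi$ is the image of $\Phi$ under an $L_0$-equivariant map, whence $L_0.\Phi=0$ forces $L_0.\Psi=0$.

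The substance is condition $(b)$. I would expand $t\Phi(v)=0$ in the basis of $(U_-)_{d-1}$ given by Corollary \ref{basi}. Writing $t\cdot\partial_T\omega_I$ in PBW form and using the only two relevant brackets, $[x_5d_{45},\partial_k]=-\delta_{k5}d_{45}$ and $[x_5d_{45},d_{ij}]=\varepsilon_{45ij}x_5\partial_{t_{45ij}}$, one sees that $t$ acts on $M(W)$ in two ways: it may convert an occurrence of $\partial_5$ into $d_{45}$ (keeping everything inside $U_-$), or it may delete a factor $d_{ij}$ and release an element $z=x_5\partial_{t_{45ij}}\in L_0$, which is then commuted to the right to act on the target slot. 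Collecting the coefficient of a fixed basis vector $\beta=\partial_{T'}\omega_{I'}$, condition $(b)$ for $\varphi$ becomes, for each $\beta$, a relation of the shape
\[
\sum_i a_i\,\theta_i(v)+\sum_{j,k}b_{j,k}\,z_k.\bigl(\sigma_j(v)\bigr)=0\qquad(\forall v\in V),
\]
where the $\theta_i,\sigma_j$ range among the $\theta_I^T$ and the $z_k=x_5\partial_m$ lie in $L_0$.

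This is not yet in the form required by Remark \ref{lemdual2}, but it is brought into it by writing $z.\sigma(v)=\tfrac12\bigl(z.\sigma(v)+\sigma(z.v)\bigr)+\tfrac12(z.\sigma)(v)$ and using condition $(a)$ to rewrite each adjoint term $(z_k.\sigma_j)$ as a combination of the $\theta_I^T$. The residual symmetric part $z.\sigma(v)+\sigma(z.v)=2z.\sigma(v)-(z.\sigma)(v)$ is exactly the combination appearing in Remark \ref{lemdual2}, so applying that remark produces, for each $\beta$, a dual relation among the $(\theta_I^T)^*$. It then remains to check that these dual relations are precisely the coefficients of $t\Psi(f)=0$ in the same basis.

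The matching in this last step is where I expect the main obstacle to lie. The sign $(-1)^{\ell(T)}$ carried by $\Psi$ must be tracked against the two mechanisms above: converting $\partial_5$ into $d_{45}$ lowers $\ell(T)$ by one and flips the sign, whereas deleting a $d_{ij}$ leaves $\ell(T)$ unchanged, and the $L_0$-element $z$ released in the latter, when commuted past the surviving factors, redistributes terms among several basis vectors. Reconciling these sign changes, together with the factor $\tfrac12$ introduced by the symmetrization, against the weights $(-1)^{\ell(T)}$ built into $\Psi$ is the delicate part, and it is precisely here that the detailed combinatorics of $\omega_I$ — its $B_d$-antisymmetry and the action formula of Theorem \ref{act} — is needed. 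I would carry this bookkeeping out explicitly for $d=1,2,3$, where $(U_-)_{d-1}$ has few basis vectors and the $L_0$-elements released by $t$ act on the relevant $\theta_I^T$ in a controlled way; the general statement remains the conjecture, whose conceptual explanation is the self-duality of Verma modules under the conformal dual.
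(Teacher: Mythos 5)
Your plan coincides with the paper's own treatment: the statement is posed there as a conjecture and is verified only in degree at most $3$, precisely by deriving basis-coefficient characterizations of $x_pd_Q\Phi(v)=0$ (the degree-one equation \eqref{equazione1}, Theorem \ref{equationdeg2}, and Theorem \ref{grado3}, computed via Lemma \ref{diquaedila}) whose equations already have the self-dual shape of Remark \ref{lemdual2}, so that applying that remark immediately yields Corollaries \ref{degree2dual} and \ref{degree3dual}, with the sign $(-1)^{\ell(T)}$ emerging exactly as you predict (the plain $\theta^t$-terms and the symmetrized $\theta_{I,J}$-terms carry opposite parities of $\ell(T)$, up to multiplying each equation by $-1$). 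Your $L_0$-equivariance argument for condition (a), your identification of the two bracket mechanisms of $x_5d_{45}$, and your decision to carry out the bookkeeping only for $d\leq 3$ while leaving the general case as a conjecture all match the paper, so the proposal is correct and essentially the same approach.
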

In the following sections we will verify Conjecture \ref{conjdual} for morphisms of degree at most 3 as a straightforward application of Proposition \ref{lemdual}.

\begin{definition}
 Let $\varphi:M(\lambda)\rightarrow M(\mu)$ be a morphism of Verma modules. The weight $\mu-\lambda$ is called the leading weight of $\varphi$.
\end{definition}
The reason of the terminology in the previous definition is motivated by the following observation.
\begin{remark}
Let $\varphi:M(\lambda)\rightarrow M(\mu)$ be a morphism of Verma modules of leading weight $\nu$. If $\varphi$ is associated to $\Phi=\sum_{i}u_i\otimes \theta_i$, where $\{u_i ~|~ i\in I\}$ is a basis of $(U_-)_d$ consisting of weight vectors, let $\theta_{i_0}$ be  of maximal weight such that $\theta_{i_0}(s)\neq 0$ for a highest weight vector $s\in F(\lambda)$. Then $\theta_{i_0}(s)$ is a highest weight vector in $F(\mu)$ and so the weight of $\theta_{i_0}$ is the leading weight of $\varphi$. Therefore if $\varphi$ has leading weight $\nu$ the leading
term of the singular vector $\varphi(1\otimes s)$ is 
$$\sum_{i: \lambda(\theta_i)=\nu}u_i\otimes \theta_i(s).$$
We also say that $\theta\in \Hom(V,W)$ has the leading weight of $\varphi$ if $\theta(s)\neq 0$ and the weight of $\theta$ is $\nu$. A general strategy to study a morphism $\varphi:M(V)\rightarrow M(W)$ is to understand elements $\theta\in \Hom(V,W)$ which have the leading weight of $\varphi$; in particular we will often show that there is no such morphism by showing that there is no $\theta\in \Hom(V,W)$ that may possibly have the leading weight of a morphism.
\end{remark}

Whenever Conjecture \ref{conjdual} holds the next result allows us to simplify the classification of morphisms.

\begin{remark}
 Let $\varphi:M(V)\rightarrow M(W)$ and $\psi:M(W^*)\rightarrow M(V^*)$ be morphisms of Verma modules and let $\nu=(a,b,c,d)$ be the leading weight of $\varphi$. Then the leading weight of $\psi$ is $-\nu^*=-(d,c,b,a)$.
\end{remark}

\section{Morphisms of degree one}\label{S7}
In this section we classify morphisms of degree one between generalized Verma modules, slightly simplifying Rudakov's argument \cite{R}.

We let $C(a,b,c)$ be the set of cyclic permutations of $a,b,c$, i.e., $C(a,b,c)=\{(a,b,c), (b,c,a)$, $(c,a,b)\}$. 

\begin{theorem}
Let $\varphi:M(V)\rightarrow M(W)$ be a linear map of degree one associated to
$$\Phi=\sum_{I\in{\mathcal I}_1/B_1}\omega_I\otimes \theta_I$$
such that $L_0.\Phi=0$. Then $\varphi$ is a morphism of Verma modules if and only if 
for all distinct $a,b,c,p\in [5]$ and for all $v\in V$ we have
\begin{equation}
\sum_{(\alpha,\beta,\gamma)\in C(a,b,c)}
x_p\partial_\gamma. (\theta_{\alpha\beta}(v))=0.
\label{equazione1}
\end{equation}
\end{theorem}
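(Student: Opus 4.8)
The plan is to reduce everything to condition (b) of Proposition \ref{morphisms}: since $L_0.\Phi=0$ is assumed, $\varphi$ is a morphism of $L$-modules if and only if $t\,\Phi(v)=0$ for all $t\in L_1$ and all $v\in V$. In degree one $\textrm{SIF}_1=\{\emptyset\}$, so $\omega_{(i,j)}=d_{ij}$ and $\omega_{(j,i)}=-\omega_{(i,j)}$; writing $\theta_{ij}=\theta_{(i,j)}$ and extending by $\theta_{ji}=-\theta_{ij}$, I may regard $\Phi(v)=\sum_{i<j}d_{ij}\otimes\theta_{ij}(v)$. The whole proof then consists in making condition (b) explicit.

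First I would evaluate (b) on the special elements $t=x_pd_{pq}\in L_1$ (these are closed two-forms since $p\in\{p,q\}$). Because $L_+$ acts trivially on $W$, for $w\in W$ one has $x_pd_{pq}\cdot(d_{ij}\otimes w)=[x_pd_{pq},d_{ij}]\otimes w$, and by the bracket formula of Section \ref{S1}, $[x_pd_{pq},d_{ij}]=\varepsilon_{pqij}\,x_p\partial_{t_{pqij}}\in L_0$, giving
\[
x_pd_{pq}\cdot\Phi(v)=\sum_{i<j}\varepsilon_{pqij}\,x_p\partial_{t_{pqij}}.\theta_{ij}(v).
\]
This is nonzero only when $\{p,q,i,j\}$ are four distinct indices, i.e. when $\{i,j\}$ is a two-subset of $\{a,b,c\}:=[5]\setminus\{p,q\}$, in which case $t_{pqij}$ is the remaining element of $\{a,b,c\}$. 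The three surviving terms are exactly those in the cyclic sum of \eqref{equazione1}; the key computation is the sign matching $\varepsilon_{pqab}=\varepsilon_{pqbc}=-\varepsilon_{pqac}$ (comparing the permutations $(p,q,a,b,c)$, $(p,q,b,c,a)$, $(p,q,a,c,b)$, which differ by an even $3$-cycle and an odd transposition respectively), so that after using $\theta_{ca}=-\theta_{ac}$ one gets
\[
x_pd_{pq}\cdot\Phi(v)=\pm\sum_{(\alpha,\beta,\gamma)\in C(a,b,c)}x_p\partial_\gamma.\theta_{\alpha\beta}(v).
\]
Hence condition (b) for the single element $x_pd_{pq}$ is equivalent to \eqref{equazione1} for the distinct indices $a,b,c,p$ (with $q$ the fifth index).

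This at once yields the direction ``$\varphi$ morphism $\Rightarrow$ \eqref{equazione1}'': if (b) holds for all $t\in L_1$ it holds in particular for each $x_pd_{pq}$, giving \eqref{equazione1} for every choice of distinct $a,b,c,p$. For the converse I would \emph{not} argue by linearity in $t$, since the twenty elements $x_pd_{pq}$ do not span the $40$-dimensional module $L_1$; this is the point I expect to be the real content. Instead, I would show that $N=\{t\in L_1: t\,\Phi(v)=0\ \forall v\}$ is an $L_0$-submodule of $L_1$: using that $L_0.\Phi=0$ gives $z\cdot\Phi(v)=\Phi(z.v)$ for $z\in L_0$, one computes $[z,t]\cdot\Phi(v)=z.(t\cdot\Phi(v))-t\cdot\Phi(z.v)$, which vanishes for $t\in N$. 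Since $L_1\cong F(1,1,0,0)$ is irreducible and $x_5d_{45}$ is a lowest weight vector, assuming \eqref{equazione1} for $(a,b,c,p)=(1,2,3,5)$ places the generator $x_5d_{45}$ in $N$, forcing $N=L_1$, so (b) holds on all of $L_1$ and $\varphi$ is a morphism by Proposition \ref{morphisms}. The two main obstacles are thus the sign bookkeeping identifying the bracket computation with the cyclic sum \eqref{equazione1}, and the recognition that the converse must be closed under the $L_0$-action rather than established by a spanning argument.
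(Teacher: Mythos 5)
Your proof is correct and takes essentially the same route as the paper: the same reduction via Proposition \ref{morphisms} and the same bracket computation yielding $x_pd_{pq}\Phi(v)=\varepsilon_{pqabc}\sum_{(\alpha,\beta,\gamma)\in C(a,b,c)}x_p\partial_\gamma.(\theta_{\alpha\beta}(v))$, with your sign bookkeeping matching the paper's. The $L_0$-submodule/irreducibility argument you spell out for the converse is exactly what the paper leaves implicit in the remark following Proposition \ref{morphisms}, namely that once condition (a) holds it suffices to verify condition (b) on a single $L_0$-generator of the irreducible module $L_1\cong F(1,1,0,0)$, such as the lowest weight vector $x_5d_{45}$.
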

\begin{proof} 
By Proposition \ref{morphisms} it is enough to check when $x_pd_{pq}\Phi(v)=0$ for all $p,q\in [5]$.
For notational convenience we let $Q=(p,q)$ and $\{a,b,c,p,q\}=[5]$.
We have:
\begin{align*}
x_pd_Q\Phi(v)&=x_pd_Q\sum_{I\in{\mathcal I}_1/B_1}\omega_I\otimes \theta_I(v)=x_pd_Q\sum_{I\in{\mathcal I}_1/B_1}d_I\otimes \theta_I(v)\\
& =\sum_{I\in{\mathcal I}_1/B_1}\varepsilon_{Q,I}x_p\partial_{t_{Q,I}}.(\theta_I(v))=
\varepsilon_{pqabc}\sum_{(\alpha,\beta,\gamma)\in C(a,b,c)}
x_p\partial_\gamma. (\theta_{\alpha\beta}(v)).
\end{align*}
\end{proof}
\begin{remark} We point out that Equation (\ref{equazione1}) satisfies the hypotheses of Proposition \ref{lemdual} since in this case
\[x_p\partial_{\gamma}.(\theta_{\alpha\beta}(v))=\theta_{\alpha\beta}(x_p\partial_{\gamma}.v)\]
hence we can write
\[x_p\partial_{\gamma}.(\theta_{\alpha\beta}(v))=\frac{1}{2}\big(x_p\partial_{\gamma}.(\theta_{\alpha\beta}(v))+\theta_{\alpha\beta}(x_p\partial_{\gamma}.v)\big).\]
Conjecture \ref{conjdual} then holds in degree one. This will be also confirmed by Theorem \ref{gradouno}. 
\end{remark}

\begin{proposition}\label{pesigrado1} Let $\varphi: M(\lambda)\rightarrow M(\mu)$ be a morphism of Verma modules of degree one and let
$\theta_{hk}$ have the leading weight of $\varphi$. Then if $i<j$ are distinct from $h,k$ we have
\[
\mu_{ij}=-\chi_{i<h<j}-\chi_{i<k<j}.
\]
\end{proposition}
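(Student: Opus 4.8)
The plan is to extract weight information from the leading term of the singular vector $\varphi(1\otimes s)$, using the structure of the morphism in degree one together with the combinatorics of the dominance order developed in Section~\ref{S6}. Recall that $\varphi$ is associated to $\Phi=\sum_{I}\omega_I\otimes\theta_I=\sum_I d_I\otimes\theta_I$ (in degree one $\omega_I=d_I$), and that $\theta_{hk}$ realizes the leading weight means $\theta_{hk}(s)\neq 0$ is a highest weight vector in $F(\mu)$, where $s$ is a highest weight vector in $F(\lambda)$. Thus the leading term of the singular vector is $d_{hk}\otimes\theta_{hk}(s)$, and the weight of $\theta_{hk}(s)$ is exactly $\mu$, the highest weight of $F(\mu)$.

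First I would compute the weight of the whole leading term $d_{hk}\otimes\theta_{hk}(s)$ and use that it must equal $\lambda$ (since $\varphi(1\otimes s)$ has the same weight as $1\otimes s$, namely $\lambda$). The weight of $d_{hk}$ as an $L_0$-vector is recorded in Section~\ref{S6}: for $i<j$ one has $\lambda_{ij}(d_{hk})=m_i(h,k)-m_j(h,k)$, the number of occurrences of $i$ minus the number of occurrences of $j$ in the pair $\{h,k\}$. Hence for $i<j$ both distinct from $h$ and $k$, the index $i$ and the index $j$ each occur zero times in $\{h,k\}$, so $\lambda_{ij}(d_{hk})=0$. This is the key simplification: $d_{hk}$ contributes nothing to the $(i,j)$-component of the weight when $i,j\notin\{h,k\}$.

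The central computation is then to relate $\mu_{ij}=\lambda_{ij}(\theta_{hk}(s))$ to $\lambda_{ij}=\lambda_{ij}(s)$ via the relation $\lambda(\theta_{hk}(s))=\lambda(s)-\lambda(d_{hk})$ coming from $\lambda(d_{hk}\otimes\theta_{hk}(s))=\lambda$, i.e. $\lambda(\theta_{hk})=-\lambda(d_{hk})$ as a weight of $\Hom(V,W)$ (consistent with the $L_0$-invariance $L_0.\Phi=0$). Evaluating the $(i,j)$-component for $i<j$ distinct from $h,k$, the contribution from $d_{hk}$ vanishes by the previous paragraph, so I expect $\mu_{ij}=\lambda_{ij}$. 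But the claimed formula asserts $\mu_{ij}=-\chi_{i<h<j}-\chi_{i<k<j}$, which is purely combinatorial and independent of $\lambda$. The reconciliation must come from the fact that $\theta_{hk}(s)$ is a \emph{highest} weight vector, together with the precise identification of the weight $\mu$ as a function of the positions of $h$ and $k$ relative to $i$ and $j$: the terms $\chi_{i<h<j}$ and $\chi_{i<k<j}$ count exactly how the simple-root decomposition of $\lambda(d_{hk})$ redistributes weight across the interval $[i,j]$, using $\lambda_{ij}(d_{hk})=\sum_{k=i}^{j-1}\lambda_{k,k+1}(d_{hk})$ and the explicit values $\lambda_{l,l+1}(d_{hk})$ which are nonzero precisely when $l$ or $l+1$ equals $h$ or $k$.

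The main obstacle I anticipate is that the naive weight bookkeeping gives $\mu_{ij}=\lambda_{ij}$, which does not visibly match the target formula; the resolution requires carefully tracking the \emph{full} weight $\mu$ rather than only the convention-dependent fundamental-weight coordinates, and recognizing that $\lambda_{ij}$ for the \emph{specific} $\lambda$ admitting such a degree-one morphism is forced by the constraints from Equation~(\ref{equazione1}) and the classification of which $\theta_{hk}$ can carry the leading weight. Concretely, I would invoke the explicit description of the action of $L_0$ on the $\theta_I$ (Remark~\ref{dual} and Remark~\ref{thetaction}) to pin down the admissible weights, and then verify directly that the only consistent solution is $\mu_{ij}=-\chi_{i<h<j}-\chi_{i<k<j}$. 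I expect the verification itself to be a short case analysis on the relative order of $i,j,h,k$ in $[5]$, checking the four cases according to whether $h$ and $k$ lie inside or outside the open interval $(i,j)$, and confirming that the formula reproduces the correct value of $\mu_{ij}$ in each case.
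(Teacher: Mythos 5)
There is a genuine gap here, and you have in fact located it yourself without closing it. Your weight bookkeeping is correct but tautological: condition (a) of Proposition \ref{morphisms} gives $\mu=\lambda-\lambda(d_{hk})$, and since $\lambda_{ij}(d_{hk})=0$ whenever $i,j\notin\{h,k\}$, all you obtain is $\mu_{ij}=\lambda_{ij}$, which says nothing about the actual value. The content of the proposition is that $\mu_{ij}$ is forced to a specific $\lambda$-independent value, and this can only come from condition (b), i.e.\ from Equation (\ref{equazione1}); your outline gestures at that equation but never extracts anything from it. Worse, the escape route you propose --- invoking ``the classification of which $\theta_{hk}$ can carry the leading weight'' and then verifying the formula case by case --- is circular: that classification is Theorem \ref{gradouno}, whose proof in the paper \emph{rests on} Proposition \ref{pesigrado1}. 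And your heuristic that the terms $\chi_{i<h<j}$, $\chi_{i<k<j}$ record how ``the simple-root decomposition of $\lambda(d_{hk})$ redistributes weight across $[i,j]$'' is not correct: $\lambda_{ij}(d_{hk})$ vanishes identically in the relevant range, so these terms cannot come from $\lambda(d_{hk})$ at all.

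What is actually needed (and what the paper does in a few lines) is the following. Specialize Equation (\ref{equazione1}) to $p=j$, $(a,b,c)=(h,k,i)$, $v=s$, which yields
\[
x_j\partial_i.(\theta_{hk}(s))+x_j\partial_k.(\theta_{ih}(s))+x_j\partial_h.(\theta_{ki}(s))=0,
\]
and then apply the raising operator $x_i\partial_j$. Since $\theta_{hk}(s)$ is a highest weight vector, the first term contributes exactly the commutator $h_{ij}.(\theta_{hk}(s))$; in the remaining terms, the maximality of the leading weight (via Corollary \ref{dominancetheta} and the explicit $L_0$-action on the $\theta$'s from Remark \ref{dual}) kills the stray contributions, and the commutators $[x_i\partial_j,x_j\partial_k]=x_i\partial_k$ and $[x_i\partial_j,x_j\partial_h]=x_i\partial_h$ acting on $\theta_{ih}(s)$ and $\theta_{ki}(s)$ produce precisely the terms $-\chi_{i<k<j}\theta_{kh}(s)-\chi_{i<h<j}\theta_{kh}(s)$. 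Reading off the $h_{ij}$-eigenvalue of the highest weight vector $\theta_{hk}(s)$ gives $\mu_{ij}=-\chi_{i<h<j}-\chi_{i<k<j}$. It is this computation --- deriving an eigenvalue identity from the singular-vector equation, not bookkeeping on $\Phi$ --- that your proposal is missing.
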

\begin{proof}
Consider Equation (\ref{equazione1}) with $p=j$, $c=i$, $a=h$, $b=k$ and $v=s$ a highest weight vector in $F(\lambda)$: 
\[
x_j\partial _i.(\theta_{hk}(s))+x_j\partial_k.(\theta_{ih}(s))+x_j\partial_h.(\theta_{ki}(s))=0.
\]
Now we apply $x_i\partial_j$ to this equation. We have 
\begin{align*}
h_{ij}.(\theta_{hk}(s))-\chi_{i<k<j}\theta_{kh}(s)-\chi_{i<h<j}\theta_{kh}(s)=0
\end{align*}
and the result follows.
\end{proof}
\begin{theorem}\label{gradouno}
 Let $\varphi: M(\lambda)\rightarrow M(\mu)$ be a morphism of Verma modules of degree one. 
Then one of the following occurs:
\begin{itemize}
\item 
$\lambda=(m,n+1,0,0)$, $\mu=(m,n,0,0)$ for some $m,n\geq 0$ and, up to a scalar, $\varphi=\nabla_A$.
\item 
$\lambda=(m+1,0,0,n)$, $\mu=(m,0,0,n+1)$ for some $m,n\geq 0$ and, up to a scalar, $\varphi=\nabla_B$.
\item 
$\lambda=(0,0,m,n)$, $\mu=(0,0,m+1,n)$ for some $m,n\geq 0$  and, up to a scalar, $\varphi=\nabla_C$.
\end{itemize}
\end{theorem}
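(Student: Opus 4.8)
The plan is to classify all degree-one morphisms $\varphi:M(\lambda)\to M(\mu)$ by combining the structural constraint from Proposition \ref{pesigrado1} with the defining Equation \eqref{equazione1}, and then to match the surviving cases against the known examples $\nabla_A,\nabla_B,\nabla_C$ from Examples \ref{nablaA}, \ref{nablaB}, \ref{nablaC}. First I would fix $\theta_{hk}$ to be the component having the leading weight of $\varphi$; since $\theta_{hk}(s)$ is a highest weight vector in $F(\mu)$, the weight $\mu$ is heavily constrained. Proposition \ref{pesigrado1} already computes $\mu_{ij}=-\chi_{i<h<j}-\chi_{i<k<j}$ for all $i<j$ disjoint from $\{h,k\}$; since $\mu$ is dominant, each such $\mu_{ij}\geq 0$, and the formula forces these entries to be nonpositive, hence zero. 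This immediately shows that no index strictly between $h$ and $k$ can exist unless it is one of $h,k$, i.e.\ that $\{h,k\}$ must be of a very restricted shape, namely $\{1,2\}$, $\{1,5\}$, or $\{4,5\}$ (the pairs for which no ``interior'' pair $(i,j)$ disjoint from $\{h,k\}$ creates a forced negative entry). These three pairs are exactly the leading terms of $\nabla_A$, $\nabla_B$, $\nabla_C$ respectively, as recorded in the Examples.

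Next, for each admissible pair $\{h,k\}$ I would determine the remaining coordinates of $\mu$, i.e.\ the entries $\mu_{ij}$ where $\{i,j\}$ meets $\{h,k\}$. The idea is to exploit the full force of Equation \eqref{equazione1}: applying it with various choices of the free index $p$ and of the cyclic triple, then acting by suitable raising operators $x_i\partial_j$ as in the proof of Proposition \ref{pesigrado1}, yields a system of linear relations among the $\theta_{\alpha\beta}(s)$ and pins down the weights $\mu_{h-1\,h},\mu_{h\,h+1}$, etc. In each of the three cases this should force $\mu$ into exactly the families $(m,n,0,0)$, $(m,0,0,n+1)$, $(0,0,m+1,n)$ appearing in the statement, with $\lambda=\mu+\nu$ where $\nu$ is the corresponding leading weight $-\lambda(\theta_{hk})$. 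Since $\lambda(d_{hk})$ is explicit (it is $\alpha$-computable from the formula $\lambda_{ij}(d_{kl})=m_i(k,l)-m_j(k,l)$ in Section \ref{S6}), matching $\lambda$ to $(m,n+1,0,0)$, $(m+1,0,0,n)$, $(0,0,m,n)$ is then a direct weight computation.

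Finally, having identified the source and target in each case, I would invoke Proposition \ref{leading}(ii): a singular vector is determined by its leading term. Since $\varphi(1\otimes s)$ is a singular vector whose leading term is (up to scalar) $d_{hk}\otimes\theta_{hk}(s)$, and the constructed morphisms $\nabla_A,\nabla_B,\nabla_C$ have exactly these leading terms, uniqueness forces $\varphi$ to be a scalar multiple of the corresponding $\nabla$. This uses that $\theta_{hk}(s)$, being a highest weight vector of the correct weight in the irreducible module $F(\mu)$, is determined up to scalar, so the leading term itself is determined up to scalar.

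The main obstacle I anticipate is the bookkeeping in the second step: showing that the \emph{remaining} weight coordinates are forced, rather than merely the ones disjoint from $\{h,k\}$ handled by Proposition \ref{pesigrado1}. For a pair like $\{1,5\}$ this is delicate because $1$ and $5$ are the extreme indices and the relevant instances of Equation \eqref{equazione1} couple together several $\theta_{\alpha\beta}$; one must choose the cyclic triples and the operators $x_i\partial_j$ carefully to isolate each coordinate of $\mu$ and to rule out any morphism into a target outside the three listed families. In particular, I expect to need the dominance-order comparisons of Corollary \ref{dominancetheta} to guarantee that the chosen $\theta_{hk}$ really is of maximal weight and that no other leading term is possible, thereby ensuring the case analysis on $\{h,k\}$ is exhaustive.
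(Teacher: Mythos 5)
Your proposal is correct and follows essentially the same route as the paper: use Proposition \ref{pesigrado1} together with dominance of $\mu$ to force $(h,k)\in\{(1,2),(1,5),(4,5)\}$ and to place $\mu$ in the three families, then determine $\lambda$ from $\lambda(\theta_{hk})=-\lambda(d_{hk})$, and conclude uniqueness up to scalar via Proposition \ref{leading} (the paper also cites Lemma \ref{esempi}) and existence via Examples \ref{nablaA}, \ref{nablaB}, \ref{nablaC}. The only remark worth making is that the ``delicate second step'' you anticipate is in fact unnecessary: Proposition \ref{pesigrado1} applied to the pairs $i<j$ disjoint from $\{h,k\}$ already yields $\mu_{35}=0$ (resp.\ $\mu_{24}=0$, $\mu_{13}=0$), hence all the vanishing coordinates, while the remaining coordinates are genuinely free parameters $m,n$ (they are not pinned down, consistently with the families in the statement) and the $\pm1$ shifts in $\lambda$ come from dominance of $\lambda$ alone, so no further instances of Equation \eqref{equazione1} are needed.
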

\begin{proof} Let $\theta_{hk}$ have the leading weight of $\varphi$.
By Proposition \ref{pesigrado1} we have that if $(h,k)\neq (1,2),(1,5),(4,5)$ we can find $i,j$ such that $\mu_{i,j}<0$, a contradiction.
Proposition \ref{pesigrado1} also provides
\begin{itemize}
\item $\mu_{3,5}=0$ if $(h,k)=(1,2)$;
\item $\mu_{2,4}=0$ if $(h,k)=(1,5)$;
\item $\mu_{1,3}=0$ if $(h,k)=(4,5)$,
\end{itemize}
and the rest follows using Lemma \ref{esempi} and Proposition \ref{leading} recalling that $\lambda(\theta_{hk})=-\lambda(d_{hk})$.
\end{proof}

\section{Morphisms of degree 2}\label{eight}
In this section we provide a complete classification of morphisms between Verma modules of degree 2. We will make use of the following preliminary result which holds in a much wider generality.
Here and in what follows we denote by $(p,q,a,b,c)$ any permutation of $[5]$ and we set $Q=(p,q)$.
\begin{lemma} \label{diquaedila} 
Suppose that $\Phi=\sum_{T,I}\partial_T\omega_I\otimes\theta^T_I$ defines a morphism of Verma modules $\varphi: M(V)\rightarrow M(W)$. Then for all $t_1\dots t_h\in [5]$,
$I_1,\ldots,I_k\in \mathcal I_1$ and $v\in V$ we have
\begin{align*}
 &\sum_{I,J_1,\ldots,J_r\in\mathcal{I}_1}\varepsilon_{Q,I}x_p\partial_{t_{Q,I}}d_{J_1}\cdots d_{J_r}\otimes \theta^{t_1,\ldots,t_h}_{I_1,\ldots,I_k,I,J_1,\ldots,J_r}(v)
 =2\sum_{\substack{(\alpha,\beta, \gamma)\in C(a,b,c) \\ H_1,\ldots,H_r\in\mathcal{I}_1}}\varepsilon_{pqabc}d_{H_1} \cdots d_{H_r}\otimes\\
& \Big(\theta^{t_1,\ldots,t_h}_{I_1,\ldots,I_k,\alpha\beta,H_1,\ldots,H_r}(x_p\partial_\gamma.v)+\sum_{s=1}^h \Delta_{p\rightarrow\gamma}^s\theta^{t_1,\ldots,t_h}_{I_1,\ldots,I_k,\alpha\beta,H_1,\ldots,H_r}(v)-\sum_{s=1}^kD_{\gamma\rightarrow p}^s\theta^{t_1,\ldots,t_h}_{I_1,\ldots,I_k,\alpha\beta,H_1,\ldots,H_r}(v)\Big)
\end{align*}
\end{lemma}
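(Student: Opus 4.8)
The statement is a reorganization of the defining identity of a morphism, so the first thing I would do is unwind that identity. By Proposition \ref{morphisms} the morphism hypothesis provides both $L_0.\Phi=0$ and $t\,\Phi(v)=0$ for all $t\in L_1$ and all $v\in V$; I would apply the second condition to the generator $t=x_pd_Q$. Writing $\Phi(v)=\sum_{T,I}\partial_T\omega_I\otimes\theta^T_I(v)$ and using that an element of $L_1$ acts on a pure tensor by $x_pd_Q\cdot(u\otimes w)=[x_pd_Q,u]\otimes w$ for $u\in U_-$ (because $L_+$ kills $W$), the whole computation reduces to evaluating the brackets $[x_pd_Q,\partial_T\omega_I]$, where $\mathrm{ad}(x_pd_Q)$ acts as a derivation on each monomial $\partial_T\omega_I\in U_-$. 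Everything then rests on the two elementary brackets coming from the formula of Section \ref{S1}:
\[
[x_pd_{pq},d_{ij}]=\varepsilon_{pq\,ij}\,x_p\partial_{t_{pq\,ij}}\in L_0,\qquad [x_pd_{pq},\partial_t]=-\delta_{t,p}\,d_{pq}\in L_{-1}.
\]

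Next I would separate the derivation into the sum of the brackets hitting a $d$-factor and the sum of the brackets hitting a $\partial$-factor. The former produce the $L_0$-elements $\varepsilon_{Q,I}\,x_p\partial_{t_{Q,I}}$ that appear on the left-hand side: singling out the bracketed factor as $I=\alpha\beta$ and letting it range over $\mathcal I_1$ produces the sum over $I$ together with the untouched spectators $d_{J_1}\cdots d_{J_r}$ and the remaining $\theta^{t_1,\ldots,t_h}_{I_1,\ldots,I_k,I,J_1,\ldots,J_r}(v)$. The right-hand side gathers the same contributions in normalized form: pushing the $L_0$-factors $x_p\partial_{t_{Q,I}}=x_p\partial_\gamma$ to the right of the $U_-$-word, I would use Corollary \ref{omegaction} to commute $x_p\partial_\gamma$ across the $\omega$-part and Remark \ref{thetaction} to transfer its action onto the functions $\theta^T_I$, and then invoke $x_pd_Q\Phi(v)=0$ to move the terms across. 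This normalization is exactly what manufactures the operator $\theta(x_p\partial_\gamma.v)+\sum_{s=1}^h\Delta^s_{p\rightarrow\gamma}-\sum_{s=1}^kD^s_{\gamma\rightarrow p}$ acting on the spectator indices. The cyclic sum and the sign come from the fifth-index combinatorics: $[x_pd_Q,d_{\alpha\beta}]\neq 0$ forces $\{p,q,\alpha,\beta\}$ to have four elements, so $t_{Q,(\alpha,\beta)}=\gamma$ with $\{p,q,\alpha,\beta,\gamma\}=[5]$, and letting $\{\alpha,\beta\}$ run over the three $2$-subsets of $\{a,b,c\}$ with the induced orientation yields $\sum_{(\alpha,\beta,\gamma)\in C(a,b,c)}$ carrying the common sign $\varepsilon_{pqabc}$; the factor $2$ reflects the $\tfrac12$ in the definition of $D_{\{k,l\}}$ together with the $B_d$-antisymmetry of $\omega_I$.

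The step I expect to be the main obstacle is the sign and reordering bookkeeping. Under the $B_{d-2k}$-antisymmetry of both $\omega_I$ and $\theta^T_I$ one must keep the $\varepsilon$-signs consistent while singling out and re-indexing the bracketed factor, and---most delicately---one must check that when $x_p\partial_\gamma$ is pushed to the right all the shift contributions attached to the \emph{active} indices $\alpha\beta$ and $H_1,\ldots,H_r$ cancel, leaving precisely the spectator shifts $\sum_{s=1}^h\Delta^s_{p\rightarrow\gamma}$ and $\sum_{s=1}^kD^s_{\gamma\rightarrow p}$ displayed in the statement. Once this termwise matching is verified the identity follows. I would also point out that the symmetric shape of the right-hand side is engineered so that each summand meets the hypotheses of Proposition \ref{lemdual} in the form of Remark \ref{lemdual2}, which is the reason the lemma is phrased this way and the use to which it will be put.
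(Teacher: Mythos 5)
Your normalization machinery (extracting the cyclic sum over $(\alpha,\beta,\gamma)\in C(a,b,c)$ with the common sign $\varepsilon_{pqabc}$, pushing the $L_0$-factor to the right, and transferring its action to the $\theta$'s via Remark \ref{thetaction}) is the paper's mechanism, but your logical frame contains a genuine error: you propose to prove the lemma by applying condition (b) of Proposition \ref{morphisms} to $t=x_pd_Q$ and then to ``invoke $x_pd_Q\Phi(v)=0$ to move the terms across.'' The paper's proof never uses condition (b). The lemma is a \emph{termwise} algebraic identity, valid separately for every fixed choice of spectator indices $t_1,\ldots,t_h$ and $I_1,\ldots,I_k$, and its only input beyond elementary commutators is the $L_0$-equivariance of the assignment $(T,I)\mapsto\theta^T_I$ coming from Remark \ref{dual}, i.e.\ the formula of Remark \ref{thetaction} --- which requires only condition (a). The single relation $x_pd_Q\Phi(v)=0$ mixes all blocks with different spectators and cannot deliver the blockwise identity; worse, using it would be circular, since Lemma \ref{diquaedila} is applied in Theorems \ref{equationdeg2} and \ref{grado3} to \emph{compute} $x_pd_Q\Phi(v)$ under the sole hypothesis $x.\Phi=0$ for $x\in L_0$ and thereby characterize when $\varphi$ is a morphism, including the ``if'' direction where (b) is not available. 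Note also that $x_pd_Q$ does not occur in the identity at all: the bracket $[x_pd_Q,d_I]=\varepsilon_{Q,I}\,x_p\partial_{t_{Q,I}}$ has already been taken in the left-hand side, and what remains is pure normalization of $\varepsilon_{Q,I}\,x_p\partial_{t_{Q,I}}d_{J_1}\cdots d_{J_r}\otimes\theta(v)$; your opening bracket computations belong to the lemma's \emph{applications}, not to the lemma.

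The correct route, which your middle paragraph gestures at but leaves unexecuted, is: (1) $\varepsilon_{Q,I}\neq 0$ forces $I=(\alpha,\beta)$ with $\{\alpha,\beta\}\subset\{a,b,c\}$ and $t_{Q,I}=\gamma$; the two orderings of each pair contribute equally (both $\varepsilon_{Q,I}$ and $\theta$ change sign), which is the true source of the factor $2$ --- not the $\tfrac12$ in $D_{\{k,l\}}$, since no $\omega$'s occur on the left-hand side; for the same reason Corollary \ref{omegaction} is not the relevant tool, only $[x_p\partial_\gamma,d_{ij}]=\delta_{\gamma i}d_{pj}+\delta_{\gamma j}d_{ip}$. (2) Commuting $x_p\partial_\gamma$ rightwards across $d_{J_1}\cdots d_{J_r}$ and re-indexing the substitution terms turns them into $\sum_{l}D^{l}_{\gamma\rightarrow p}$ acting on the $H$-slots of $\theta$, plus $d_{H_1}\cdots d_{H_r}\otimes x_p\partial_\gamma.(\theta(v))$. (3) Remark \ref{thetaction} gives $(x_p\partial_\gamma.\theta)(v)=\sum_{s=1}^h\Delta^s_{p\rightarrow\gamma}\theta(v)-\sum_{l}D^{l}_{\gamma\rightarrow p}\theta(v)$ summed over \emph{all} lower slots, together with $(x_p\partial_\gamma.\theta)(v)=x_p\partial_\gamma.(\theta(v))-\theta(x_p\partial_\gamma.v)$; the shift on the active slot $\alpha\beta$ vanishes outright because $p\notin\{\alpha,\beta\}$ (there is no cancellation to arrange there), while the shifts on the $H$-slots cancel exactly against those produced in step (2), leaving precisely $\theta(x_p\partial_\gamma.v)+\sum_{s=1}^h\Delta^s_{p\rightarrow\gamma}\theta(v)-\sum_{s=1}^kD^s_{\gamma\rightarrow p}\theta(v)$. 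This termwise matching is the entire content of the lemma; your proposal flags it as ``the main obstacle'' and stops, so even setting aside the misuse of condition (b), the proof is incomplete at its core.
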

\begin{proof}
Using  the definitions of $D_{a\rightarrow b}^h$, of $\Delta_{a\rightarrow b}^h,$ of $\theta^{t_1,\ldots,t_h}_{I_1,\ldots,I_k}$ and of the action of $L_0$ on the latter elements, we have
 \begin{align*}
&\sum_{I,J_1,\ldots,J_r}\varepsilon_{Q,I}x_p\partial_{t_{Q,I}}d_{J_1}\cdots d_{J_r}\otimes \theta^{t_1,\ldots,t_h}_{I_1,\ldots,I_k,I,J_1,\ldots,J_r}(v)= -2\sum_{\substack{(\alpha,\beta, \gamma)
\in C(a,b,c) \\ H_1,\ldots,H_r\in\mathcal{I}_1}}\varepsilon_{pqabc}d_{H_1} \cdots d_{H_r}\otimes\\
 &\Big((x_p \partial_\gamma. \theta^{t_1,\ldots,t_h}_{I_1,\ldots,I_k,\alpha\beta,H_1,\ldots,H_r})(v)-\sum_{s=1}^h \Delta_{p\rightarrow\gamma}^s\theta^{t_1,\ldots,t_h}_{I_1,\ldots,I_k,\alpha\beta,H_1,\ldots,H_r}(v)+\sum_{s=1}^kD^s_{\gamma\rightarrow p}\theta^{t_1,\ldots,t_h}_{I_1,\ldots,I_k,\alpha\beta,H_1,\ldots,H_r}(v)\\
 &- x_p\partial_\gamma (\theta^{t_1,\ldots,t_h}_{I_1,\ldots,I_k,\alpha\beta,H_1,\ldots,H_r}(v))\Big)\\
\end{align*} 
from which the thesis follows.
\end{proof}

We are now ready to state the following characterization result.

\begin{theorem}\label{equationdeg2}
 Let $\varphi:M(V)\rightarrow M(W)$ be a linear map of degree 2 associated to 
 \[\Phi=\sum_{(I,J)\in \mathcal I_2/B_2}\omega_{I,J}\otimes \theta_{I,J}+\sum_{t=1}^5\partial_t\otimes \theta^t\]
 such that $x.\Phi=0$ for all $x\in L_0$. Then $\varphi$ is a morphism of Verma modules if and only if for all  $K\in \mathcal I_1$ and all $v\in V$ we have
 \[
  -\chi_{(K\in B_1 Q)}\theta^p(v)+\frac{1}{2}\varepsilon_{pqabc}\sum_{(\alpha \beta \gamma) \in C(a,b,c)}\Big(-\big((x_p\partial_\gamma).\theta_{\alpha\beta , K}\big)(v)+2x_p\partial_\gamma .(\theta_{\alpha \beta,K}(v))\Big)=0
 \]
 \end{theorem}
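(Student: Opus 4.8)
The plan is to reduce to condition $(b)$ of Proposition \ref{morphisms} and then compute directly. Since condition $(a)$ is assumed and $L_1$ is an irreducible $L_0$-module, $\varphi$ is a morphism if and only if $x_pd_Q\Phi(v)=0$ for every $v\in V$ and every permutation $(p,q,a,b,c)$ of $[5]$: the elements $x_pd_Q$ generate $L_1$, and the $L_0$-equivariance furnished by $(a)$ propagates the vanishing from one generator to all of $L_1$. Since $\Phi$ has degree $2$ and $x_pd_Q\in L_1$, the vector $x_pd_Q\Phi(v)$ lies in $(U_-)_1\otimes W$, hence vanishes if and only if the coefficient of each basis element $d_K$, $K\in\mathcal I_1$, vanishes; this coefficient is precisely what the asserted equation sets to zero. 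The whole proof therefore amounts to evaluating $x_pd_Q\Phi(v)$ and reading off the $d_K$-component.

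First I would dispose of the summand $\sum_t\partial_t\otimes\theta^t$. Writing $x_pd_Q\,\partial_t=[x_pd_Q,\partial_t]+\partial_t\,x_pd_Q$ and using that $x_pd_Q\in L_1$ annihilates $W$, only the bracket survives; computing the Lie derivative gives $[x_pd_Q,\partial_t]=-\delta_{tp}\,d_Q$, so this summand produces $-d_Q\otimes\theta^p(v)$, that is, the term $-\chi_{(K\in B_1Q)}\theta^p(v)$ of the statement.

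The heart of the matter is the summand $\sum_{(I,J)}\omega_{I,J}\otimes\theta_{I,J}(v)$. Here $x_pd_Q\cdot(\omega_{I,J}\otimes\theta_{I,J}(v))=[x_pd_Q,\omega_{I,J}]\otimes\theta_{I,J}(v)$, again because the rightmost occurrence of $x_pd_Q$ kills $W$. Using $\omega_{I,J}=d_Id_J-\tfrac12\varepsilon_{I,J}\partial_{t_{I,J}}$ (Definition \ref{defomega}) together with $[x_pd_Q,d_I]=\varepsilon_{Q,I}x_p\partial_{t_{Q,I}}$ and $[x_pd_Q,\partial_t]=-\delta_{tp}d_Q$, I would expand the bracket and commute every resulting $L_0$-element $x_p\partial_m$ to the right past the surviving $d$'s. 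Each such move splits into a genuine $L_0$-action $d_{\cdot}\otimes x_p\partial_\gamma.(\theta_{I,J}(v))$ on $\theta_{I,J}(v)$ and a reindexing term $[x_p\partial_m,d_J]\otimes\theta_{I,J}(v)$ redistributing the remaining $d$ among the $d_K$. Since $t_{Q,\alpha\beta}=\gamma$ is the fifth index, the admissible $I=(\alpha,\beta)$ organize into the cyclic sum over $C(a,b,c)$ and the structure constants collapse to $\varepsilon_{pq\alpha\beta}=\pm\varepsilon_{pqabc}$, exactly as in the degree one computation. Because $x_pd_Q$ may enter either $d$-slot of $d_Id_J$, the genuine $L_0$-actions occur twice and contribute $2x_p\partial_\gamma.(\theta_{\alpha\beta,K}(v))$, while the reindexing terms, read through the explicit $L_0$-action on the $\theta^T_I$ recorded in Remark \ref{thetaction}, assemble into $-((x_p\partial_\gamma).\theta_{\alpha\beta,K})(v)$; together with the overall factor $\tfrac12\varepsilon_{pqabc}$ this is the asserted equation. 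The reorganization of these leading terms is exactly the content of Lemma \ref{diquaedila} (taken with $h=k=0$ and $r=1$), which I would invoke to keep the combinatorics under control.

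I expect the main obstacle to be purely bookkeeping: tracking the signs $\varepsilon_{Q,I}$ and the indices $t_{Q,I}$, passing between the sum over $\mathcal I_2/B_2$ and the free sum over $\mathcal I_2$ (which is responsible for the factor $\tfrac12$ and the coefficient $2$), and—most delicately—checking that the spurious terms proportional to $d_Q$ coming from the $\partial$-part of $\omega_{I,J}$ cancel against exactly those reindexing terms that also land on $d_Q$; one verifies that both families are indexed by pairs whose fifth index is $p$, so the cancellation is forced. Once these cancellations are confirmed, only the stated combination survives. As a bonus, the combination $-((x_p\partial_\gamma).\theta_{\alpha\beta,K})(v)+2x_p\partial_\gamma.(\theta_{\alpha\beta,K}(v))$ is precisely of the shape required by Remark \ref{lemdual2}, which is why this formulation will at once yield Conjecture \ref{conjdual} in degree two.
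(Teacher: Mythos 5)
Your overall architecture coincides with the paper's proof: reduce via Proposition \ref{morphisms} to the vanishing of $x_pd_Q\Phi(v)$ for all $v$ and all permutations $(p,q,a,b,c)$, dispose of $\sum_t\partial_t\otimes\theta^t$ through $[x_pd_Q,\partial_t]=-\delta_{tp}d_Q$ (giving $-d_Q\otimes\theta^p(v)$, i.e.\ the $\chi_{K\in B_1Q}$-term), and treat $x_pd_Q\sum_{I,J} d_Id_J\otimes\theta_{I,J}(v)$ with Lemma \ref{diquaedila} — the paper applies it twice, with $(h,k,r)=(0,0,1)$ and $(0,1,0)$, which matches your ``either $d$-slot'' description and produces exactly the combination $-\big((x_p\partial_\gamma).\theta_{\alpha\beta,K}\big)(v)+2x_p\partial_\gamma.(\theta_{\alpha\beta,K}(v))$. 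However, the step you single out as most delicate is wrong, and carried out as written it would yield an incorrect equation precisely at $K\in B_1Q$.

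There is no cancellation between the $\partial$-part of the $\omega$'s and the reindexing terms. After passing to the free sum, $\sum_{(I,J)\in\mathcal I_2/B_2}\omega_{I,J}\otimes\theta_{I,J}(v)=\frac18\sum_{(I,J)\in\mathcal I_2}\omega_{I,J}\otimes\theta_{I,J}(v)$, and the $\partial$-part dies on its own, before $x_pd_Q$ is even applied: $\varepsilon_{I,J}\partial_{t_{I,J}}$ is symmetric under the swap $(I,J)\mapsto(J,I)$ while $\theta_{J,I}=-\theta_{I,J}$, so $\sum_{I,J}\varepsilon_{I,J}\partial_{t_{I,J}}\otimes\theta_{I,J}(v)=0$; this is the paper's ``second part'' and the degree-$2$ analogue of Lemma \ref{fybj}. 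Hence there are no spurious $d_Q$-terms from that source in need of a partner. Dually, the reindexing terms $[x_p\partial_\gamma,d_J]\otimes\theta_{\alpha\beta,J}(v)$ that land on $d_Q$ must \emph{not} be cancelled: since $p\notin\{\alpha,\beta\}$, Remark \ref{thetaction} gives $x_p\partial_\gamma.\theta_{\alpha\beta,K}=-D^2_{\gamma\rightarrow p}\theta_{\alpha\beta,K}$, so these terms are exactly the $K\in B_1Q$ instance of the summand $-\big((x_p\partial_\gamma).\theta_{\alpha\beta,K}\big)(v)$, surviving alongside the uncancelled $-\theta^p(v)$ from the $\partial_t$-part. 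Had your proposed cancellation taken place, the coefficient of $d_Q$ would reduce to $-\theta^p(v)+\varepsilon_{pqabc}\sum_{(\alpha,\beta,\gamma)\in C(a,b,c)}x_p\partial_\gamma.(\theta_{\alpha\beta,Q}(v))$, which is not the theorem's equation at $K\in B_1Q$; it also contradicts your own (correct) earlier assertion that the reindexing terms assemble into $-\big((x_p\partial_\gamma).\theta_{\alpha\beta,K}\big)(v)$ uniformly in $K$. Your observation that both families are indexed by pairs whose fifth index is $p$ shows only that they occupy the same stratum, not that they annihilate each other. With this step repaired — $\partial$-part vanishes by antisymmetry, reindexing terms survive everywhere — the rest of your plan is the paper's proof.
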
 
 \begin{proof}
By Proposition \ref{morphisms} we have that $\varphi$ is a morphism of Verma modules if and only if
\[
 x_pd_Q \Big(\sum_{(I,J)\in \mathcal I_2/B_2}\omega_{I,J}\otimes \theta_{I,J}(v)+\sum_{t}\partial_t\otimes \theta^t(v)\Big)=0
\]
for all $v\in V$. It is convenient for us to consider the first sum running over all $(I,J)\in \mathcal I_2$ and so we have
 
\begin{align}\label{aryu}
 \nonumber x_pd_{Q}\Big(\frac{1}{8}\sum_{(I,J)\in\mathcal{I}_2}&\omega_{I,J}\otimes \theta_{I,J}(v)+\sum_t \partial _t\otimes \theta^t(v) \Big)\\
&=x_pd_{Q}\Big(\frac{1}{8}\sum_{I,J}(d_Id_J-\frac{1}{2}\varepsilon_{I,J}\partial_{t_{I,J}})\otimes \theta_{I,J}(v)+\sum_t \partial _t\otimes \theta^t(v) \Big).
 \end{align}
 We split Equation \eqref{aryu} into three parts:

In the first part of Equation \eqref{aryu} we have, using Lemma \ref{diquaedila},
\begin{align*}
 x_pd_{Q}\sum_{I,J}d_Id_J\otimes \theta_{I,J}(v)&=\sum_{I,J}\Big(\varepsilon_{Q,I}(x_p \partial _{t_{Q,I}})d_J-\varepsilon_{Q,J}d_I (x_p \partial_{t_{Q,J}})\Big)\otimes \theta_{I,J}(v)\\
 &=2\sum_{H}d_H \otimes \varepsilon_{pqabc} \sum_{\alpha\beta\gamma}\theta_{\alpha\beta,H}(x_p\partial_\gamma.v)\\
 &-2\sum_{I}d_I\otimes \varepsilon_{pqabc}\sum_{\alpha \beta\gamma}(\theta_{I,\alpha\beta}(x_p\partial_\gamma.v)-D^1_{\gamma\rightarrow p}\theta_{I,\alpha\beta}(v))\\
 &=4\sum_Hd_H\otimes \varepsilon_{pqabc} \sum_{\alpha \beta\gamma}\big(\theta_{\alpha\beta,H}(x_p\partial_\gamma.v)+\frac{1}{2}(x_p\partial_\gamma.\theta_{\alpha\beta,H})(v)\big)\\
 &=4\sum_Hd_H\otimes \varepsilon_{pqabc} \sum_{\alpha \beta\gamma}\big(x_p\partial_\gamma.(\theta_{\alpha\beta,H}(v))-\frac{1}{2}(x_p\partial_\gamma.\theta_{\alpha\beta,H})(v)\big)
\end{align*}
where the sums run over $I,J,H\in\mathcal{I}_1$ and $(\alpha, \beta,\gamma)\in C(a,b,c)$.

In the second part of Equation \eqref{aryu} we have 
\begin{align*}
\sum_{I,J}\frac{1}{2}\varepsilon_{I,J}\partial_{t_{I,J}}\otimes \theta_{I,J}(v)=0
\end{align*}
since the term indexed by $(I,J)$ cancels the term indexed by $(J,I)$.

In the third part of Equation \eqref{aryu} we have:
\begin{align*}
\sum_t x_pd_Q \partial _t\otimes \theta^t(v)=- d_Q\otimes \theta^p(v).
\end{align*}
Putting the three parts together Equation \eqref{aryu} becomes
\begin{align*}
& x_pd_Q\Big(\frac{1}{8}\sum_{I,J\in\mathcal{I}_1}\omega_{I,J}\otimes \theta_{I,J}(v)+\sum_t \partial _t\otimes \theta^t(v) \Big)\\
 &=\sum_{K\in \mathcal I_1/B_1}d_K\otimes \Big(-\chi_{(K\in B_1Q)}\theta^p(v)+\varepsilon_{pqabc}\sum_{(\alpha \beta \gamma) \in C(a,b,c)}-\frac{1}{2}\big(x_p\partial_\gamma.\theta_{\alpha\beta , K}\big)(v)+x_p\partial_\gamma .(\theta_{\alpha \beta,K}(v))\Big)
\end{align*}
and the result follows.
\end{proof}

We deduce that Conjecture \ref{conjdual} holds for morphisms of degree 2 and in particular we have the following duality result for degree 2 morphisms.
\begin{corollary}\label{degree2dual}
 Let $\varphi:M(V)\rightarrow M(W)$ be a morphism of Verma modules of degree 2 associated to 
 \[\Phi=\sum_{(I,J)\in \mathcal I_2/B_2}\omega_{I,J}\otimes \theta_{I,J}+\sum_t \partial_t \otimes \theta^t.\]
 Then the linear map $\psi:M(W^*)\rightarrow M(V^*)$ associated to 
 \[
  \Psi=\sum_{(I,J)\in \mathcal I_2/B_2}\omega_{I,J}\otimes \theta^*_{I,J}+\sum_t \partial_t \otimes (-\theta^t)^*
 \]
 is also a morphism of Verma modules.

\end{corollary}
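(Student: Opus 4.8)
The plan is to apply the characterization of degree 2 morphisms in Theorem \ref{equationdeg2} together with the dualization principle of Proposition \ref{lemdual} (in the equivalent formulation of Remark \ref{lemdual2}). Since $\varphi$ is a morphism associated to $\Phi=\sum_{(I,J)}\omega_{I,J}\otimes\theta_{I,J}+\sum_t\partial_t\otimes\theta^t$, Theorem \ref{equationdeg2} tells us that for all $K\in\mathcal I_1$ and all $v\in V$ the identity
\[
-\chi_{(K\in B_1Q)}\theta^p(v)+\frac{1}{2}\varepsilon_{pqabc}\sum_{(\alpha\beta\gamma)\in C(a,b,c)}\Big(-\big((x_p\partial_\gamma).\theta_{\alpha\beta,K}\big)(v)+2x_p\partial_\gamma.(\theta_{\alpha\beta,K}(v))\Big)=0
\]
holds. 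The goal is to show that the analogous identity holds for the dualized data, namely with each $\theta_{I,J}$ replaced by $\theta_{I,J}^*$ and each $\theta^t$ replaced by $-(\theta^t)^*$; by Theorem \ref{equationdeg2} applied to $W^*$ and $V^*$ this will exactly say that $\psi$ is a morphism of Verma modules.

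First I would observe that the displayed equation is precisely of the form treated in Remark \ref{lemdual2}: the ``$\theta$-type'' terms are $-\chi_{(K\in B_1Q)}\theta^p(v)$, which carry no $L_0$-action and so play the role of the $\sum_i a_i\theta_i(v)$ summands, while each term $-\big((x_p\partial_\gamma).\theta_{\alpha\beta,K}\big)(v)+2x_p\partial_\gamma.(\theta_{\alpha\beta,K}(v))$ is exactly of the shape $2z.(\sigma(v))-(z.\sigma)(v)$ with $z=x_p\partial_\gamma\in L_0$ and $\sigma=\theta_{\alpha\beta,K}$. Thus the entire left-hand side is a linear combination of the two types of expressions appearing in the hypothesis of Remark \ref{lemdual2}, with the scalars $a_i$ absorbing the signs $-\chi_{(K\in B_1Q)}$ and the scalars $b_{j,k}$ absorbing the factors $\tfrac12\varepsilon_{pqabc}$.

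Next I would invoke Remark \ref{lemdual2} directly. It guarantees that the same linear relation holds after sending each $\theta^p$ to $(\theta^p)^*$ (this is the $a_i\theta_i^*$ part) and each $\sigma_j=\theta_{\alpha\beta,K}$ to $-\sigma_j^*=-\theta_{\alpha\beta,K}^*$, with the $L_0$-action term correspondingly transformed to $2z.((-\theta_{\alpha\beta,K}^*)(f))-(z.(-\theta_{\alpha\beta,K}^*))(f)$. Substituting back, the dualized relation reads
\[
-\chi_{(K\in B_1Q)}(\theta^p)^*(f)+\frac{1}{2}\varepsilon_{pqabc}\sum_{(\alpha\beta\gamma)\in C(a,b,c)}\Big(-\big((x_p\partial_\gamma).(-\theta_{\alpha\beta,K}^*)\big)(f)+2x_p\partial_\gamma.((-\theta_{\alpha\beta,K}^*)(f))\Big)=0
\]
for all $f\in W^*$. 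Comparing this with the criterion of Theorem \ref{equationdeg2} for $\psi$, I see that $\psi$ is a morphism provided its defining data are $\theta_{I,J}\mapsto -\theta_{I,J}^*$ for the $\omega$-part and $\theta^t\mapsto (\theta^t)^*$ for the $\partial$-part; after a harmless global sign on the whole of $\Psi$ (which does not affect whether $\psi$ is a morphism) this matches the stated $\Psi=\sum_{(I,J)}\omega_{I,J}\otimes\theta_{I,J}^*+\sum_t\partial_t\otimes(-\theta^t)^*$.

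The only genuinely delicate point, and the step I would write out most carefully, is the sign-bookkeeping in the previous paragraph: one must check that the asymmetry between the $\omega$-terms (which acquire the sign through $-\sigma^*$) and the $\partial_t$-terms (which do not involve the $L_0$-action and so transform as $\theta^p\mapsto(\theta^p)^*$) is exactly consistent with the pattern $\omega_{I,J}\otimes\theta_{I,J}^*$ versus $\partial_t\otimes(-\theta^t)^*$ prescribed in the statement, and in particular that applying a single overall sign reconciles the two. This is precisely the degree-2 instance of the $(-1)^{\ell(T)}$ twist of Conjecture \ref{conjdual}, where $\ell(T)=1$ for the $\partial_t$-terms and $\ell(T)=0$ for the $\omega$-terms; verifying that the signs coming out of Remark \ref{lemdual2} agree with this twist is the heart of the argument, and everything else is a direct substitution.
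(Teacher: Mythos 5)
Your proposal is correct and follows essentially the same route as the paper, whose entire proof of Corollary \ref{degree2dual} is the one-line observation that it is an immediate consequence of Remark \ref{lemdual2} and Theorem \ref{equationdeg2}. Your explicit sign bookkeeping---reading off the dualized data as $\theta_{I,J}\mapsto -\theta_{I,J}^*$, $\theta^t\mapsto(\theta^t)^*$ and then rescaling $\Psi$ by the harmless global sign $-1$---is exactly the verification the paper leaves implicit (as is, in both treatments, the $L_0$-invariance of $\Psi$ needed to invoke Theorem \ref{equationdeg2}, which follows from that of $\Phi$ since $\theta\mapsto\theta^*$ is $L_0$-equivariant and the $\omega$- and $\partial_t$-components of $(U_-)_2$ are separate $L_0$-submodules).
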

\begin{proof}
 This is an immediate consequence of Remark \ref{lemdual2} and Theorem \ref{equationdeg2}.
\end{proof}

\begin{corollary}\label{equationdegree2highest}
 Let $\varphi:M(\lambda)\rightarrow M(\mu)$ be a morphism of Verma modules and $s\in F(\lambda)$ a highest weight vector. Then for all  $K\in \mathcal I_1$ we have
 \[
  2 \chi_{K\in B_1Q}\varepsilon_{pqabc} \theta^p(s)+ \sum_{(\alpha\beta\gamma)\in C(abc)}\Big((-1)^{\chi_{p>\gamma}}(x_p\partial_{\gamma}.\theta_{\alpha\beta,K})(s)+2\chi_{p>\gamma}x_p\partial_\gamma. (\theta_{\alpha\beta,K}(s))\Big)=0
 \]
 
\end{corollary}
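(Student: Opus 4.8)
The plan is to obtain this corollary directly from Theorem~\ref{equationdeg2} by specializing the universally quantified vector $v$ to a highest weight vector $s\in F(\lambda)$, and then using the defining property of $s$ to collapse the two summands inside the bracket. Theorem~\ref{equationdeg2} gives, for every $K\in\mathcal I_1$ and every $v$,
\[
-\chi_{K\in B_1Q}\,\theta^p(v)+\frac12\varepsilon_{pqabc}\sum_{(\alpha\beta\gamma)\in C(a,b,c)}\Big(-\big(x_p\partial_\gamma.\theta_{\alpha\beta,K}\big)(v)+2\,x_p\partial_\gamma.(\theta_{\alpha\beta,K}(v))\Big)=0,
\]
so after setting $v=s$ I would clear the prefactor $\frac12\varepsilon_{pqabc}$ by multiplying through by $2\varepsilon_{pqabc}$, using that $\varepsilon_{pqabc}=\pm1$ and hence $\varepsilon_{pqabc}^2=1$. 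This accounts for the factor $2\varepsilon_{pqabc}$ in front of the $\theta^p(s)$ term and for the disappearance of the explicit $\varepsilon_{pqabc}$ inside the sum.

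The one genuine simplification comes from $s$ being a highest weight vector. In our conventions $x_p\partial_\gamma$ lies in the Borel, and hence is a raising operator, exactly when $p<\gamma$, so $x_p\partial_\gamma.s=0$ whenever $p<\gamma$; moreover $\gamma\in\{a,b,c\}$ and $p\notin\{a,b,c\}$, so $p\neq\gamma$ always. Writing $z=x_p\partial_\gamma$ and recalling the defining identity for the $L_0$-action on $\Hom(F(\lambda),F(\mu))$, namely $z.(\theta(s))=(z.\theta)(s)+\theta(z.s)$ as used in Lemma~\ref{esempi}, I would split into two cases. When $p<\gamma$ the term $\theta(z.s)$ vanishes, so $z.(\theta(s))=(z.\theta)(s)$ and the bracket $-(z.\theta)(s)+2\,z.(\theta(s))$ collapses to $(z.\theta)(s)$; when $p>\gamma$ no cancellation occurs and the bracket remains $-(z.\theta)(s)+2\,z.(\theta(s))$. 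These two outcomes are packaged uniformly by the expression $(-1)^{\chi_{p>\gamma}}(x_p\partial_\gamma.\theta_{\alpha\beta,K})(s)+2\chi_{p>\gamma}\,x_p\partial_\gamma.(\theta_{\alpha\beta,K}(s))$ in the statement, so this step is a purely formal rewriting of the bracket.

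Combining the two observations yields the displayed identity. I expect no conceptual obstacle: the entire content is the highest-weight collapse of the bracket via $x_p\partial_\gamma.s=0$, and the only thing that needs care is consistent sign bookkeeping—both of the sign $\varepsilon_{pqabc}$ produced when clearing the prefactor and of the two-term reduction, which must be tracked against the $\Hom$-action convention $(z.\theta)(v)=z.(\theta(v))-\theta(z.v)$ fixed earlier. Once these signs are verified term by term for each cyclic triple $(\alpha,\beta,\gamma)\in C(a,b,c)$, the corollary follows immediately.
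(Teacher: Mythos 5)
Your proposal is correct in substance and follows exactly the route of the paper's own one-line proof: specialize Theorem \ref{equationdeg2} to $v=s$, use that $x_p\partial_\gamma.s=0$ precisely when $p<\gamma$ (and $p\neq\gamma$ always, since $p\notin\{a,b,c\}$), and package the two cases of the collapsed bracket as $(-1)^{\chi_{p>\gamma}}(x_p\partial_\gamma.\theta_{\alpha\beta,K})(s)+2\chi_{p>\gamma}\,x_p\partial_\gamma.(\theta_{\alpha\beta,K}(s))$. Your case analysis of the bracket, including the use of the convention $(z.\theta)(v)=z.(\theta(v))-\theta(z.v)$, is exactly right and is all the content the paper's proof invokes.

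One caveat on the sign bookkeeping you explicitly deferred: carried out literally, your steps do \emph{not} produce the printed coefficient $+2\chi_{K\in B_1Q}\,\varepsilon_{pqabc}$ of $\theta^p(s)$. Multiplying the displayed equation of Theorem \ref{equationdeg2} by $2\varepsilon_{pqabc}$ gives $-2\varepsilon_{pqabc}\chi_{K\in B_1Q}\,\theta^p(s)$ plus the (correctly collapsed) sum, i.e.\ the opposite sign on the $\theta^p$-term, so your remark that clearing the prefactor ``accounts for the factor $2\varepsilon_{pqabc}$'' silently drops a minus sign. This mismatch is in fact an inconsistency internal to the paper rather than a flaw in your method: the coefficient of $\theta^p(v)$ in the $d_K$-component of $x_pd_Q\Phi(v)$ is orientation-sensitive --- it is $-1$ for $K=Q$ but $+1$ for $K=\overline{Q}$, since $d_{\overline{Q}}=-d_Q$ while $\chi_{K\in B_1Q}$ does not distinguish the two --- and the way the corollary is applied later (e.g.\ in Proposition \ref{proppesi3}, where $K=\overline{Q}$) as well as the degree-three analogues \eqref{deg3n1} and \eqref{maxdeg3} confirm that the $+$ sign is the intended one. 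A complete write-up should therefore either fix the orientation of $K$ relative to $Q$ or flag the sign discrepancy between the two printed statements, instead of asserting that the signs match as stated.
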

\begin{proof}
 This result immediately follows from Theorem \ref{equationdeg2} by observing that if $p<\gamma$ then $x_p\partial_\gamma.s=0$.
\end{proof}

In the following results we fix a morphism $\varphi:M(\lambda)\rightarrow M(\mu)$ of Verma modules of degree 2 associated to $\Phi=\sum \omega_{I,J}\otimes \theta_{I,J}+\sum \partial_t\otimes \theta^t$ and we exploit Corollary \ref{equationdegree2highest} to obtain some constraints on the weights $\lambda$ and $\mu$. The next result is analogous to Proposition \ref{pesigrado1}.
\begin{proposition}\label{proppesi1}Let $h,k,l,m \in [5]$ be such that $\theta_{hk,lm}$ has the leading weight of $\varphi$. Let $1\leq i<j\leq 5$ be such that $j\neq h,k,l,m $ and $i\neq h,k$. Then
\[
\mu_{ij}=-\chi_{i<h<j}-\chi_{i<k <j}.
\]
\end{proposition}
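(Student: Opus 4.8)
The statement is the degree-2 analogue of Proposition \ref{pesigrado1}, so I would mimic that earlier proof: extract a single scalar equation from the master identity in Corollary \ref{equationdegree2highest}, then hit it with a suitable raising operator $x_i\partial_j$ to read off a weight coordinate. The idea is to choose the free indices $p,q,a,b,c$ (a permutation of $[5]$) and the index $K\in\mathcal I_1$ so that the complicated sum over $C(a,b,c)$ collapses to something governed by the single term $\theta_{hk,lm}$ that carries the leading weight of $\varphi$. Since $\theta_{hk,lm}(s)$ is (up to scalar) a highest weight vector in $F(\mu)$ by the leading-weight discussion, applying $x_i\partial_j$ for $i<j$ should annihilate it and leave only the Cartan contribution $h_{ij}.(\theta_{hk,lm}(s))$ together with commutator correction terms $\chi_{i<h<j}$ and $\chi_{i<k<j}$, exactly as in the degree-1 case.

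\textbf{Choosing the indices.} The hypotheses single out $i,j$ with $j\notin\{h,k,l,m\}$ and $i\notin\{h,k\}$; these are precisely the conditions ensuring that the relevant $x_i\partial_j$ and $x_j\partial_?$ operators interact cleanly with the pairs $\{h,k\}$ and $\{l,m\}$. Concretely I would set $p=j$, let $\{a,b,c\}$ contain $i$, and choose $q$ together with the index $K$ so that the $\alpha\beta$ pair matching $\{h,k\}$ survives in the cyclic sum $C(a,b,c)$ while the second pair of indices recorded by $K$ matches $\{l,m\}$. The constraint $j\neq h,k,l,m$ guarantees $p=j$ does not already appear among the indices carried by $\theta_{hk,lm}$, and $i\neq h,k$ guarantees that when I subsequently apply $x_i\partial_j$ the only nonzero commutators with $\theta_{hk,lm}$ come from $i$ falling strictly between $h$ and $k$ (contributing $\chi_{i<h<j}$ and $\chi_{i<k<j}$ once we also use $j>i$), matching the claimed right-hand side. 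As in Proposition \ref{pesigrado1}, the terms $x_p\partial_\gamma.(\theta_{\alpha\beta,K}(s))$ with $p<\gamma$ drop out because $s$ is a highest weight vector, so Corollary \ref{equationdegree2highest} reduces to a short relation among a few $\theta$'s applied to $s$.

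\textbf{Carrying it out and the main obstacle.} After specializing, I expect a relation of the shape
\[
x_j\partial_i.(\theta_{hk,lm}(s)) + (\text{terms involving other }\theta_{\alpha\beta,K}(s)) = 0,
\]
and then applying $x_i\partial_j$ and using $[x_i\partial_j,x_j\partial_i]=h_{ij}$ together with the explicit action $x_s\partial_r.\theta^T_I$ from Remark \ref{thetaction} to turn the left-hand side into $h_{ij}.(\theta_{hk,lm}(s))$ plus the two $\chi$-corrections. Since $\theta_{hk,lm}(s)$ is a highest weight vector of weight $\mu$, we have $h_{ij}.(\theta_{hk,lm}(s))=\mu_{ij}\,\theta_{hk,lm}(s)$, and comparing coefficients yields $\mu_{ij}=-\chi_{i<h<j}-\chi_{i<k<j}$. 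The main obstacle, and the place demanding genuine care, is verifying that all the other $\theta_{\alpha\beta,K}(s)$ that appear in the specialized equation either vanish or have weight strictly lower than $\mu$ in the dominance order — so that after applying the raising operator $x_i\partial_j$ they contribute nothing to the coefficient of the highest weight vector. Here I would lean on Corollary \ref{dominancetheta} and the maximality built into the definition of the leading weight (as in Proposition \ref{leading}) to argue that no competing term can survive, which is exactly the bookkeeping that makes this case heavier than its degree-1 predecessor.
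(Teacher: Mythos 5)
Your plan reproduces the paper's proof: the paper invokes Corollary \ref{equationdegree2highest} with exactly your specialization ($a=i$, $b=h$, $c=k$, $p=j$, $K=(l,m)$, the hypothesis $j\neq h,k,l,m$ forcing the $(x_j\partial_\gamma.\theta_{\alpha\beta,K})(s)$ terms to vanish), obtains the three-term relation $x_j\partial_i.(\theta_{hk,lm}(s))+\chi_{h<j}x_j\partial_h.(\theta_{ki,lm}(s))+\chi_{k<j}x_j\partial_k.(\theta_{ih,lm}(s))=0$, and then applies $x_i\partial_j$, using Corollary \ref{dominancetheta} and the maximality of the leading weight to reduce to $h_{ij}.(\theta_{hk,lm}(s))=-(\chi_{i<h<j}+\chi_{i<k<j})\theta_{hk,lm}(s)$, precisely as you outline. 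One small correction to your final bookkeeping paragraph: after applying $x_i\partial_j$ every term lies in the one-dimensional weight space $F(\mu)_\mu$, so the competing terms ($\theta_{kj,lm}(s)$ and $\theta_{jh,lm}(s)$, and $\theta_{ki,lm}(s)$ when $i>h$) are eliminated because their $\theta$'s have weight strictly \emph{higher} than the leading weight in the dominance order and therefore annihilate $s$ --- not because lower-weight terms can be discarded from the coefficient of the highest weight vector --- while the cross terms $x_i\partial_h.(\theta_{ki,lm}(s))$ and $x_i\partial_k.(\theta_{ih,lm}(s))$ with $i<h$, $i<k$ survive and produce exactly the claimed $\chi$-corrections.
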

\begin{proof}By Corollary \ref{equationdegree2highest} used with $a=i$,  $b=h$, $c=k$, $p=j$ and $K=(l,m)$, observing that $x_j\partial_\gamma.\theta_{\alpha\beta,K}=0$ for all $(\alpha,\beta,\gamma)\in C(i,h,k)$, we obtain the following relation
\[
x_j\partial_i.(\theta_{hk,lm}(s))+\chi_{h<j} x_j\partial_h .(\theta_{ k i ,lm }(s))+\chi_{k<j}x_j\partial_k.( \theta_{i h, kl }(s))=0.
\]
Applying $x_i\partial_j$ to this equation we have
\begin{align*} 
h_{ij}.(\theta_{hk,lm}(s))&+\chi_{h<j}\big(x_i\partial_h.(\theta_{k i,lm }(s))-x_j\partial_h.(\theta_{k j,lm}(s))\big)\\
&+\chi_{k <j}\big(x_i\partial_k.(\theta_{ih,lm }(s))-x_j\partial_k.(\theta_{jh,lm }(s))\big)=0
\end{align*}
Since $\theta_{hk,lm}$ has the leading weight of $\varphi$, if $h<j$ we necessarily have $\theta_{k j,lm }(s)=0$, by Corollary \ref{dominancetheta}.  Similarly, if $k<j$, we have $\theta_{jh,lm }(s)=0$. Therefore the previous equation becomes 
\[ 
h_{ij}.(\theta_{hk,lm }(s))+\chi_{h<j}x_i\partial_h.(\theta_{k i,lm }(s))
+\chi_{k <j}x_i\partial_k.( \theta_{ih,lm }(s))=0
\]
Again, if $i>h$, we have $\theta_{k i,lm }(s)=0$ and otherwise we have $x_i\partial_h.(\theta_{k i,lm }(s))=-\theta_{kh,lm}(s)$ and similarly for the other term, and so we have

\[
h_{ij}.(\theta_{hk,lm}(s))-\chi_{h<j}\chi_{i<h}\theta_{kh,lm}(s)
-\chi_{k <j}\chi_{i<k }\theta_{kh,lm}(s)=0
\]
i.e.,
\[
h_{ij}.(\theta_{hk,lm}(s))=-(\chi_{i<h<j}+\chi_{i<k <j})\theta_{hk,lm}(s).
\]
\end{proof}
\begin{proposition}\label{proppesi3}
Let $i,h,k,l,m\in [5]$, with $i,h,k,m$ distinct and $i<m$, be such that $\theta_{hk,lm}$ has the leading weight of $\varphi$. Then
\begin{align*}
&h_{im}.( \theta_{hk,lm}(s))=\\
&\Big(\frac{1}{2}-\chi_{i<h<m}-\chi_{i<k<m}\Big)\theta_{hk,lm}(s)-\varepsilon_{mlhki}\theta^{i}(s)-\frac{1}{2}\Big((-1)^{\chi_{h<m}}\theta_{hl,km}(s)+(-1)^{\chi_{k<m}}\theta_{hm,kl}\Big).
\end{align*}

\end{proposition}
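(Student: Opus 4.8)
The plan is to obtain the identity from a single specialization of Corollary~\ref{equationdegree2highest} followed by one application of a raising operator. Assume first that $i,h,k,l,m$ are pairwise distinct, so that $\{i,h,k,l,m\}=[5]$. I would apply Corollary~\ref{equationdegree2highest} with $p=m$, $q=l$, $(a,b,c)=(h,k,i)$ and $K=(l,m)$, so that $C(a,b,c)=C(h,k,i)$ and $Q=(m,l)$. The decisive difference from the proof of Proposition~\ref{proppesi1} is that now $m$ occurs in $K$: this makes the term $2\chi_{K\in B_1Q}\varepsilon_{pqabc}\theta^p(s)=2\varepsilon_{mlhki}\theta^m(s)$ survive, and it makes the ``action on the homomorphism'' contributions nonzero. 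Using the $L_0$-action of Remark~\ref{thetaction} one computes $x_m\partial_\gamma.\theta_{\alpha\beta,lm}=-\theta_{\alpha\beta,l\gamma}$, so the equation reads, up to the signs $(-1)^{\chi_{m>\gamma}}$, as a combination of $\theta^m(s)$, of the three symbols $\theta_{hk,li}(s),\theta_{ki,lh}(s),\theta_{ih,lk}(s)$, and of the three lowering terms $x_m\partial_i.(\theta_{hk,lm}(s))$, $x_m\partial_h.(\theta_{ki,lm}(s))$, $x_m\partial_k.(\theta_{ih,lm}(s))$.

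Next I would apply $x_i\partial_m$ to the whole equation, using $x_i\partial_m.s=0$ together with the brackets $[x_i\partial_m,x_m\partial_i]=h_{im}$ and $[x_i\partial_m,x_m\partial_\gamma]=x_i\partial_\gamma$ for $\gamma=h,k$. The leading lowering term then yields $2h_{im}.(\theta_{hk,lm}(s))$, the residual piece vanishing because $i$ does not occur in $\theta_{hk,lm}$ and hence $x_i\partial_m.\theta_{hk,lm}=0$. The term $2\varepsilon_{mlhki}\theta^m(s)$ turns into $2\varepsilon_{mlhki}\theta^i(s)$, since $x_i\partial_m.\theta^m=\theta^i$. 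Applying $x_i\partial_m$ to the three symbols $\theta_{hk,li}(s),\theta_{ki,lh}(s),\theta_{ih,lk}(s)$ replaces the letter $i$ by $m$; the first produces $-\theta_{hk,lm}(s)$ --- which is precisely the origin of the $\tfrac12$ in the statement --- while the other two, after being brought to standard form through the antisymmetry $\theta_{g(I)}=(-1)^{\ell(g)}\theta_I$ (one checks $\theta_{km,lh}=\theta_{hl,km}$ and $\theta_{mh,lk}=\theta_{hm,kl}$), give the off-diagonal terms $(-1)^{\chi_{h<m}}\theta_{hl,km}(s)$ and $(-1)^{\chi_{k<m}}\theta_{hm,kl}(s)$.

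The leading-weight hypothesis enters through the two remaining lowering terms. Applying $x_i\partial_m$ to $2\chi_{m>h}x_m\partial_h.(\theta_{ki,lm}(s))$ gives $2\chi_{m>h}\big(x_i\partial_h.(\theta_{ki,lm}(s))-x_m\partial_h.(\theta_{km,lm}(s))\big)$. By Corollary~\ref{dominancetheta}, in the branch $m>h$ the weight of $\theta_{km,lm}$ strictly dominates the leading weight, so $\theta_{km,lm}(s)=0$ and the second summand disappears; and $x_i\partial_h.(\theta_{ki,lm}(s))=\chi_{i<h}\theta_{hk,lm}(s)$, using that $\theta_{ki,lm}(s)=0$ unless $i<h$ and that $\theta_{kh,lm}=-\theta_{hk,lm}$. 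This collapses the term to $2\chi_{i<h<m}\theta_{hk,lm}(s)$, and symmetrically the last term yields $2\chi_{i<k<m}\theta_{hk,lm}(s)$. Collecting the coefficient of $\theta_{hk,lm}(s)$, namely $-1+2\chi_{i<h<m}+2\chi_{i<k<m}$ coming from the symbol reduction and the two lowering terms, together with the $\theta^i$ and off-diagonal contributions, and then dividing by $2$, reproduces the stated formula exactly.

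The main obstacle is the bookkeeping rather than any single idea: one must track the $\varepsilon$-signs and the $(-1)^{\chi}$ factors and, above all, reduce each index-permuted symbol ($\theta_{km,lh}$, $\theta_{mh,lk}$, $\theta_{kh,lm}$, and so on) to the standard basis via the $B_2$-antisymmetry, which is what generates the signs $(-1)^{\chi_{h<m}}$ and $(-1)^{\chi_{k<m}}$. At each lowering term one must also run the small dominance case-distinction of Corollary~\ref{dominancetheta} deciding which $\theta_K(s)$ are forced to vanish. Finally I would verify the degenerate configurations in which $l$ coincides with one of $i,h,k$ --- permitted since only $i,h,k,m$ are assumed distinct --- checking that the same specialization and reductions still yield the formula.
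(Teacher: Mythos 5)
Your proof is correct and takes essentially the same route as the paper's own: the identical specialization of Corollary~\ref{equationdegree2highest} (with $p=m$, $(a,b,c)=(h,k,i)$, $K=(l,m)$, so that $\chi_{K\in B_1Q}\varepsilon_{pqabc}=\varepsilon_{mlhki}$), followed by applying the raising operator $x_i\partial_m$, with the same leading-weight/dominance arguments (Corollary~\ref{dominancetheta}) to kill $\theta_{km,lm}(s)$, $\theta_{mh,lm}(s)$ and the $i>h$, $i>k$ branches, and the same $B_2$-antisymmetry reductions producing the signs $(-1)^{\chi_{h<m}}$, $(-1)^{\chi_{k<m}}$. If anything, your bookkeeping is slightly more reliable than the printed one --- the paper's intermediate display carries a harmless sign slip on the $\theta_{hk,ml}(s)$ term (your $+\tfrac12\theta_{hk,lm}(s)$, ``the origin of the $\tfrac12$,'' is the correct value, consistent with the stated formula) --- and your closing check of the degenerate configurations $l\in\{i,h,k\}$ addresses a point the paper passes over in silence.
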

\begin{proof}
We consider Corollary \ref{equationdegree2highest} with $a=h$, $b=k$, $c=i$, $p=m$ and $K=(l,m)$. We observe that \[\varepsilon_{pqabc}\chi_{K\in B_1Q}=\varepsilon_{mqhki}\chi_{l=q}=\varepsilon_{mlhki}\] and so we obtain
\begin{align*}
\varepsilon_{mlhki}\theta^m(s)
&+\frac{1}{2}\Big((-1)^{\chi_{h<m}}\theta_{k i ,hl}(s)
+(-1)^{\chi_{k<m}}\theta_{i h,kl}(s)
-\theta_{hk,il}(s)\Big)\\
&+\chi_{h<m}x_m \partial_h.( \theta_{k i ,lm}(s))
+\chi_{k<m}x_m \partial_k. (\theta_{i h ,lm}(s))
+x_m \partial_i. (\theta_{hk,lm}(s))=0
\end{align*}
We apply $x_i\partial_m$ to this equation and we obtain
\begin{align*}
\varepsilon_{mlhki}\theta^i(s)
&-\frac{1}{2}\Big((-1)^{\chi_{h<m }}\theta_{km,hl}(s)
+(-1)^{\chi_{k<m}}\theta_{mh,kl}(s)
+\theta_{hk,ml}(s)\Big)\\
&-\chi_{i<h <m }\theta_{kh,lm}(s)
-\chi_{i<k <m } \theta_{kh ,lm}(s)
+h_{im}.(\theta_{hk,lm}(s))=0
\end{align*}
and the result follows.
\end{proof}

\begin{proposition}\label{proppesi2}
Let $h,k,m,i\in [5]$ be distinct, $i<m$, be such that $\theta_{hk,hm}$ has the leading weight of $\varphi:M(\lambda)\rightarrow M(\mu)$.  Then
\[
\mu_{i,m}=\chi_{k<m}-\chi_{i<h<m}-\chi_{i<k<m}
\]
and
\[
\lambda_{i,m}=\chi_{k<m}-\chi_{i<h<m}-\chi_{i<k<m}-1.
\]
\end{proposition}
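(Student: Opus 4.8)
The plan is to deduce the two formulas in sequence, exploiting that they are linked by a leading-weight computation. Since $\theta_{hk,hm}$ has the leading weight of $\varphi$, its weight is $\nu=\mu-\lambda$, and by the argument in the proof of Corollary \ref{dominancetheta} this weight equals $-\lambda(d_{hk}d_{hm})$. The index multiset of $d_{hk}d_{hm}$ is $\{h,h,k,m\}$, in which $i$ (being distinct from $h,k,m$) does not occur while $m$ occurs once; hence $\nu_{i,m}=-\lambda_{i,m}(d_{hk}d_{hm})=-(m_i-m_m)=-(0-1)=1$. Therefore $\mu_{i,m}-\lambda_{i,m}=\nu_{i,m}=1$, and it suffices to prove the formula for $\mu_{i,m}$ and then read off $\lambda_{i,m}=\mu_{i,m}-1$.

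To compute $\mu_{i,m}$ I would invoke Proposition \ref{proppesi3} with $l=h$; this is legitimate because that proposition only requires $i,h,k,m$ distinct and $i<m$, exactly our hypotheses. The left-hand side simplifies at once: as $\theta_{hk,hm}$ has the leading weight, $\theta_{hk,hm}(s)$ is a \emph{nonzero} highest weight vector of $F(\mu)$, i.e.\ a weight vector of weight $\mu$, so $h_{im}$ acts on it by the scalar $\mu_{i,m}$ (using $h_{im}=\sum_{b=i}^{m-1}h_{b\,b+1}$ and the duality between the $\omega_{b\,b+1}$ and the $h_{b\,b+1}$). Thus the left-hand side is $\mu_{i,m}\,\theta_{hk,hm}(s)$.

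It then remains to evaluate the right-hand side of Proposition \ref{proppesi3} at $l=h$. Two terms drop out: $\varepsilon_{mhhki}=0$ because of the repeated index $h$, and $\theta_{hh,km}=0$ because the first pair $(h,h)$ is fixed by the coordinate-reversal $s_0$, so the $B$-equivariance $\theta^T_{g(I)}=(-1)^{\ell(g)}\theta^T_I$ of Remark \ref{thetaction} forces $\theta_{hh,km}=-\theta_{hh,km}$. The surviving term involves $\theta_{hm,kh}$, and the one non-routine point—what I expect to be the only real obstacle—is the sign identity $\theta_{hm,kh}=\theta_{hk,hm}$. I would obtain it from the same equivariance, using that $g\mapsto(-1)^{\ell(g)}$ is the sign homomorphism of $B_2$: swapping the two pairs ($s_1$) contributes $(-1)^{\ell(s_1)}=-1$ and reversing the second pair ($t_2=s_1s_0s_1$) contributes $(-1)^{\ell(t_2)}=-1$, and the two sign changes cancel.

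Feeding these into Proposition \ref{proppesi3} gives
\[
\mu_{i,m}\,\theta_{hk,hm}(s)=\Big(\tfrac12-\chi_{i<h<m}-\chi_{i<k<m}-\tfrac12(-1)^{\chi_{k<m}}\Big)\theta_{hk,hm}(s).
\]
Since $\theta_{hk,hm}(s)\neq0$ I cancel it, and the elementary identity $\tfrac12-\tfrac12(-1)^{\chi_{k<m}}=\chi_{k<m}$ (check the two cases $k<m$ and $k>m$) yields $\mu_{i,m}=\chi_{k<m}-\chi_{i<h<m}-\chi_{i<k<m}$. Combined with $\lambda_{i,m}=\mu_{i,m}-1$ from the first paragraph, both formulas follow. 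All steps besides the $B_2$ sign bookkeeping for $\theta_{hm,kh}$ are direct substitution and arithmetic.
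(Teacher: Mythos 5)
Your proof is correct and takes essentially the same route as the paper: both specialize Proposition \ref{proppesi3} to $l=h$, discard the terms $\varepsilon_{mhhki}\theta^i(s)$ and $\theta_{hh,km}(s)$, identify $\theta_{hm,kh}=\theta_{hk,hm}$ via the $B_2$-equivariance of Remark \ref{thetaction}, and read off $\mu_{i,m}$ from the action of $h_{im}$ on the nonzero highest weight vector $\theta_{hk,hm}(s)$, using $\tfrac12-\tfrac12(-1)^{\chi_{k<m}}=\chi_{k<m}$. The only differences are presentational: you establish $\lambda_{i,m}=\mu_{i,m}-1$ first (the paper concludes with it via $\lambda_{i,m}(\theta_{hk,hm})=1$) and you make explicit the $B_2$ sign bookkeeping that the paper applies silently.
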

\begin{proof}
We use Proposition \ref{proppesi3} with $l=h$ and deduce
\begin{align*}
h_{im}.(\theta_{hk,hm}(s))&=\Big(\frac{1}{2}-\chi_{i<h<m}-\chi_{i<k<m}\Big)\theta_{hk,hm}(s)-\frac{1}{2}(-1)^{\chi_{k<m}}\theta_{hm,kh}\\
&=\Big(\frac{1}{2}-\frac{1}{2}(-1)^{\chi_{k<m}}-\chi_{i<h<m}-\chi_{i<k<m}\Big)\theta_{hk,hm}(s)\\
&=(\chi_{k<m}-\chi_{i<h<m}-\chi_{i<k<m}\Big)\theta_{hk,hm}(s).
\end{align*}
and the first part of the statement follows. The second part is an easy consequence since
\[
\lambda_{i,m}(\theta_{hk,hm})=1.
\]
\end{proof}

\begin{theorem}\label{teorema2} Let $\varphi:M(\lambda)\rightarrow M(\mu)$ be a morphism of degree 2.
Then one of the following occurs:
\begin{enumerate}
 \item 
$\lambda=(1,0,0,n)$, $\mu=(0,0,1,n+1)$ for some $n\geq 0$ and, up to a scalar, $\varphi=\nabla_C\nabla_B$;
 \item  
$\lambda=(n+1,1,0,0)$, $\mu=(n,0,0,1)$ for some $n\geq 0$ and, up to a scalar, $\varphi=\nabla_B\nabla_A$;
 \item 
$\lambda=(0,1,0,0)$, $\mu=(0,0,1,0)$, and, up to a scalar, $\varphi=\nabla_C\nabla_A$.
\end{enumerate}

\end{theorem}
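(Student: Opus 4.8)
The plan is to recover the whole morphism from its leading term. Write the leading term as $\theta_{hk,lm}$, i.e.\ assume the component indexed by the pairs $\{h,k\}$ and $\{l,m\}$ realizes the leading weight $\nu=\mu-\lambda$; note that these two pairs must be distinct as sets, since otherwise $\omega_{hk,lm}=0$ and there is no such component. The argument splits according to whether the two pairs are disjoint or share exactly one index, and in each case proceeds in three stages: first pin down the admissible index configurations and the weights $\lambda,\mu$ using Propositions \ref{proppesi1}, \ref{proppesi3}, \ref{proppesi2} together with the dominance (positivity) of $\lambda$ and $\mu$; then show that a degree-$2$ morphism with such $\lambda,\mu$ is unique up to a scalar; and finally identify it with one of the explicit compositions of Example \ref{exampledeg2}.

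First I would run the weight analysis. If $\{h,k\}$ and $\{l,m\}$ are disjoint, Proposition \ref{proppesi1}, applied with both labellings $\theta_{hk,lm}$ and $\theta_{lm,hk}=-\theta_{hk,lm}$, forces several $\mu_{ij}$ to equal $-\chi_{i<h<j}-\chi_{i<k<j}$, and imposing $\mu_{ij}\geq 0$ leaves only the pairs $\{1,2\}$ and $\{4,5\}$, which is configuration (3); the remaining coordinates of $\lambda,\mu$ are then read off from Propositions \ref{proppesi1} and \ref{proppesi3}. If the two pairs share one index, Proposition \ref{proppesi2} gives both $\mu_{i,m}$ and $\lambda_{i,m}$ explicitly, and positivity of $\lambda$ and $\mu$ eliminates every possibility except $\{1,2\},\{1,5\}$ and $\{1,5\},\{4,5\}$, which are exactly configurations (2) and (1). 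In each surviving case all coordinates of $\lambda$ and $\mu$ except one are determined, the remaining free coordinate being the parameter $n\geq 0$ appearing in the statement.

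The uniqueness step is the main obstacle. For fixed $\lambda,\mu$ the leading weight $\nu$ is fixed, but the weight space $((U_-)_2)_{-\nu}$ is several-dimensional (for configuration (3) it is spanned by $\omega_{12,45}$, $\omega_{14,25}$, $\omega_{15,24}$ and $\partial_3$), so the leading term is not \emph{a priori} proportional to the one in Example \ref{exampledeg2}. I would resolve this by specializing the morphism equation of Corollary \ref{equationdegree2highest} to a highest weight vector $s$ and to the finitely many relations of leading weight, obtaining a linear system in the scalars $\theta_{I,J}(s),\theta^t(s)$ which I expect to have a one-dimensional solution space. Once the leading term is known up to a scalar, Proposition \ref{leading}(ii) upgrades this to uniqueness of the entire singular vector, hence of $\varphi$, and comparison with Example \ref{exampledeg2} yields the stated normal forms. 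To halve the work I would invoke Corollary \ref{degree2dual}: it gives an involutive bijection between degree-$2$ morphisms $M(\lambda)\to M(\mu)$ and $M(\mu^*)\to M(\lambda^*)$, under which configuration (2) corresponds to configuration (1) and configuration (3) to itself, so that case (1) comes for free from case (2).

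The two points I expect to be genuinely delicate are the exhaustiveness of the positivity elimination — there are many index patterns to discard, and in the disjoint case one must combine Propositions \ref{proppesi1} and \ref{proppesi3} rather than rely on dominance alone — and the one-dimensionality of the leading-weight linear system, which is where the $E(5,10)$ bracket structure (and not merely $\mathfrak{sl}_5$ representation theory) genuinely enters through the $\partial_t\otimes\theta^t$ term coupling to the $\omega_{I,J}\otimes\theta_{I,J}$ terms.
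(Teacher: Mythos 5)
Your proposal follows the paper's own strategy almost step for step: identify which components $\theta_{hk,lm}$, $\theta^t$ can carry the leading weight; use Propositions \ref{proppesi1}, \ref{proppesi2}, \ref{proppesi3} together with dominance of $\lambda$ and $\mu$ to isolate the three admissible configurations and their weights; use the relations of Corollary \ref{equationdegree2highest} on a highest weight vector to determine the leading term up to a scalar; conclude by Proposition \ref{leading}(ii) and comparison with Example \ref{exampledeg2}; and halve the case analysis through Corollary \ref{degree2dual}. The only structural difference is that the paper first reduces the shared-index case with the sign identity $\big((-1)^{\chi_{k<p}}+(-1)^{\chi_{m<p}}\big)\theta_{hk,hm}(s)=0$, extracted from Corollary \ref{equationdegree2highest}, which kills every pattern admitting a fifth index $p$ with $p>k,m$ or $p<k,m$ before any weight proposition is invoked; your plan replaces this filter by the weight propositions alone.

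That replacement is workable, but one step of your stage one, as literally written, would fail: in the shared-index case Proposition \ref{proppesi2} together with positivity does \emph{not} eliminate every configuration other than $\{1,2\},\{1,5\}$ and $\{1,5\},\{4,5\}$. For $\theta_{hk,hm}$ that proposition only constrains $\lambda_{i,m}$ for $i<m$ with $i\notin\{h,k,m\}$; when the two free indices are small there may be no admissible $i$ at all (for the pairs $\{h,1\},\{h,2\}$ it yields no constraint whatsoever), and for the pairs $\{1,2\},\{2,5\}$ it gives only $\lambda_{35}=\lambda_{45}=0$, perfectly consistent with dominance. These survivors must be killed by Proposition \ref{proppesi1} --- exactly the paper's computation $\mu_{14}=-\chi_{1<2<4}-\chi_{1<5<4}=-1$ for $\theta_{21,25}$, with the dual configuration handled by Corollary \ref{degree2dual}. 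Since Proposition \ref{proppesi1} is in the toolkit you announce at the outset, this is a repairable overstatement rather than a fatal flaw, but the shared case genuinely needs both propositions (or the paper's sign identity). A second, smaller omission: your leading-term normalization tacitly assumes that some $\theta_{hk,lm}$ realizes the leading weight, whereas a priori it could be realized only by some $\theta^t$ (which has the same weight as the disjoint-pair components); the paper dispatches this by deriving $\theta^p(s)=0$ directly from Corollary \ref{equationdegree2highest} once the weight-$\nu$ components $\theta_{I,J}(s)$ vanish, and your leading-weight linear system would do the same provided you also run it in the configurations you discard, not only in the three surviving ones.
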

\begin{proof}
We first make the following observation that will allow us to simplify several arguments. If $\nu\in \Lambda$ is any weight,  by Corollary \ref{degree2dual}, if the statement holds for all morphisms of leading weight $\nu$ then it holds also for all morphisms of leading weight $-\nu^*$. 

We let $s$ be a highest weight vector of $F(\lambda)$ and we suppose that $\theta_{hk,lm}$ has the leading weight of $\varphi$. Let us first assume $|\{h,k,l,m\}|=3$ i.e., without loss of generality,  $h=l$. 

By Corollary \ref{equationdegree2highest} with $K=(p,a)$
we have:
\begin{align}
\nonumber-((-1)^{\chi_{b<p}}&+(-1)^{\chi_{c<p}})\theta_{ab,ca}(s)\\+
&2\chi_{a<p}x_p\partial_a.(\theta_{bc,pa}(s))
+2\chi_{b<p}x_p\partial_b.(\theta_{ca,pa}(s))+2\chi_{c<p}x_p\partial_c.(\theta_{ab,pa}(s))=0.
\label{equation2m=a}
\end{align}
Using this equation with $a=h$, $b=k$, $c=m$, since $\theta_{hk,hm}$ has the leading weight of $\varphi$, we immediately obtain 
\[
((-1)^{\chi_{k<p}}+(-1)^{\chi_{m<p}})\theta_{hk,hm}(s)=0.
\]

In particular, if we can choose $p$ such that $p>k,m$ or $p<k,m$ we have $\theta_{hk,hm}(s)=0$, a contradiction. 
So we reduce to study the following cases: (a) $k=1, m=5$; (b) $k=2, m=5, h=1$; (c) $k=1, m=4, h=5.$
\begin{enumerate}
\item [(a)] By duality, since $\lambda(\theta_{21,25})=-(\lambda(\theta_{41,45}))^*$, it is enough to consider only the cases $h=2,3$; we have, by Proposition \ref{proppesi1},
\[
\mu_{14}=-\chi_{1<h<4}-\chi_{1<5<4}=-1,
\]
a contradiction. 
\item [(b)] In this case we have, by Proposition \ref{proppesi1} 
\[
\mu_{23}=-\chi_{2<1<3}-\chi_{2<5<3}=0
\]
and by Proposition \ref{proppesi2} we have
\[
\mu_{35}=\chi_{2<5}-\chi_{3<1<5}-\chi_{3<2<5}=1.
\]
Since the leading weight of $\varphi$ is $\lambda(\theta_{12,15})=(-1,-1,0,1)$ we conclude that $\mu=(n,0,0,1)$ for some $n\geq 0$ and so $\lambda=(n+1,1,0,0)$.
The leading term of the singular vector $\varphi(1\otimes s)$ is $\omega_{12,15}\otimes \theta_{12,15}(s)=d_{12}d_{15}\otimes \theta_{12,15}(s)$ hence, up to a scalar,
 $\varphi=\nabla_B \nabla_A$ by Proposition \ref{leading}.
\item[(c)] Since $\lambda(\theta_{51,54})=-\lambda(\theta_{12,15})^*$ this follows from case (b) and we obtain in this case the morphism $\nabla_C\nabla_B$.
\end{enumerate}

This concludes the study of all possible $\theta_{hk,lm}$ having the leading weight of $\varphi$ with $h,k,l,m$ not distinct.

In order to deal with the case where $h,k,l,m$ are distinct we let $p$ be different from $h,k,l,m$.
If $p=4,5$ we apply Proposition \ref{proppesi1} with $i=1$ and $j=p$ and we get that $\mu_{1 p}<0$ hence $\theta_{hk,lm}$ does not have the leading
weight of $\varphi$. By Corollary \ref{equationdegree2highest} we also have $\theta^p(s)=0$ and so also $\theta^p$ can not have the leading weight of $\varphi$.

For $p=1$ we have $\lambda(\theta^1)=-\lambda(\theta^5)^*$ and if $p=2$ we have  $\lambda(\theta^2)=-\lambda(\theta^4)^*$ and so these cases follows from the previous discussion
by Corollary \ref{degree2dual}.

For $p=3$ Proposition \ref{proppesi1} with $i=1$, $j=3$ shows that $\theta_{14,25}$ and $\theta_{15,24}$ cannot have the leading weight of $\varphi$,
i.e.\ $\theta_{14,25}(s)=\theta_{15,24}(s)=0$, and
that if $\theta_{12,45}$ has leading weight then $\mu_{1,3}=0$. Besides, by Corollary \ref{equationdegree2highest}, 
$\theta_{12,45}(s)=2\theta^3(s)$.
By Proposition \ref{proppesi3} we immediately get 
\[
 h_{35}.(\theta_{12,45}(s))=\theta_{12,45}(s)
\]
and so $\mu_{3,5}=1$.
Since  the leading weight is $\lambda(\theta_{12,45})=(0,-1,1,0)$ we conclude that $\mu=(0,0,1,0)$ and so $\lambda=(0,1,0,0)$.
The leading term of $\varphi(1\otimes s)$ is 
$$\omega_{12,45}\otimes \theta_{12,45}(s)+\partial_3\otimes\theta^3(s)=d_{12}d_{45}\otimes \theta^3(s)$$ 
hence, up to a scalar, $\varphi=\nabla_{C}\nabla_A$ by Proposition \ref{leading}.


\end{proof}
\section{Morphisms of degree 3}\label{nine}
This section is dedicated to the study of morphisms of Verma modules of degree three. We consider a linear map  $\varphi: M(\lambda)\rightarrow M(\mu)$
 of degree three associated to \[\Phi=\sum_{I\in{\mathcal I}_3/B_3}\omega_I\otimes \theta_I+\sum_{t\in[5], I\in{\mathcal I}_1/B_1}\partial_t\omega_I\otimes \theta^t_I.\] 
As in the case of morphisms of degree one and two, our goal is to establish necessary and sufficient conditions to ensure that $\varphi$ is a morphism of Verma modules.
\begin{lemma}\label{fybj} If $x.\Phi=0$ for every $x\in L_0$, then
the following relation holds for every $v\in F(\lambda)$:
\[
\sum_{I\in \mathcal I_3}\omega_I\otimes \theta_I(v)=\sum_{I\in \mathcal I_3}d_I\otimes \theta_I(v).
\]
\end{lemma}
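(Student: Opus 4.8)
The plan is to isolate the ``correction'' $\omega_I-d_I$, which in degree $3$ is a short explicit expression lying in $(U_-)_{3,1}$, and then to show that this correction integrates to zero against the coefficients $\theta_I$ by a symmetry/antisymmetry argument.

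First I would record the shape of $\omega_I$ in degree $3$. Since any two distinct pairs in $\mathcal S_3$ necessarily share an element, the only self-intersection free subsets of $\mathcal S_3$ are the empty set and the three singletons $\{\{1,2\}\}$, $\{\{1,3\}\}$, $\{\{2,3\}\}$, all of crossing number $0$. Hence Definition \ref{defomega} gives, for every $I=(I_1,I_2,I_3)\in\mathcal I_3$,
\[
\omega_I-d_I=D_{\{1,2\}}(I)\,d_{I_3}+D_{\{1,3\}}(I)\,d_{I_2}+D_{\{2,3\}}(I)\,d_{I_1},
\]
where $D_{\{k,l\}}(I)=\tfrac12(-1)^{k+l}\varepsilon_{I_k,I_l}\partial_{t_{I_k,I_l}}$. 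In particular $\omega_I-d_I\in(U_-)_{3,1}$, and the asserted identity is equivalent to
\[
\sum_{I\in\mathcal I_3}(\omega_I-d_I)\otimes\theta_I(v)=0 .
\]

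Next I would split the left-hand side into the three sums $A_{12},A_{13},A_{23}$ coming from the three summands above, the subscript recording the pair $\{k,l\}$ carrying the factor $D_{\{k,l\}}$, and show that each one vanishes on its own. The mechanism is a mismatch of symmetries. On the one hand the factor $\varepsilon_{I_k,I_l}\partial_{t_{I_k,I_l}}d_{I_m}$, where $\{k,l,m\}=\{1,2,3\}$, is unchanged when one interchanges $I_k$ and $I_l$: swapping the two ordered pairs is an even permutation of their four indices, so $\varepsilon_{I_k,I_l}=\varepsilon_{I_l,I_k}$, while the complementary index satisfies $t_{I_k,I_l}=t_{I_l,I_k}$ and $d_{I_m}$ is untouched (see Section \ref{S1}). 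On the other hand, the hypothesis $x.\Phi=0$ places us in the setting of Remark \ref{thetaction}, so that $\theta_{g(I)}=(-1)^{\ell(g)}\theta_I$ for all $g\in B_3$; since the transposition interchanging positions $k$ and $l$ has odd length, $\theta_I$ changes sign under that swap. Reindexing $A_{kl}$ by this involution of $\mathcal I_3$ therefore yields $A_{kl}=-A_{kl}$, hence $A_{kl}=0$. The fixed points of the involution, namely the $I$ with $I_k=I_l$, cause no trouble since there $\varepsilon_{I_k,I_l}=0$ (equivalently $\omega_I=d_I=0$), so they contribute nothing to the sum.

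The only genuine bookkeeping, and the step most prone to sign errors, is the verification that the complete factor $\varepsilon_{I_k,I_l}\partial_{t_{I_k,I_l}}d_{I_m}$ is invariant under interchanging $I_k$ and $I_l$ while $\theta_I$ is anti-invariant; once this is settled the three cancellations are identical in form and the lemma follows. I do not expect any conceptual difficulty beyond this, the role of the assumption $x.\Phi=0$ being solely to supply, through Remark \ref{thetaction}, the $B_3$-equivariance $\theta_{g(I)}=(-1)^{\ell(g)}\theta_I$ that drives the argument.
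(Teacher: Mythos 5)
Your proof is correct and takes essentially the same approach as the paper: both expand $\omega_I-d_I$ via Definition \ref{defomega} into the three terms $D_{\{k,l\}}(I)\,d_{I_m}$ and cancel the correction by playing the invariance of $\varepsilon_{I_k,I_l}\partial_{t_{I_k,I_l}}d_{I_m}$ under swapping $I_k$ and $I_l$ against the $B_3$-antisymmetry $\theta_{g(I)}=(-1)^{\ell(g)}\theta_I$, which the hypothesis $x.\Phi=0$ supplies through Remark \ref{thetaction}. The only cosmetic difference is that the paper cancels the full three-term coefficient at once by pairing $\theta_{I_1,I_2,I_3}$ with $\theta_{I_3,I_2,I_1}$, whereas you dispose of each sum $A_{kl}$ separately with its own transposition.
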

\begin{proof}
Indeed we have
\begin{align*}
 \sum_{I\in \mathcal I_3}\omega_I\otimes \theta_I(v)&=\sum_{I\in \mathcal I_3}d_I\otimes \theta_I(v)\\ 
 & +\sum_{I_1,I_2,I_3}(-\frac{1}{2}
\varepsilon_{I_1,I_2}\partial_{t_{I_1,I_2}}d_{I_3}+\frac{1}{2}
\varepsilon_{I_1,I_3}\partial_{t_{I_1,I_3}}d_{I_2}- \frac{1}{2}
\varepsilon_{I_2,I_3}\partial_{t_{I_2,I_3}}d_{I_1})\otimes \theta_{I_1,I_2,I_3}(v) \end{align*}
and the last sum vanishes since the coefficients of $\theta_{I_1,I_2,I_3}(v)$ and  $\theta_{I_3,I_2,I_1}(v)$ coincide.
\end{proof}

\begin{theorem}\label{grado3} Let us assume that $x.\Phi=0$ for every $x\in L_0$. Then $\varphi$ is a morphism of Verma modules if and only if for every $H,L\in \mathcal{I}_1$, every permutation $(p,q,a,b,c)$ of $[5]$ and every $v\in F(\lambda)$, the following equations hold:
\begin{align}
\label{deg3n1}&  \chi_{L\in B_1Q}\theta^p_{H}(v)+\frac{1}{2}\varepsilon_{pqabc}\sum_{(\alpha,\beta,\gamma)\in C(a,b,c)}
\big(-(x_p\partial_{\gamma}.\theta_{\alpha\beta,H,L})(v)+2 x_p\partial_\gamma. (\theta_{\alpha \beta, H,L}(v))\big)=0\\
\label{deg3n2}&\frac{1}{4}\theta_{ab,bc,cq}(v)+\frac{1}{4}\theta_{ac,cb,bq}(v)+\frac{1}{2}\varepsilon_{pqabc}\sum_{(\alpha,\beta,\gamma)\in C(a,b,c)}\big(-(x_p\partial_\gamma.\theta^a_{\alpha\beta})(v)+2x_p\partial_\gamma.(\theta^a_{\alpha\beta}(v))\big)=0\\
\label{deg3n3}& \sum_{(\alpha, \beta, \gamma)\in C(a,b,c)}x_p\partial_{\gamma}.(\theta^p_{\alpha \beta}(v))=0\\
\label{deg3n4}& \varepsilon_{pqabc}\sum_{(\alpha, \beta, \gamma)\in C(a,b,c)}x_p\partial_{\gamma}.(\theta^q_{\alpha \beta}(v))-\frac{1}{2}\theta_{ab,bc,ca}(v)=0.
\end{align}
\end{theorem}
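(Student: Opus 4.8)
The plan is to start from Proposition \ref{morphisms}. Since we assume $x.\Phi=0$ for every $x\in L_0$, the map $\varphi$ is a morphism of Verma modules precisely when $t\Phi(v)=0$ for all $t\in L_1$ and all $v\in F(\lambda)$. The elements $x_pd_{pq}$ lie in $L_1$ and generate it as an $L_0$-module, and condition (a) makes the assignment $(t,v)\mapsto t\Phi(v)$ behave $L_0$-equivariantly; hence imposing $x_pd_Q\Phi(v)=0$ for every permutation $(p,q,a,b,c)$ of $[5]$, with $Q=(p,q)$, and every $v\in F(\lambda)$ is equivalent to condition (b). So the theorem reduces to expanding the single expression $x_pd_Q\Phi(v)$ and reading off when it vanishes. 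Before differentiating I would invoke Lemma \ref{fybj} to replace the cubic summand $\sum\omega_I\otimes\theta_I(v)$ by $\sum_{I\in\mathcal I_3}d_I\otimes\theta_I(v)$, eliminating the $\partial$-corrections concealed inside the $\omega_I$.

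The computation of $x_pd_Q\Phi(v)$ rests on three elementary rules in $U$: the bracket $[x_pd_Q,d_{kl}]=\varepsilon_{pqkl}\,x_p\partial_{t_{pqkl}}$ lands in $L_0$; the bracket $[x_pd_Q,\partial_t]=-\delta_{tp}d_Q$ lands in $L_{-1}$; and $x_pd_Q$, being in $L_1$, annihilates $W$. Applying $x_pd_Q$ to the cubic part means hitting one $d$-factor at a time: each hit releases an $L_0$-element $x_p\partial_s$, which I then commute through the two surviving $d$-factors (each such commutator again yields a single $d$-form) until it finally acts on $W$ through the $\theta$'s, producing the operators $\Delta_{s\rightarrow r}^h$ and $D_{r\rightarrow s}^l$ of Remark \ref{thetaction}. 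This is exactly the content packaged by Lemma \ref{diquaedila}, which I would apply directly. Applying $x_pd_Q$ to the summand $\sum_{t,H}\partial_t\omega_H\otimes\theta^t_H(v)$ yields, through the two commutators above, a term $-\sum_H d_Qd_H\otimes\theta^p_H(v)$ together with terms $\sum_{t,H}\varepsilon_{Q,H}\,\partial_t\otimes(x_p\partial_{t_{Q,H}}.\theta^t_H(v))$. In all cases the $d$-part of the output is at most quadratic, so $x_pd_Q\Phi(v)$ lies in $(U_-)_2\otimes W$.

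I would then expand this output in the basis of $(U_-)_2$ provided by Corollary \ref{basi}, namely $\{\omega_{H,L}:(H,L)\in\mathcal I_2/B_2\}\cup\{\partial_t:t\in[5]\}$, using $d_Hd_L=\omega_{H,L}+\tfrac12\varepsilon_{H,L}\partial_{t_{H,L}}$ to split each quadratic monomial into its $\omega$-part and its $\partial$-correction. By linear independence, $x_pd_Q\Phi(v)=0$ is equivalent to the separate vanishing of every coefficient, and this is what produces the four equations. The coefficient of a genuinely two-pair basis vector $\omega_{H,L}$ couples the cubic datum $\theta_{\alpha\beta,H,L}$ with the boundary term $\theta^p_H$ coming from the $[x_pd_Q,\partial_p]$ commutator, and yields Equation \eqref{deg3n1}. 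The coefficients of the $\partial_t$ then distribute according to the position of $t$ relative to the pair $\{p,q\}$: the $\partial_p$-coefficient involves only $\theta^p_{\alpha\beta}$ and gives Equation \eqref{deg3n3}; the $\partial_q$-coefficient couples $\theta^q_{\alpha\beta}$ with a cubic contribution $\theta_{ab,bc,ca}$ and gives Equation \eqref{deg3n4}; and the $\partial_a$-coefficient, for a free letter $a$, couples $\theta^a_{\alpha\beta}$ with the cubic contributions $\theta_{ab,bc,cq}$ and $\theta_{ac,cb,bq}$ and gives Equation \eqref{deg3n2}. The cyclic sums over $C(a,b,c)$ arise because $\varepsilon_{Q,I}$ and $t_{Q,I}$ are nonzero exactly when the pair $I$ uses two of the three letters $a,b,c$, and summing over which two are used reorganizes into the cyclic triples; the factor $\varepsilon_{pqabc}$ and the constants $\tfrac14,\tfrac12$ are produced by the $B_3$-averaging of Lemma \ref{fybj} and by the $\tfrac12\varepsilon_{H,L}$ of the splitting. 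The converse is immediate: if the four equations hold for all admissible data, every coefficient vanishes, hence $x_pd_Q\Phi(v)=0$ and $\varphi$ is a morphism.

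The hard part will be the bookkeeping of the $\partial_t$-coefficients. One must track three distinct origins of $\partial$-terms --- the $L_0$-elements $x_p\partial_s$ that reach $W$ in the cubic part, the $\partial$-corrections created by rewriting each $d_Hd_L$ in the $\omega$-basis, and the intrinsic $\partial_t$-part of $\Phi$ carrying the $\theta^t_H$ --- and show that, after matching the signs $\varepsilon_{pqabc}$ and the constants $\tfrac14$ and $\tfrac12$, these contributions separate cleanly into the three independent families indexed by $\partial_p$, $\partial_q$ and $\partial_a$ stated above. Establishing this clean separation, rather than any single commutator evaluation, is where the real care is required.
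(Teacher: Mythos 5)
Your proposal follows the paper's own proof essentially step for step: the reduction via Proposition \ref{morphisms} to the vanishing of $x_pd_Q\Phi(v)$, the simplification of the cubic summand by Lemma \ref{fybj} and its expansion by Lemma \ref{diquaedila}, the re-expression of the result in the basis $\{\omega_{H,L}\}\cup\{\partial_t\}$ of $(U_-)_2$ via $d_Hd_L=\omega_{H,L}+\tfrac{1}{2}\varepsilon_{H,L}\partial_{t_{H,L}}$, and the extraction of Equations \eqref{deg3n1}--\eqref{deg3n4} as the coefficients of $\omega_{H,L}$, $\partial_a$, $\partial_p$ and $\partial_q$, respectively. The one step you leave implicit --- rewriting the residual $\theta^p_{bc}$ term in the $\partial_a$-coefficient through the $L_0$-equivariance identity so that \eqref{deg3n2} takes the dualizable form of Remark \ref{lemdual2} --- is precisely how the paper finishes, so your outline is correct and matches the paper's route.
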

\begin{proof} By Proposition \ref{morphisms} we need to compute $x_pd_Q\Phi(v)$ for $v\in F(\lambda)$. We compute the different summands separately.
Using Lemma \ref{fybj} and Lemma \ref{diquaedila} we have
\begin{align*} x_pd_Q &\sum_{(I,J,K)\in\mathcal{I}_3/B_3}\omega_{I,J,K}\otimes \theta_{I,J,K}(v)=
\frac{1}{48}\sum_{I,J,K\in\mathcal{I}_3}\omega_{I,J,K}\otimes \theta_{I,J,K}(v)\\
 &=\frac{1}{48} x_pd_Q \sum_{I,J,K}d_I d_J d_K\otimes \theta_{I,J,K}(v)\\
 &=\frac{1}{48}\sum_{I,J,K}(\varepsilon_{Q,I}x_p\partial _{t_{Q,I}}d_Jd_K-d_I \varepsilon_{Q,J}\,x_p\partial_{t_{Q,J}}d_K+d_Id_J\varepsilon_{Q,K}x_p\partial_{t_{Q,K}})\otimes {\theta_{I,J,K}(v)}\\
 &=\frac{1}{48} \sum_{H,L}d_Hd_L\otimes 2\varepsilon_{pqabc}\sum_{\alpha\beta\gamma}\Big(D^2_{\gamma \rightarrow p}\theta_{\alpha\beta,H,L}(v)+
2D^3_{\gamma \rightarrow p}\theta_{\alpha \beta, H,L}(v)+3x_p\partial_\gamma.(\theta_{\alpha \beta, H,L}(v))\Big),
\end{align*}
where the sums run over $I,J,K\in \mathcal I_1$ and $(\alpha,\beta,\gamma)\in C(a,b,c)$. 

Recalling that $d_Hd_L=\omega_{H,L}+\frac{1}{2}\varepsilon_{H,L}\partial_{t_{H,L}}$  we have:
\begin{align*}
x_pd_Q &\sum_{(I,J,K)\in \mathcal I_3/B_3}\omega_{I,J,K}\otimes \theta_{I,J,K}(v)\\
&= \frac{1}{48}\sum_{H,L}\omega_{H,L}\otimes 2\varepsilon_{pqabc}\sum_{\alpha\beta\gamma }\Big(D^2_{\gamma\rightarrow p}\theta_{\alpha\beta,H,L}(v)+
2D^3_{\gamma \rightarrow p}\theta_{\alpha \beta, H,L}(v)+3x_p\partial_\gamma.( \theta_{\alpha \beta, H,L}(v))\Big)\\ 
&\,\,\,+\frac{1}{48}\sum_{H,L}\partial_{t_{H,L}}\otimes \varepsilon_{H,L}\varepsilon_{pqabc}\sum_{\alpha\beta\gamma }\Big(D^2_{\gamma\rightarrow p}\theta_{\alpha\beta,H,L}(v)
+2D^3_{\gamma\rightarrow p}\theta_{\alpha \beta, H,L}(v)+3x_p\partial_\gamma.(\theta_{\alpha \beta, H,L}(v))\Big)\\
&=\frac{1}{48}\sum_{(H,L)\in\mathcal{I}_2/B_2}\omega_{H,L}\otimes 2\varepsilon_{pqabc}\sum_{\alpha\beta\gamma }\Big(12D^2_{\gamma\rightarrow p}\theta_{\alpha\beta,H,L}(v)+
12D^3_{\gamma\rightarrow p}\theta_{\alpha \beta, H,L}(v)+24x_p\partial_\gamma.( \theta_{\alpha \beta, H,L}(v))\Big)\\ 
&\,\,\,+\frac{1}{48}\sum_{(H,L)\in\mathcal{I}_2/B_2}\partial_{t_{H,L}}\otimes \varepsilon_{H,L}\varepsilon_{pqabc}\sum_{\alpha\beta\gamma }\Big(-4D^2_{\gamma\rightarrow p}\theta_{\alpha\beta,H,L}(v)+4 D^3_{\gamma\rightarrow p}\theta_{\alpha \beta, H,L}(v)\Big)\\
&=\sum_{(H,L)\in\mathcal{I}_2/B_2}\omega_{H,L}\otimes \frac{1}{2}\varepsilon_{pqabc}\sum_{\alpha\beta\gamma }
\Big(-(x_p\partial_{\gamma}.\theta_{\alpha\beta,H,L})(v)+
2x_p\partial_\gamma.( \theta_{\alpha \beta, H,L}(v))\Big)\\
&\,\,\,+\partial_q\otimes- \frac{1}{2} \theta_{ab,bc,ca}(v) +\sum_{\alpha\beta\gamma}\partial_\alpha\otimes \frac{1}{4} (\theta_{\alpha\beta,\beta\gamma,\gamma q}(v)+\theta_{\alpha\gamma,\gamma \beta,\beta q}(v)).
\end{align*}

We also need the following computation
\begin{align*}
 x_pd_Q \sum_{t\in [5]}\sum_{I\in \mathcal I_1/B_1} &\partial_t\omega_I\otimes \theta^t_I(v)=-\frac{1}{2}\sum_{I\in \mathcal I_1}d_Qd_I\otimes \theta^p_I(v)+\frac{1}{2}\sum_{t}\partial_t x_pd_Q\sum_I d_I\otimes \theta^t_I(v)\\
&=-\frac{1}{2}\sum_Id_Qd_I\otimes \theta^p_I(v)+\sum_t\partial_t\otimes \varepsilon_{pqabc}\sum_{\alpha\beta\gamma}x_p\partial_\gamma.( \theta^t_{\alpha\beta}(v))\\
&=\frac{1}{2}\sum_I (\omega_{I,Q}-\frac{1}{2}\varepsilon_{Q,I}\partial_{t_{Q,I}})\otimes \theta^p_I(v)+\sum_t\partial_t\otimes \varepsilon_{pqabc}\sum_{\alpha\beta\gamma}x_p\partial_\gamma.( \theta^t_{\alpha\beta}(v))\\
&= \sum_{I\in \mathcal I_1/B_1} (\omega_{I,Q}\otimes \theta^p_I(v)-\partial_{t_{Q,I}}\otimes\frac{1}{2}\varepsilon_{Q,I}\theta^p_I(v))+\sum_t\partial_t\otimes \varepsilon_{pqabc}\sum_{\alpha\beta\gamma}x_p\partial_\gamma.( \theta^t_{\alpha\beta}(v)).
\end{align*}

Now we can use these two relations and compute
\begin{align*}
x_p&d_Q\Phi(v)=x_pd_Q\Big(\sum_{(I,J,K)\in \mathcal I_3/B_3}\omega_{I,J,K}\otimes \theta_{I,J,K}(v)+\sum_{t\in[5]}\sum_{I\in \mathcal I_1/B_1}\partial_t\omega_I\otimes \theta^t_{I}(v)\Big)\\
&=\sum_{(H,L)\in\mathcal{I}_2/B_2}\omega_{H,L}\otimes \frac{1}{2}\varepsilon_{pqabc}\sum_{\alpha\beta\gamma }
\Big(-(x_p\partial_{\gamma}.\theta_{\alpha\beta,H,L})(v)+
2x_p\partial_\gamma.( \theta_{\alpha \beta, H,L}(v))\Big)\\
&\,\,\,+\partial_q\otimes- \frac{1}{2} \theta_{ab,bc,ca}(v) +\sum_{\alpha\beta\gamma}\partial_\alpha\otimes \frac{1}{4} (\theta_{\alpha\beta,\beta\gamma,\gamma q}(v)+\theta_{\alpha\gamma,\gamma \beta,\beta q}(v))\\
&+\sum_{I\in \mathcal I_1/B_1} (\omega_{I,Q}\otimes \theta^p_I(v)-\partial_{t_{Q,I}}\otimes\frac{1}{2}\varepsilon_{Q,I}\theta^p_I(v))+\sum_t\partial_t\otimes \varepsilon_{pqabc}\sum_{\alpha\beta\gamma}x_p\partial_\gamma.( \theta^t_{\alpha\beta}(v))\\
&=\sum_{(H,L)\in\mathcal{I}_2/B_2}\omega_{H,L}\otimes \big(\chi_{L\in B_1Q}\theta^p_{H}(v)+\varepsilon_{pqabc}\sum_{\alpha\beta\gamma}
\big(-\frac{1}{2}(x_p\partial_{\gamma}.\theta_{\alpha\beta,H,L})(v)+
x_p\partial_\gamma.(\theta_{\alpha \beta, H,L}(v))\big)\\
&\hspace{5mm}+\partial_p\otimes\varepsilon_{pqabc}\sum_{\alpha\beta\gamma}x_p\partial_\gamma.( \theta^p_{\alpha\beta}(v))+\partial_q\otimes \big(\varepsilon_{pqabc}\sum_{\alpha\beta\gamma}x_p\partial_\gamma.( \theta^q_{\alpha\beta}(v))-\frac{1}{2}\theta_{ab,bc,ca}(v)\big)\\
&\hspace{5mm}+\sum_{\alpha\beta\gamma}\partial_\alpha \otimes\Big(\frac{1}{4}\theta_{\alpha\beta,\beta\gamma,\gamma q}(v)+\frac{1}{4}\theta_{\alpha\gamma,\gamma\beta,\beta q}(v)+\varepsilon_{pqabc}\big(-\frac{1}{2}\theta^p_{\beta \gamma}(v)+x_p\partial_c.( \theta^\alpha_{ab}(v))\\&\hspace{30mm}+x_p \partial_b .(\theta^\alpha_{ca}(v))+x_p\partial_a .(\theta^\alpha_{bc}(v))\big)\Big).
\end{align*}
This completes the proof of Equations \eqref{deg3n1}, \eqref{deg3n3} and \eqref{deg3n4}. In order to deduce Equation \eqref{deg3n2} we consider the coefficient of $\partial_a$ in the previous equation (the coefficients of $\partial_b$ and $\partial_c$ provide equivalent conditions) and we have
\begin{align*}
 \frac{1}{4}&\theta_{ab,bc,cq}(v)+\frac{1}{4}\theta_{ac,cb,bq}(v)+\varepsilon_{pqabc}\big(-\frac{1}{2}\theta^p_{bc}(v)+x_p\partial_c .(\theta^a_{ab}(v))+x_p \partial_b.( \theta^a_{ca}(v))+x_p\partial_a. (\theta^a_{bc}(v))\big)\\
 &=\frac{1}{4}\theta_{ab,bc,c q}(v)+\frac{1}{4}\theta_{ac,cb,bq}(v)+\varepsilon_{pqabc}\Big(-\frac{1}{2}\big((x_p\partial_a.\theta^a_{bc})(v)+(x_p\partial_b. \theta^a_{ca})(v)+(x_p\partial_c. \theta^a_{ab})(v)\big)\\ &\hspace{5mm}+x_p\partial_c .(\theta^a_{ab}(v))+x_p \partial_b.( \theta^a_{ca}(v))+x_p\partial_a. (\theta^a_{bc}(v))\Big)\\
 &=\frac{1}{4}\theta_{ab,bc,cq}(v)+\frac{1}{4}\theta_{ac,cb,bq}(v)+\frac{1}{2}\varepsilon_{pqabc}\sum_{\alpha\beta\gamma}\big(-(x_p\partial_\gamma.\theta^a_{\alpha\beta})(v)+2x_p\partial_\gamma.(\theta^a_{\alpha\beta}(v))\big).
 \end{align*}
\end{proof}
\begin{corollary}\label{degree3dual}
 Let $\varphi:M(\lambda)\rightarrow M(\mu)$ be a morphism of Verma modules of degree 3 associated to 
 \[\Phi=\sum_{I\in \mathcal I_3/B_3}\omega_I\otimes \theta_I+\sum_{t\in [5]}\sum_{I\in \mathcal I_1/B_1} \partial_t\omega_I \otimes \theta^t_I.\]
 Then the linear map $\psi:M(\mu^*)\rightarrow M(\lambda^*)$ associated to 
 \[
  \Psi=\sum_{I\in \mathcal I_3/B_3}\omega_I\otimes \theta_I^*+\sum_{t\in [5]}\sum_{I\in \mathcal I_1/B_1} \partial_t\omega_I \otimes (-\theta^t_I)^*
 \]
 is also a morphism of Verma modules.
\end{corollary}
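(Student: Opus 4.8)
The plan is to deduce the statement from Theorem \ref{grado3} in exactly the way Corollary \ref{degree2dual} was deduced from Theorem \ref{equationdeg2}, by feeding each of the four defining relations of a degree-$3$ morphism into the duality Proposition \ref{lemdual} (in the form of Remark \ref{lemdual2}). Write $\tilde\theta_I=\theta_I^*$ and $\tilde\theta^t_I=-(\theta^t_I)^*$ for the data attached to $\Psi$, so that $\Psi=\sum_I\omega_I\otimes\tilde\theta_I+\sum_{t,I}\partial_t\omega_I\otimes\tilde\theta^t_I$. First I would check that $\psi$ satisfies condition (a) of Proposition \ref{morphisms}, i.e.\ $L_0.\Psi=0$: the assignment $\theta\mapsto\theta^*$ is an isomorphism of $L_0$-modules $\Hom(V,W)\to\Hom(W^*,V^*)$, as one verifies directly that $(g.\theta)^*=g.(\theta^*)$ from the definitions of the actions, and $L_0$ preserves the filtration by $\ell(T)$, so the sign $(-1)^{\ell(T)}$ is constant on the $L_0$-stable pieces and commutes with the action; hence $L_0.\Phi=0$ forces $L_0.\Psi=0$. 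By Theorem \ref{grado3} applied to $\psi$ it then remains to verify Equations \eqref{deg3n1}--\eqref{deg3n4} with every $\theta$ replaced by the corresponding $\tilde\theta$ and $v$ replaced by an arbitrary $f\in W^*$.

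For Equations \eqref{deg3n1} and \eqref{deg3n2} this is a direct application of Remark \ref{lemdual2}. Indeed, in \eqref{deg3n1} the coefficient $\theta^p_H$ occurs as a pure term while each $\theta_{\alpha\beta,H,L}$ occurs through the combination $-(x_p\partial_\gamma.\theta_{\alpha\beta,H,L})(v)+2x_p\partial_\gamma.(\theta_{\alpha\beta,H,L}(v))$, which is precisely the shape $2z.(\sigma(v))-(z.\sigma)(v)$ governed by Remark \ref{lemdual2} with $z=x_p\partial_\gamma$; symmetrically, in \eqref{deg3n2} the degree-$3$ terms $\theta_{ab,bc,cq},\theta_{ac,cb,bq}$ are pure while the $\theta^a_{\alpha\beta}$ occur in the same balanced shape. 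Applying Remark \ref{lemdual2} sends the pure $\theta$'s to $\theta^*$ and the balanced $\sigma$'s to $-\sigma^*$; matching this against $\tilde\theta_I=\theta_I^*$ and $\tilde\theta^t_I=-(\theta^t_I)^*$ reproduces \eqref{deg3n1} and \eqref{deg3n2} for $\psi$, up to an overall factor $-1$ which of course does not affect the vanishing.

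Equations \eqref{deg3n3} and \eqref{deg3n4} require one extra observation, and this is where I expect the only real friction. Here the $\partial$-type coefficients $\theta^p_{\alpha\beta}$ and $\theta^q_{\alpha\beta}$ appear only through the unbalanced expression $x_p\partial_\gamma.(\theta(v))$, which by itself does not fit the hypotheses of Proposition \ref{lemdual}. The key point is that for the index configuration at hand one has $p,q\notin\{a,b,c\}=\{\alpha,\beta,\gamma\}$, so the $L_0$-equivariance formula of Remark \ref{thetaction} gives $x_p\partial_\gamma.\theta^p_{\alpha\beta}=0$ and $x_p\partial_\gamma.\theta^q_{\alpha\beta}=0$ as elements of $\Hom(V,W)$ (every $\Delta$- and $D$-summand carries a Kronecker delta that vanishes). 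Consequently $x_p\partial_\gamma.(\theta^p_{\alpha\beta}(v))=\theta^p_{\alpha\beta}(x_p\partial_\gamma.v)$, and likewise for $\theta^q$, so each such term may be rewritten symmetrically as $\frac12\big(x_p\partial_\gamma.(\theta(v))+\theta(x_p\partial_\gamma.v)\big)$, which is exactly the balanced shape of Proposition \ref{lemdual}. Applying the proposition and then using the same operator-vanishing for the dualized family $\tilde\theta$ (valid by the identical index argument, since $L_0.\Psi=0$ has been established) collapses the resulting symmetric expression back to $x_p\partial_\gamma.(\tilde\theta(f))$, giving \eqref{deg3n3} and \eqref{deg3n4} for $\psi$; note that in \eqref{deg3n4} the pure degree-$3$ term $-\frac12\theta_{ab,bc,ca}(v)$ dualizes harmlessly to $-\frac12\theta_{ab,bc,ca}^*(f)=-\frac12\tilde\theta_{ab,bc,ca}(f)$.

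In summary, the whole argument is a bookkeeping of signs matching the shape of $\Psi$, and the only genuine obstacle is not any of the four verifications in isolation but the preliminary structural identity $x_p\partial_\gamma.\theta^{p}_{\alpha\beta}=x_p\partial_\gamma.\theta^{q}_{\alpha\beta}=0$ needed for \eqref{deg3n3} and \eqref{deg3n4}, which converts an apparently non-symmetric relation into one to which Proposition \ref{lemdual} applies. Once this is in place the four equations fall out uniformly, establishing both Corollary \ref{degree3dual} and Conjecture \ref{conjdual} in degree $3$.
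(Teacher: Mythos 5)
Your proof is correct and takes essentially the same route as the paper, whose proof consists precisely of applying Remark \ref{lemdual2} to the four equations of Theorem \ref{grado3} (after noting, as you do, that $L_0.\Psi=0$ follows from $L_0.\Phi=0$ since $\theta\mapsto\theta^*$ is $L_0$-equivariant and the sign $(-1)^{\ell(T)}$ is constant on each $L_0$-stable summand). The one point you rightly flag as needing care --- the vanishing $x_p\partial_\gamma.\theta^p_{\alpha\beta}=x_p\partial_\gamma.\theta^q_{\alpha\beta}=0$, which lets you symmetrize the unbalanced terms of Equations \eqref{deg3n3} and \eqref{deg3n4} into the shape required by Proposition \ref{lemdual} --- is exactly the device the paper makes explicit in the degree-one case (the remark following Equation \eqref{equazione1}) and leaves implicit here, so you have simply spelled out a step the paper calls immediate.
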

\begin{proof}
 This is an immediate consequence of Remark \ref{lemdual2} and Theorem \ref{grado3}.
\end{proof}

If we consider Equation \eqref{deg3n1} on a highest weight vector $s\in F(\lambda)$ (and we multiply it by $2\varepsilon_{pqabc}$) we obtain the following equation:

\begin{equation}\label{maxdeg3}
 2\varepsilon_{pqabc}\chi_{L\in B_1Q}\theta^p_{H}(s)+\sum_{\alpha\beta\gamma}
\big((-1)^{\chi_{p>\gamma}}(x_p\partial_{\gamma}.\theta_{\alpha\beta,H,L})(s)+2\chi_{p>\gamma}x_p\partial_\gamma.(\theta_{\alpha\beta,H,L}(s))\big)=0.
\end{equation}

\begin{remark}\label{asdf}
 If  $x_p\partial_c.\theta_{ab,H,L}$ has the leading weight of $\varphi$ then $\chi_{p>\gamma}x_p\partial_\gamma.(\theta_{\alpha\beta,H,L}(s))=0$ for all $(\alpha,\beta,\gamma)\in C(a,b,c)$ and so we obtain the following
 \begin{equation}
  2\varepsilon_{pqabc}\chi_{L\in B_1Q}\theta^p_{H}(s)+\sum_{\alpha\beta\gamma}
(-1)^{\chi_{p>\gamma}}(x_p\partial_{\gamma}.\theta_{\alpha\beta,H,L})(s)=0.
 \end{equation}
\end{remark}
This equation has several immediate consequences.
\begin{lemma} \label{a,b,c,d}If $a,b,c,d\in [5]$ are distinct then $\theta_{ab,ac,ad}$ does not have the leading weight of $\varphi$.
\end{lemma}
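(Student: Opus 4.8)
The plan is to use Equation \eqref{maxdeg3}, or rather its streamlined form in Remark \ref{asdf}, applied to a carefully chosen permutation $(p,q,a,b,c)$ of $[5]$ together with appropriate $H,L\in\mathcal I_1$. The key observation is that $\theta_{ab,ac,ad}$ contains the repeated letter $a$ in all three pairs, and I want to exhibit a derivation $x_p\partial_\gamma$ that acts trivially on the weight-space argument (so that the $\chi_{p>\gamma}$ terms vanish) while producing, after the $B_3$-reordering, a contradiction with the maximality of the leading weight of $\theta_{ab,ac,ad}$.

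Concretely, I would argue as follows. Suppose for contradiction that $\theta_{ab,ac,ad}$ has the leading weight of $\varphi$. Since $a,b,c,d$ are four distinct elements of $[5]$, there is a unique fifth index; I would name the five indices so as to apply Remark \ref{asdf} with the triple $C(a,b,c)$ consisting of letters that, when the derivation $x_p\partial_\gamma$ is applied, send one of the pairs $ab,ac,ad$ to a strictly larger weight in the dominance order. The point is that for the leading term $\theta_{ab,ac,ad}(s)$, Corollary \ref{dominancetheta} forces every $\theta_{I}(s)$ with $\lambda(\theta_I)>\lambda(\theta_{ab,ac,ad})$ to vanish; so applying $x_p\partial_\gamma$ and reading off the surviving terms in Remark \ref{asdf} should leave only a nonzero multiple of $\theta_{ab,ac,ad}(s)$ on one side and zero on the other, the desired contradiction.

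The main work is bookkeeping: I must choose $p,\gamma$ and the pair $K=(H,L)$ so that in the sum $\sum_{\alpha\beta\gamma}(-1)^{\chi_{p>\gamma}}(x_p\partial_\gamma.\theta_{\alpha\beta,H,L})(s)$ all three summands, after using $\theta_{g(I)}^T=(-1)^{\ell(g)}\theta_I^T$ to reorder indices into the canonical representative, collapse to terms involving only $\theta_{ab,ac,ad}$ and its $B_3$-images, with the other summands killed either because they raise the weight (hence vanish by maximality) or because they produce a repeated or reversed pair (hence vanish by the Corollary following the sign proposition, which gives $\omega_I=0$, and correspondingly $\theta_I=0$). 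I expect the $\theta^p_H$ term to vanish for the chosen $L$ by the indicator $\chi_{L\in B_1Q}$, so that the equation reduces purely to the $\theta$-part.

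The hard part will be verifying that the surviving coefficient of $\theta_{ab,ac,ad}(s)$ is genuinely nonzero rather than an accidental cancellation: because all three pairs share the letter $a$, the three contributions in $C(a,b,c)$ are highly symmetric, and I must check the signs $(-1)^{\chi_{p>\gamma}}$ together with the reordering signs $(-1)^{\ell(g)}$ do not conspire to cancel. I would organize this by fixing an explicit labeling of $a,b,c,d$ and the fifth index as concrete elements of $[5]$ (exploiting that the argument is essentially combinatorial in the relative order of the indices, and invoking Corollary \ref{degree3dual} to halve the cases via the duality $\nu\mapsto-\nu^*$ if the sign analysis becomes delicate for some orderings), and then simply reading off Remark \ref{asdf} in each case to confirm the nonvanishing coefficient.
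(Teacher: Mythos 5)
Your overall strategy is the paper's: apply Remark \ref{asdf} once, with a single well-chosen permutation $(p,q,a,b,c)$ and pairs $H,L$, so that the $\theta^p_H$ term is killed by the indicator $\chi_{L\in B_1Q}$, one cyclic summand dies because substituting $\gamma=a$ for the letter $p$ creates the repeated pair $(a,a)$ (whence $\omega=0$ and the corresponding $\theta$ vanishes), and the two surviving summands, after $B_3$-reordering, are both multiples of $\theta_{ab,ac,ad}(s)$. The paper realizes this with the concrete data $H=(a,p)$, $q=d$, $L=(a,d)$, where $p$ is the fifth letter; the resulting identity is $\bigl((-1)^{\chi_{p>c}}+(-1)^{\chi_{p>b}}\bigr)\theta_{ab,ac,ad}(s)=0$. (Note also that the raising mechanism is not that $x_p\partial_\gamma$ ``sends a pair to a larger weight'' in some auxiliary sense: it simply replaces the letter $p$ in $H=(a,p)$ by $\gamma$, reproducing $\theta_{ab,ac,ad}$ itself or zero.)

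There is, however, a genuine gap at exactly the point you defer as bookkeeping: the coefficient $(-1)^{\chi_{p>c}}+(-1)^{\chi_{p>b}}$ vanishes identically whenever $p$ lies strictly between $b$ and $c$, and then the equation is vacuous. For instance, with $a=1$, $b=3$, $c=5$, $d=2$, so $p=4$, one gets $1+(-1)=0$ and learns nothing. The missing idea is the preliminary normalization: since $\theta_{ab,ac,ad}$ is, up to the sign $(-1)^{\ell(g)}$ of the $B_3$-action, symmetric in the roles of $b,c,d$, and since among the three letters $b,c,d$ at least two lie on the same side of $p$, one may rename $b,c,d$ so that $p>b,c$ or $p<b,c$, which forces the coefficient to be $\pm 2\neq 0$ and yields $\theta_{ab,ac,ad}(s)=0$, contradicting the leading-weight assumption. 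You correctly anticipate the danger of sign cancellation, but you never exhibit a choice that avoids it, and your fallback (case-by-case labeling plus the duality of Corollary \ref{degree3dual}) is both heavier than needed and not guaranteed to terminate without this renaming observation; duality in particular plays no role in the paper's proof of this lemma. Once you add the WLOG on $b,c$ and the specific data $H=(a,p)$, $q=d$, $L=(a,d)$, your plan collapses to precisely the paper's one-line argument.
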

\begin{proof}
 Without loss of generality we can assume that the fifth element $p$ is either bigger than both $b$ and $c$ or smaller than both $b$ and $c$. Otherwise we can rename $b,c,d$ accordingly. Remark \ref{asdf} applies with $H=(a,p)$, $q=d$ and $L=(a,d)$ so we have
 \[
  (-1)^{\chi_{p>c}}\theta_{ab,ac,ad}(s)+(-1)^{\chi_{p>b}}\theta_{ab,ac,ad}(s)=0.
 \]
\end{proof}
\begin{lemma}\label{a,b,c}
If $a,b,c\in [5]$ are distinct then $\theta_{ab,bc,ca}$ does not have the leading weight of $\varphi$.
 \end{lemma}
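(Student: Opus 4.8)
The plan is to mimic the strategy of Lemma \ref{a,b,c,d}, applying the key identity from Remark \ref{asdf} to the configuration $\theta_{ab,bc,ca}$, but the obstacle is that this configuration is less symmetric than $\theta_{ab,ac,ad}$: here the three indices $a,b,c$ each appear in exactly two of the three pairs, forming a ``cycle'', so the cancellation argument will be more delicate and will likely require splitting into several cases depending on the relative order of the missing indices.

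$\,$

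Let me think about this more carefully.

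$\,$

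First, I should identify what indices are involved. The symbol $\theta_{ab,bc,ca}$ uses three distinct letters $a,b,c \in [5]$, so there are two remaining letters, call them $p$ and $q$, with $\{a,b,c,p,q\} = [5]$. I want to apply Remark \ref{asdf}, which requires choosing a ``pivot'' structure: I need to pick which pair plays the role of $\alpha\beta$ whose image gets differentiated by $x_p\partial_\gamma$, and I need to supply the auxiliary data $H, L$ and the index $q$. The natural choice is to take $H$ and $L$ to be the two remaining pairs of the triple $\{ab,bc,ca\}$, using the two leftover indices $p,q$ to build the differential operators. Because each of $a,b,c$ appears in exactly two pairs, differentiating in the $C(a,b,c)$-cycle will produce terms that each reconstruct $\theta_{ab,bc,ca}$ up to sign — this is the crux of the computation.

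$\,$

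Concretely, I would proceed as follows. As in Lemma \ref{a,b,c,d}, I want to arrange the fifth index $p$ so that the signs $(-1)^{\chi_{p>\gamma}}$ appearing in Remark \ref{asdf} combine to give a nonzero multiple of $\theta_{ab,bc,ca}(s)$, forcing it to vanish. The plan is: set up the equation from Remark \ref{asdf} with an appropriate assignment of $a,b,c$ (the cycle), $p,q$ (the remaining two indices), and $H,L$ (two of the three pairs), check that the hypothesis ``$x_p\partial_c.\theta_{ab,H,L}$ has the leading weight'' is compatible with $\theta_{ab,bc,ca}$ having the leading weight, and then read off the resulting relation. The sum over $(\alpha,\beta,\gamma)\in C(a,b,c)$ will produce three terms; because of the cyclic symmetry of $ab,bc,ca$, each term should be proportional to $\theta_{ab,bc,ca}(s)$ (possibly together with a $\theta^p_H$ term from the $\chi_{L\in B_1Q}$ contribution, which I must argue either vanishes or contributes with a determinate sign).

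$\,$

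\textbf{The hard part} will be bookkeeping the signs and verifying that the three cyclic contributions do not cancel. Specifically, the difficulty is that, unlike the ``star'' configuration $\theta_{ab,ac,ad}$ where the shared index $a$ made the two surviving terms manifestly equal, here the cyclic structure means I must track how $x_p\partial_\gamma$ acting on each of $\theta_{ab}, \theta_{bc}, \theta_{ca}$ rewrites the pair and whether the resulting reordered symbol equals $\pm\theta_{ab,bc,ca}$ via the $B_d$-antisymmetry $\theta_{g(I)}^T = (-1)^{\ell(g)}\theta_I^T$ from Remark \ref{thetaction}. I expect I will need to choose $p$ (the fifth, non-cyclic index playing the differentiation role) to lie either above or below all of $a,b,c$, exactly as in the previous lemma, so that the $\chi_{p>\gamma}$ signs are constant across the cycle and the three terms add rather than cancel; if no single choice of $p$ works for all orderings of $a,b,c$, I would split into the finitely many cases for the relative position of $p$ among $\{a,b,c\}$ and handle each, using duality (Corollary \ref{degree3dual}) to halve the casework where possible. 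Once the relation reduces to $(\text{nonzero constant})\cdot\theta_{ab,bc,ca}(s) = 0$, the conclusion that $\theta_{ab,bc,ca}$ cannot have the leading weight follows immediately.
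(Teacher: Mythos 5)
Your high-level strategy is the paper's strategy — apply Remark \ref{asdf} and choose the external index $p$ so that the identity collapses to $(\textrm{nonzero constant})\cdot\theta_{ab,bc,ca}(s)=0$ — but the concrete configuration you commit to fails structurally, not just in sign bookkeeping. You propose taking $H$ and $L$ to be two of the three pairs of the triple, with the third pair as the pivot $\alpha\beta$. However, by Remark \ref{thetaction} the terms $(x_p\partial_\gamma.\theta_{\alpha\beta,H,L})(s)$ in Equation \eqref{maxdeg3} act by substituting the letter $p$ by $\gamma$ inside the index $(\alpha\beta,H,L)$; since $p\notin\{a,b,c\}$, with your choice the letter $p$ occurs nowhere in the index and all these terms vanish identically. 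The term $2\varepsilon_{pqabc}\chi_{L\in B_1Q}\theta^p_{H}(s)$ vanishes as well, since $L$ is a pair of letters from $\{a,b,c\}$ while $Q=(p,q)$ — this settles your worry about that term, but unhelpfully. In fact the hypothesis of Remark \ref{asdf} cannot even hold for your data, because $x_p\partial_c.\theta_{ab,H,L}=0$ cannot have the leading weight; and the raw Equation \eqref{maxdeg3} collapses to a single condition of the form $\chi_{p>\gamma}\,x_p\partial_\gamma.(\theta_{ab,bc,ca}(s))=0$ for one letter $\gamma$ (the other two cyclic terms carry a repeated pair and die), which is not the cancellation pattern ``each term proportional to $\theta_{ab,bc,ca}(s)$'' that your plan relies on.

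The missing idea — already visible in the proof of Lemma \ref{a,b,c,d}, where $H=(a,p)$ — is that $H$ must contain the external letter $p$, so that the substitution $p\mapsto\gamma$ performed along the cycle \emph{regenerates} the triple. The paper takes $H=(b,p)$ and $L=(c,a)$: then $(\alpha,\beta,\gamma)=(a,b,c)$ produces $\theta_{ab,bc,ca}$, $(\alpha,\beta,\gamma)=(b,c,a)$ produces $\theta_{bc,ba,ca}=\theta_{ab,bc,ca}$ (an even rearrangement under the $B_3$-antisymmetry), and $(\alpha,\beta,\gamma)=(c,a,b)$ vanishes because $\theta_{ca,bp,ca}$ has the repeated pair $(c,a)$. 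This yields
\[
\bigl((-1)^{\chi_{p>c}}+(-1)^{\chi_{p>a}}\bigr)\,\theta_{ab,bc,ca}(s)=0,
\]
so one only needs $p$ on one side of the \emph{two} letters $a$ and $c$ — not of all of $a,b,c$, which, as you half-suspected, can be impossible (for $\{a,b,c\}=\{1,3,5\}$ the only external letters are $2$ and $4$). Since the triple is invariant under cyclic relabeling of $(a,b,c)$, the distinguished pair $\{a,c\}$ may be taken to be any $2$-subset of the three letters, and by pigeonhole at least two of the three letters lie on the same side of any given external letter; hence a good choice of $p$ always exists, the coefficient above is $\pm 2\neq 0$, and $\theta_{ab,bc,ca}(s)=0$ follows at once, with no case analysis and no appeal to duality (Corollary \ref{degree3dual}) needed.
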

\begin{proof}
 Without loss of generality we can choose $p$ such that $p$ is either bigger than both $a$ and $c$ or smaller than both $a$ and $c$. Remark \ref{asdf} applies with $H=(b,p)$ and $L=(c,a)$ so we have
 \[
  (-1)^{\chi_{p>c}}\theta_{ab,bc,ca}(s)+(-1)^{\chi_{p>a}}\theta_{ab,bc,ca}(s)=0.
 \]

\end{proof}
\begin{lemma}\label{12e45}
If $x,y,z,w\in[5]$ are distinct and $\theta_{xy,zw,xw}$ has the leading weight of $\varphi$, then  $\theta_{xy,zw,xw}=\theta_{12,45,kl}$ for some $k,l\in\{1,2,4,5\}$.
\end{lemma}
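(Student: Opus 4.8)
The plan is to exploit the weight constraints coming from the preceding propositions, exactly as in the proofs of Lemmas \ref{a,b,c,d} and \ref{a,b,c}, but now applied to the element $\theta_{xy,zw,xw}$ whose underlying index pattern has three of the five indices appearing (namely $x$ twice, $y$, $z$, $w$ once each — so the five letters $x,y,z,w$ are distinct and one letter of $[5]$ is missing). First I would record that if $\theta_{hk,lm,\ldots}$ has the leading weight of $\varphi$, then the analogue of Proposition \ref{proppesi1} forces $\mu_{ij}\geq 0$ for every simple-type coordinate, so any index configuration producing a negative fundamental coordinate of $\mu$ is excluded. The strategy is therefore to run through the possible positions of the repeated index $x$ and of the ``shared'' index $w$ (the one appearing in both $zw$ and $xw$) and to eliminate all configurations except those in which $\{x,y\}$ and $\{z,w\}$ are $\{1,2\}$ and $\{4,5\}$ in some order.

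The key steps, in order, are as follows. First I would invoke Remark \ref{asdf} with a well-chosen fifth element $p$ (the unique index of $[5]$ not among $x,y,z,w$), together with an appropriate splitting of the remaining three pairs into the triple $\{\alpha\beta\}$ governed by $C(a,b,c)$ and the pair $(H,L)$, so as to derive a vanishing relation of the form $((-1)^{\chi_{p>\gamma}}+(-1)^{\chi_{p>\gamma'}})\theta_{xy,zw,xw}(s)=0$; this kills $\theta_{xy,zw,xw}$ whenever $p$ can be chosen larger than, or smaller than, both of two relevant indices simultaneously. This is precisely the mechanism used in Lemmas \ref{a,b,c,d} and \ref{a,b,c}, and it reduces the admissible cases to a short list in which the ``extreme'' index $p$ is pinned between the relevant indices. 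Second, for each surviving configuration I would apply Proposition \ref{proppesi1} (and, where the repeated index forces it, Propositions \ref{proppesi2} and \ref{proppesi3}) to compute the offending coordinates $\mu_{ij}$; a negative value contradicts dominance (Corollary \ref{dominancetheta}) and the fact that $\mu$ is a dominant weight, eliminating that configuration. Finally, a duality reduction via Corollary \ref{degree3dual} — replacing $\nu$ by $-\nu^*$ — halves the casework, since the index pattern of $\theta_{xy,zw,xw}$ maps to another pattern of the same shape under $i\mapsto 6-i$.

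The main obstacle I anticipate is bookkeeping rather than conceptual: the element $\theta_{xy,zw,xw}$ is only defined up to the $B_3$-action on $\mathcal I_3$, so before computing anything one must fix a canonical representative and keep careful track of the signs $\varepsilon_{pqabc}$ and $(-1)^{\chi_{p>\gamma}}$ that arise each time a pair is moved into the $C(a,b,c)$ triple or the $(H,L)$ slot. The genuinely delicate point is choosing, in each case, the partition of the three pairs of $\theta_{xy,zw,xw}$ so that $x_p\partial_\gamma.\theta_{\alpha\beta,H,L}$ is the term carrying the leading weight (so that Remark \ref{asdf} indeed applies and the quadratic $x_p\partial_\gamma.(\theta(s))$ terms drop out); this requires verifying, via Corollary \ref{dominancetheta}, that no competing term can share the leading weight. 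Once the correct partitions are identified, each individual case collapses to a one-line sign computation, and assembling the surviving cases yields exactly $\theta_{xy,zw,xw}=\theta_{12,45,kl}$ with $k,l\in\{1,2,4,5\}$.
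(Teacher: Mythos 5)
Your first step coincides with the paper's: Remark \ref{asdf}, used with $a=x$, $b=y$, $c=w$, $q=z$, $H=(x,p)$, $L=(z,w)$ (where $p$ is the missing fifth index), yields $\bigl((-1)^{\chi_{p>y}}+(-1)^{\chi_{p>w}}\bigr)\theta_{xy,xw,zw}(s)=0$, and combined with the relabelling that exchanges the roles of $x,z$ and $y,w$ this kills every configuration except those with $\{x,y,z,w\}=\{1,2,4,5\}$ in which both $\{y,w\}$ and $\{x,z\}$ straddle $3$. Note, however, that the survivors are not only the target patterns $\theta_{12,45,kl}$: the residual patterns $\theta_{14,25,12}$ and $\theta_{15,24,12}$ (and, dually, those with third pair $\{4,5\}$) also pass the sign test and must be excluded by a second argument. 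Incidentally, you misread the index pattern: in $\theta_{xy,zw,xw}$ both $x$ \emph{and} $w$ are repeated (four of the five indices occur, two of them twice), though your later remark about the ``shared'' index $w$ shows you partly noticed this.

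The genuine gap is in your second step, precisely where these residual cases must die. First, Propositions \ref{proppesi1}, \ref{proppesi2} and \ref{proppesi3} are proved only for morphisms of degree $2$ — they rest on Corollary \ref{equationdegree2highest} — so they cannot simply be ``applied'' to a degree-$3$ morphism; any analogue has to be re-derived from Equation \eqref{maxdeg3}, which is what the paper does. Second, and more seriously, your elimination criterion (``a negative coordinate $\mu_{ij}$ contradicts dominance of $\mu$'') never materializes in the residual cases: for $\theta_{1y,2z,12}$ with $\{y,z\}=\{4,5\}$, Equation \eqref{maxdeg3} with $a=1$, $b=2$, $q=3$, $c=y$, $p=z$, $H=(2,p)$, $L=(1,2)$ gives $x_z\partial_2.(\theta_{1y,2z,12}(s))=0$; applying $x_2\partial_z$ yields $h_{2z}.(\theta_{1y,2z,12}(s))=0$, i.e.\ $\mu_{2z}=0$, hence $\mu_{34}=0$ — which is \emph{not} negative. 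The contradiction lives on the $\lambda$ side: since $\lambda_{34}(\theta_{1y,2z,12})=1$, one gets $\lambda_{34}=\mu_{34}-1=-1<0$, violating dominance of $\lambda$, not of $\mu$. As stated, your plan would leave $\theta_{14,25,12}$ and $\theta_{15,24,12}$ alive, and these are exactly the configurations the lemma must rule out; you need the transfer from $\mu$ to $\lambda$ through the known weight of $\theta$. (Your appeal to Corollary \ref{degree3dual} to halve the casework is sound, and in fact it is what disposes of the dual patterns with third pair $\{4,5\}$, a point the paper's own text passes over silently; your citation of Corollary \ref{dominancetheta} as the source of the dominance contradiction is a misattribution — that corollary only governs comparability of the weights $\lambda(\theta^T_I)$, which is what justifies dropping competing terms.)
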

\begin{proof}
Let us first assume that $\{x,y,z,w\}\neq \{1,2,4,5\}$.
This assumption ensures that we can assume that the fifth element $p$ is either bigger or smaller than both $y$ and $w$ (otherwise exchange the roles of $x,z$ and $y,w$).
 Use Remark \ref{asdf} with $a=x$, $b=y$, $c=w$, $q=z$, $H=(x,p)$, $L=(z,w)$. Then we have:
  \[
  (-1)^{\chi_{p>y}}\theta_{xy,xw,zw}(s)+(-1)^{\chi_{p>w}}\theta_{xy,xw,zw}(s)=0.
 \]
Now let $\{x,y,z,w\}= \{1,2,4,5\}$. If either
$\{y,w\}=\{1,2\}$ or $\{y,w\}=\{4,5\}$ then we can use the same argument as above.

Now let $\{y,z\}=\{4,5\}$ so that $\theta_{1y,2z,12}$ has the leading weight of $\varphi$. Equation \eqref{maxdeg3} with $a=1$, $b=2$, $q=3$, $c=y$ $p=z$, $H=(2,p)$ and $L=(1,2)$ gives
$$x_z\partial_2.(\theta_{1y,2z,12}(s))=0$$
hence if we apply $x_2\partial_z$ we get $h_{2z}.(\theta_{1y,2z,12}(s))=0$ which implies in particular that
$\mu_{34}=0$. Since $\lambda_{34}(\theta_{1y,2z,12})=1$ this contradicts the dominance of $\lambda$.
The thesis follows.
\end{proof}
\begin{lemma} \label{1245}The elements  $\theta_{12,45,14}$, $\theta_{12,45,25}$ and $\theta_{12,45,24}$ do not have the leading weight of $\varphi$.
\end{lemma}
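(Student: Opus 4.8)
The plan is to assume for contradiction that one of the three elements has the leading weight of a morphism $\varphi:M(\lambda)\to M(\mu)$, with $s\in F(\lambda)$ a highest weight vector and $T(s)$ the corresponding highest weight vector of $F(\mu)$, and then to collapse a single specialization of Equation \eqref{maxdeg3} to one term.

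I would first record the three leading weights: $\lambda(\theta_{12,45,14})=(-1,-1,2,-1)$, $\lambda(\theta_{12,45,24})=(1,-2,2,-1)$ and $\lambda(\theta_{12,45,25})=(1,-2,1,1)$. Since $-\lambda(\theta_{12,45,14})^*=\lambda(\theta_{12,45,25})$ while $-\lambda(\theta_{12,45,24})^*=\lambda(\theta_{12,45,24})$, the duality of Corollary \ref{degree3dual} exchanges $\theta_{12,45,14}$ and $\theta_{12,45,25}$ and fixes $\theta_{12,45,24}$. As Lemma \ref{12e45} forces a leading term of this shape to be some $\theta_{12,45,kl}$ and the four corresponding weights are pairwise distinct, it is enough to exclude $\theta_{12,45,24}$ and $\theta_{12,45,25}$ directly; the exclusion of $\theta_{12,45,14}$ then follows by applying Corollary \ref{degree3dual} to the dual morphism.

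For $T=\theta_{12,45,24}$ (resp.\ $T=\theta_{12,45,25}$) I would invoke Equation \eqref{maxdeg3} with $p=3$, $q=2$, $\{a,b,c\}=\{1,4,5\}$, $H=(1,2)$ and $L=(2,4)$ (resp.\ $L=(2,5)$), so that the cyclic term $\alpha\beta=(4,5)$, $\gamma=1$ produces $\theta_{\alpha\beta,H,L}=-T$. Choosing $p=3$, the unique index not occurring in $T$, makes every substitution term $(x_3\partial_\gamma.\theta_{\alpha\beta,H,L})$ vanish, because $3$ appears in none of the pairs $(4,5),(1,2),(2,4)$; moreover the other two cyclic terms have $\gamma\in\{4,5\}$, so $\chi_{3>\gamma}=0$ kills their vector contributions, and the term $\theta^3_H$ drops since $L\notin B_1Q$. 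Hence Equation \eqref{maxdeg3} reduces to the single relation $x_3\partial_1.(T(s))=0$. Applying the raising operator $x_1\partial_3$ and using that $T(s)$ is highest weight yields $h_{13}.(T(s))=0$, i.e.\ $\mu_{13}=0$; dominance of $\mu$ then forces $\mu_{12}=0$, so $\lambda_{12}=\mu_{12}-\lambda_{12}(T)=-1<0$ (here $\lambda_{12}(\theta_{12,45,24})=\lambda_{12}(\theta_{12,45,25})=1$), contradicting the dominance of $\lambda$.

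The step I expect to be the genuine obstacle is $\theta_{12,45,14}$ itself. Run head on, the same $p=3$ specialization only delivers $\mu_{13}=0$ or $\mu_{23}=0$, and since $\lambda_{12}(\theta_{12,45,14})=\lambda_{23}(\theta_{12,45,14})=-1$ both are perfectly compatible with $\lambda$ dominant, so no contradiction appears. To contradict dominance one would need to bound $\mu_{34}$ (because $\lambda_{34}(\theta_{12,45,14})=2$), but the operators available from the absent index $3$ only reach $\mu_{13}$ and $\mu_{23}$, and forcing the index $4$ into play reintroduces companion terms whose weight lies strictly below the leading weight and which Corollary \ref{dominancetheta} therefore does not annihilate. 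The clean way around this is exactly the duality reduction of the second step: instead of attacking $\theta_{12,45,14}$ directly, one reads its exclusion off from the already-established exclusion of its dual partner $\theta_{12,45,25}$ via Corollary \ref{degree3dual}.
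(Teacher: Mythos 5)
Your proof is correct, but it organizes the three cases differently from the paper, so the two arguments are worth comparing. The paper attacks $\theta_{12,45,14}$ and $\theta_{12,45,24}$ head on, both times starting from Equation \eqref{maxdeg3} with $a=1$, $b=2$, $c=4$, $q=3$, $p=5$, $H=(4,5)$, $L=(1,2)$ (its Equation \eqref{czwp}), applying the raising operators $x_2\partial_5$ resp.\ $x_1\partial_5$, invoking Lemma \ref{12e45} to kill the companion terms $\theta_{24,41,15}(s)$ resp.\ $\theta_{24,41,52}(s)$, and extracting $\mu_{25}=1$ resp.\ $\mu_{15}=0$, which contradicts dominance of $\lambda$ through the coordinate $\lambda_{34}$ (since $\lambda_{34}(\theta)=2$); it then disposes of $\theta_{12,45,25}$ by the duality of Corollary \ref{degree3dual}. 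You dualize in the opposite direction, and your direct computation is legitimate and in fact cleaner: with $p=3$, $q=2$, $H=(1,2)$, $L=(2,4)$ or $(2,5)$, all substitution terms $(x_3\partial_\gamma.\theta_{\alpha\beta,H,L})(s)$ vanish because the index $3$ occurs nowhere in $T$, the terms with $\gamma\in\{4,5\}$ die since $\chi_{3>\gamma}=0$, and $\chi_{L\in B_1Q}=0$, so Equation \eqref{maxdeg3} collapses to the single relation $x_3\partial_1.(T(s))=0$; raising by $x_1\partial_3$ gives $\mu_{13}=0$, hence $\mu_{12}=0$ and the contradiction $\lambda_{12}=\mu_{12}-1=-1$ (your weight computations $\lambda(\theta_{12,45,14})=(-1,-1,2,-1)$, $\lambda(\theta_{12,45,24})=(1,-2,2,-1)$, $\lambda(\theta_{12,45,25})=(1,-2,1,1)$ and the duality pairing $14\leftrightarrow 25$, $24\leftrightarrow 24$ all check out). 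What each approach buys: yours avoids any appeal to Lemma \ref{12e45} inside the calculation and places the dominance contradiction directly at a fundamental coordinate, and your diagnosis of why $\theta_{12,45,14}$ resists the $p=3$ attack ($p=3$ only controls $\mu_{13}$, whereas a contradiction must come from bounding $\mu_{34}$, since $\lambda_{34}(\theta_{12,45,14})=2$) is exactly the structural reason the paper uses $p=5$ there instead; the paper's choice keeps both direct cases inside one equation, \eqref{czwp}. One remark on parity of rigor: in both your argument and the paper's, the duality transfer implicitly assumes the leading term of the dual morphism must be carried by an already-excluded element, while strictly there are other elements of the same weight (in your direction $\theta_{15,25,24}$, which Lemma \ref{12e45} excludes, and $\theta^3_{25}$; in the paper's direction $\theta_{14,15,24}$ and $\theta^3_{14}$), and the $\theta^t_I$-type candidates are not covered by the cited lemmas — but since the paper's own one-sentence duality deduction is no more explicit on this point, your proof meets the paper's standard of rigor.
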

\begin{proof}
 Use Equation \eqref{maxdeg3} with $a=1$ $b=2$ $c=4$, $q=3$ and $p=5$, $H=(4,5)$ and $L=(1,2)$. We obtain
\begin{equation}\label{czwp}
\theta_{24,41,12}(s)+\theta_{41,42,12}(s)+2x_5\partial_1. (\theta_{24,45,12}(s))+2x_5\partial_2. (\theta_{41,45,12}(s))=0.
\end{equation}
Assume $\theta_{12,45,14}$ has the leading weight of $\varphi$. Then $\theta_{24,45,12}(s)=0$ and we apply $x_2\partial_5$ to Equation \eqref{czwp} to obtain
\[
-\theta_{54,41,12}(s)-\theta_{24,41,15}(s)-\theta_{41,45,12}(s)-\theta_{41,42,15}(s)+2h_{25}.(\theta_{41,45,12}(s))=0.
\]
But by Lemma \ref{12e45} we have $\theta_{24,41,15}(s)=0$ and so we have
\[
-2\theta_{41,45,12}(s)+2h_{25}.(\theta_{41,45,12}(s))=0.
\]
It follows that $\lambda_{25}(\theta_{41,45,12}(s))=1$ and so $\lambda_{34}(\theta_{41,45,12}(s))\leq 1$ and, since $\lambda_{34}(\theta_{41,45,12})=2$ this would imply $\lambda_{34}(s)\leq -1$, a contradiction.

By Corollary \ref{degree3dual} the element $\theta_{12,45,25}$ does not have the leading weight of $\varphi$ since $\lambda(\theta_{12,45,25})=-\lambda(\theta_{12,45,14})^*$.

Now we assume that $\theta_{12,45,24}$ has the leading weight of  $\varphi$. We apply $x_1\partial_5$ to Equation \eqref{czwp} to obtain
\[
-\theta_{24,45,12}(s)-\theta_{24,41,52}(s)-\theta_{45,42,12}-\theta_{41,42,52}+2h_{15}.(\theta_{24,45,12}(s))+2x_1\partial_2.(\theta_{41,45,12}(s))=0.
\]
Lemma \ref{12e45} ensures $\theta_{24,41,52}(s)=0$ and so we obtain
\[
-2\theta_{24,45,12}(s)+2h_{15}.(\theta_{24,45,12}(s)-2\theta_{42,45,12}(s)=0
\]
and we conclude
\[
h_{15}.(\theta_{24,45,12}(s))=0.
\]
We obtain a contradiction with  the same argument used in the other case.

\end{proof}
\begin{lemma}\label{morfismogrado3}
Assume that $\theta_{12,15,45}$ has the leading weight of $\varphi$. Then $\lambda=(1,1,0,0)$, $\mu=(0,0,1,1)$ and $\varphi=\nabla_C\nabla_B\nabla_A$ (up to a scalar).
\end{lemma}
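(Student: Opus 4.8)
The plan is to determine $\nu=\mu-\lambda$, then pin down $\mu$ (hence $\lambda$) one coordinate at a time, and finally identify $\varphi$ through its leading term, following the pattern of the degree one and two classifications.

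First I would record the leading weight. Since $\theta_{12,15,45}$ has the leading weight, $\nu=\lambda(\theta_{12,15,45})=-\lambda(d_{12}d_{15}d_{45})$, and applying $\lambda_{ij}(d_I)=m_i(I)-m_j(I)$ to the index multiset $\{1,2,1,5,4,5\}$ gives $\lambda(d_{12}d_{15}d_{45})=(1,1,-1,-1)$, so $\nu=(-1,-1,1,1)$. Moreover $\theta_{12,15,45}(s)$ is a highest weight vector of $F(\mu)$, so each coordinate $\mu_{ij}$ equals the eigenvalue of $h_{ij}$ on $\theta_{12,15,45}(s)$; the goal becomes to prove $\mu=(0,0,1,1)$, whence $\lambda=\mu-\nu=(1,1,0,0)$.

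To compute a given $\mu_{ij}$ I would specialize Equation \eqref{maxdeg3} to a permutation $(p,q,a,b,c)$ of $[5]$ and auxiliary pairs $H,L$ chosen so that $\theta_{12,15,45}$ occurs as one of the $\theta_{\alpha\beta,H,L}$, then apply the raising operator $x_i\partial_j$ and read off the $h_{ij}$–eigenvalue, discarding every $\theta_J(s)$ for which $\lambda(\theta_J)\not\preceq\nu$ by Corollary \ref{dominancetheta}. For $\mu_{45}$, for instance, take $p=5$, $q=3$, $a=1$, $b=2$, $c=4$, $H=(1,5)$, $L=(4,5)$: the term $\chi_{L\in B_1Q}\theta^p_H(s)$ drops since $(4,5)\notin B_1(5,3)$, and the summand indexed by $(\alpha,\beta,\gamma)=(1,2,4)$ contributes, among others, the term $x_5\partial_4.(\theta_{12,15,45}(s))$; applying $x_4\partial_5$ and using $[x_4\partial_5,x_5\partial_4]=h_{45}$ together with $x_4\partial_5.(\theta_{12,15,45}(s))=0$ produces $h_{45}.(\theta_{12,15,45}(s))$, while the summands coming from $(2,4,1)$ and $(4,1,2)$ either vanish by dominance or fall back into $F(\mu)_\mu$ and reassemble into a multiple of $\theta_{12,15,45}(s)$, exactly as in Propositions \ref{proppesi2} and \ref{proppesi3}. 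Analogous specializations yield the other three coordinates, the auxiliary maps $\theta^t_H$ being controlled in the same way. I expect these to give $\mu_{12}=\mu_{23}=0$ and $\mu_{34}=\mu_{45}=1$.

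Having established $\lambda=(1,1,0,0)$ and $\mu=(0,0,1,1)$, it remains to identify $\varphi$ with $\nabla_C\nabla_B\nabla_A$. Both $\varphi(1\otimes s)$ and the singular vector defining $\nabla_C\nabla_B\nabla_A$ are degree three singular vectors of $M(0,0,1,1)$ with leading weight $\nu$; I would compare their leading terms, i.e. their projections to $(U_-)_3\otimes F(\mu)_\mu$. The $\omega_{12,15,45}$–component of the leading term of $\varphi(1\otimes s)$ is $\theta_{12,15,45}(s)=c\,x_{45}^*x_5^*$ with $c\ne0$, and the remaining weight-$\nu$ components are fixed by the relations of Theorem \ref{grado3}; matching them against the leading term $d_{12}d_{15}d_{45}\otimes x_{45}^*x_5^*$ of $\nabla_C\nabla_B\nabla_A(1\otimes s)$ and invoking Proposition \ref{leading}(ii) gives $\varphi=c\,\nabla_C\nabla_B\nabla_A$. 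The main obstacle is the bookkeeping in the previous paragraph: each specialization of Equation \eqref{maxdeg3} spawns many terms $\theta_J(s)$ and $\theta^t_H(s)$, and the crux is to decide systematically, through the dominance criteria of Corollary \ref{dominancetheta}, which of them vanish, while correctly tracking the residual terms that return to the top weight space and so genuinely contribute to the eigenvalues.
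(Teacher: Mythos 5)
Your skeleton coincides with the paper's proof of this lemma -- indeed your computation of $\mu_{45}$ uses the \emph{identical} specialization of Equation \eqref{maxdeg3} ($p=5$, $q=3$, $a=1$, $b=2$, $c=4$, $H=(1,5)$, $L=(4,5)$) followed by $x_4\partial_5$, and the paper then uses two more specializations to get $\mu_{34}=1$ and $\mu_{13}=0$ (dominance of $\mu$ then forces $\mu_{12}=\mu_{23}=0$, rather than computing those coordinates separately). However, there is a genuine gap in your discarding mechanism. Corollary \ref{dominancetheta} only eliminates terms $\theta_J(s)$ with $\lambda(\theta_J)\not\leq\nu$; it does handle, e.g., $\theta_{24,14,45}(s)$ and $\theta_{41,15,45}(s)$ in the first specialization, but after applying $x_4\partial_5$ the equation contains $\theta_{25,15,41}(s)$, whose weight is \emph{exactly} $\nu=(-1,-1,1,1)$. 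Your fallback -- that such terms ``fall back into $F(\mu)_\mu$ and reassemble into a multiple of $\theta_{12,15,45}(s)$'' -- is true but not sufficient: since $F(\mu)_\mu$ is one-dimensional one only gets $\theta_{25,15,41}(s)=c'\,\theta_{12,15,45}(s)$ with $c'$ unknown, and the eigenvalue computation then yields only $\mu_{45}=1+c'/2$, which is inconclusive. What closes this hole in the paper is not dominance but the vanishing lemmas proved immediately before this one: $\theta_{25,15,41}$ is of the shape $\theta_{xy,zw,xw}$ with $\{x,y,z,w\}=\{1,2,4,5\}$ but is not of the form $\theta_{12,45,kl}$, so Lemma \ref{12e45} gives $\theta_{25,15,41}(s)=0$; the same lemma is invoked a second time in the $\mu_{34}$ computation. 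Your proposal never brings in Lemmas \ref{a,b,c,d}--\ref{1245}, and without them the ``crux'' you correctly flag cannot be resolved by the tools you list.

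The same issue resurfaces, more mildly, in your final identification step. The weight space of $(U_-)_3$ of weight $-\nu$ is three-dimensional, spanned by $\omega_{12,15,45}$, $\omega_{14,15,25}$ and $\partial_3\omega_{15}$, so the leading term of $\varphi(1\otimes s)$ is a priori $\omega_{12,15,45}\otimes\theta_{12,15,45}(s)+\omega_{14,15,25}\otimes\theta_{14,15,25}(s)+\partial_3\omega_{15}\otimes\theta^3_{15}(s)$. To match it with $d_{12}d_{15}d_{45}\otimes\theta_{12,15,45}(s)$ you must know both that $\theta_{14,15,25}(s)=0$ (again Lemma \ref{12e45}: it is of the forbidden shape with $\{x,y,z,w\}=\{1,2,4,5\}$) and that $\theta^3_{15}(s)=-\frac{1}{2}\theta_{12,15,45}(s)$, which the paper extracts from Remark \ref{asdf}; since $\omega_{12,15,45}=d_{12}d_{15}d_{45}+\frac{1}{2}\partial_3 d_{15}$, this value is exactly what makes the $\partial_3 d_{15}$ contributions cancel. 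Your appeal to ``the relations of Theorem \ref{grado3}'' points to the right source (Remark \ref{asdf} is a specialization of Equation \eqref{deg3n1}), but as written it is a placeholder where a specific computation is required; once these two facts are supplied, your concluding appeal to Proposition \ref{leading} is exactly the paper's.
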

\begin{proof}
Use Equation \eqref{maxdeg3} with $a=1$, $b=2$, $c=4$, $q=3$, $p=5$, $H=(1,5)$ and $L=(4,5)$. We obtain
\[
\theta_{12,14,45}(s)+\theta_{24,15,41}(s)+\theta_{41,12,45}(s)+\theta_{41,15,42}(s)+2x_5\partial_4.(\theta_{12,15,45}(s))=0
\]
since
$\theta_{24,14,45}(s)=\theta_{41,15,45}(s)=0$.
Applying $x_4\partial_5$ we get
\[
-\theta_{12,15,45}(s)-\theta_{25,15,41}(s)-\theta_{51,12,45}(s)-\theta_{41,15,52}(s)+2h_{45}.(\theta_{12,15,45}(s))=0.
\]
By Lemma \ref{12e45} we have $\theta_{25,15,41}(s)=0$ and so we obtain
\[
-2\theta_{12,15,45}(s)+2h_{45}.(\theta_{12,15,45}(s))=0
\]
and so
\[
\lambda_{45}(\theta_{12,15,45}(s))=1.
\]

Now we consider Equation \eqref{maxdeg3} with $a=1$, $b=3$, $c=5$, $q=2$, $p=4$, $H=(1,2)$ and $L=(4,5)$. We obtain
\[
\theta_{35,12,15}(s)+\theta_{51,12,35}(s)+2x_4\partial_3.(\theta_{51,12,45}(s))=0.
\]
Applying $x_3\partial_4$ to this equation we have
\[
-\theta_{45,12,15}(s)-\theta_{51,12,45}(s)+2h_{34}.(\theta_{51,12,45}(s))=0
\]
and from this we get $\lambda_{34}(\theta_{12,15,45}(s))=1$.

Finally, we use again Equation \eqref{maxdeg3} with $a=1$, $b=4$, $c=5$, $q=2$, $p=3$, $H=(1,2)$, $L=(1,5)$ which gives $2x_3\partial_1.(\theta_{45,12,15}(s))=0$, hence
\[
\lambda_{13}(\theta_{12,15,45}(s))=0
\]
proving that $\mu=(0,0,1,1)$. It follows that $\lambda=(1,1,0,0)$ since $\lambda(\theta_{12,15,45})=(-1,-1,1,1)$.

By Remark \ref{asdf} we have $-2\theta^3_{15}(s)-\theta_{12,15,45}(s)=0$ hence the leading term of the singular vector $\varphi(1\otimes s)$ is
$\omega_{12,15,45}\otimes \theta_{12,15,45}(s)+\partial_3d_{15}\otimes\theta^3_{15}(s)= d_{12}d_{15}d_{45}\otimes \theta_{12,15,45}(s)$.
It follows that $\varphi=\nabla_C\nabla_B\nabla_A$ due to Proposition \ref{leading}.
\end{proof} 
In the next result, for notational convenience, for all $a,b\in [5]$ we let $(-1)^{a<b}=(-1)^{\chi_{a<b}}$.
\begin{proposition}\label{12equazioni}
Let $\{x,y,z,w,t\}=[5]$ and let $s$ be a highest weight vector in $F(\lambda)$. Assume that $\theta_{xy,xz,wt}$ has the leading weight of $\varphi$. Then 
the following equations hold:
\begin{align}
\label{eq1} -2\varepsilon_{xyzwt}\theta^y_{xy}(s)+(-1)^{y<t}\theta_{xz,xt,yw}(s)&+(-1)^{y<t}\theta_{xz,xy,tw}(s)+ (-1)^{y<z}\theta_{xz,xt,yw}(s)\\
\nonumber & +(-1)^{y<z}\theta_{xy,xt,zw}(s)+(-1)^{y<x}\theta_{xy,xw,zt}(s)=0
\end{align}
\begin{align}
\label{eq2} 2\varepsilon_{xyzwt}\theta^y_{xy}(s)+(-1)^{y<t}\theta_{xw,xt,yz}(s)&+(-1)^{y<t}\theta_{xw,xy,tz}(s)+ (-1)^{y<w}\theta_{xw,xt,yz}(s)\\
\nonumber &+(-1)^{y<w}\theta_{xy,xt,wz}(s)+(-1)^{y<x}\theta_{xy,xz,wt}(s)=0
\end{align}
\begin{align}
\label{eq3} -2\varepsilon_{xyzwt}\theta^y_{xy}(s)+(-1)^{y<z}\theta_{xw,xz,yt}(s)&+(-1)^{y<z}\theta_{xw,xy,zt}(s)+ (-1)^{y<w}\theta_{xw,xz,yt}(s)\\
\nonumber &+(-1)^{y<w}\theta_{xy,xz,wt}(s)+(-1)^{y<x}\theta_{xy,xt,wz}(s)=0
\end{align}
\begin{align}
\label{eq4} 2\varepsilon_{xyzwt}\theta^z_{xz}(s)+(-1)^{z<t}\theta_{xy,xt,zw}(s)&+(-1)^{z<t}\theta_{xy,xz,tw}(s)+ (-1)^{z<y}\theta_{xy,xt,zw}(s)\\
\nonumber &+(-1)^{z<y}\theta_{xz,xt,yw}(s)+(-1)^{z<x}\theta_{xz,xw,yt}(s)=0
\end{align}
\begin{align}
\label{eq5} -2\varepsilon_{xyzwt}\theta^z_{xz}(s)+(-1)^{z<t}\theta_{xw,xt,zy}(s)&+(-1)^{z<t}\theta_{xw,xz,ty}(s)+ (-1)^{z<w}\theta_{xw,xt,zy}(s)\\
\nonumber &+(-1)^{z<w}\theta_{xz,xt,wy}(s)+(-1)^{z<x}\theta_{xz,xy,wt}(s)=0
\end{align}
\begin{align}
\label{eq6} 2\varepsilon_{xyzwt}\theta^z_{xz}(s)+(-1)^{z<y}\theta_{xw,xy,zt}(s)&+(-1)^{z<y}\theta_{xw,xz,yt}(s)+ (-1)^{z<w}\theta_{xw,xy,zt}(s)\\
\nonumber &+(-1)^{z<w}\theta_{xz,xy,wt}(s)+(-1)^{z<x}\theta_{xz,xt,wy}(s)=0
\end{align}
\begin{align}
\label{eq7} 2\varepsilon_{xyzwt}\theta^t_{xt}(s)+(-1)^{t<y}\theta_{xz,xy,tw}(s)&+(-1)^{t<y}\theta_{xz,xt,yw}(s)+ (-1)^{t<z}\theta_{xz,xy,tw}(s)\\
\nonumber &+(-1)^{t<z}\theta_{xt,xy,zw}(s)+(-1)^{t<x}\theta_{xt,xw,zy}(s)=0
\end{align}
\begin{align}
\label{eq8} -2\varepsilon_{xyzwt}\theta^t_{xt}(s)+(-1)^{t<w}\theta_{xz,xw,ty}(s)&+(-1)^{t<w}\theta_{xz,xt,wy}(s)+ (-1)^{t<z}\theta_{xz,xw,ty}(s)\\
\nonumber &+(-1)^{t<z}\theta_{xt,xw,zy}(s)+(-1)^{t<x}\theta_{xt,xy,zw}(s)=0
\end{align}
\begin{align}
\label{eq9} 2\varepsilon_{xyzwt}\theta^t_{xt}(s)+(-1)^{t<w}\theta_{xy,xw,tz}(s)&+(-1)^{t<w}\theta_{xy,xt,wz}(s)+ (-1)^{t<y}\theta_{xy,xw,tz}(s)\\
\nonumber &+(-1)^{t<y}\theta_{xt,xw,yz}(s)+(-1)^{t<x}\theta_{xt,xz,yw}(s)=0
\end{align}
\begin{align}
\label{eq10} -2\varepsilon_{xyzwt}\theta^w_{xw}(s)+(-1)^{w<t}\theta_{xy,xt,wz}(s)&+(-1)^{w<t}\theta_{xy,xw,tz}(s)+ (-1)^{w<y}\theta_{xy,xt,wz}(s)\\
\nonumber &+(-1)^{w<y}\theta_{xw,xt,yz}(s)+(-1)^{w<x}\theta_{xw,xz,yt}(s)=0
\end{align}
\begin{align}
\label{eq11} 2\varepsilon_{xyzwt}\theta^w_{xw}(s)+(-1)^{w<t}\theta_{xz,xt,wy}(s)&+(-1)^{w<t}\theta_{xz,xw,ty}(s)+ (-1)^{w<z}\theta_{xz,xt,wy}(s)\\
\nonumber &+(-1)^{w<z}\theta_{xw,xt,zy}(s)+(-1)^{w<x}\theta_{xw,xy,zt}(s)=0
\end{align}
\begin{align}
\label{eq12} -2\varepsilon_{xyzwt}\theta^w_{xw}(s)+(-1)^{w<y}\theta_{xz,xy,wt}(s)&+(-1)^{w<y}\theta_{xz,xw,yt}(s)+ (-1)^{w<z}\theta_{xz,xy,wt}(s)\\
\nonumber &+(-1)^{w<z}\theta_{xw,xy,zt}(s)+(-1)^{w<x}\theta_{xw,xt,zy}(s)=0
\end{align}
\end{proposition}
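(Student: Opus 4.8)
The idea is that each of the twelve identities is a specialisation of the single reduced relation provided by Remark~\ref{asdf},
\[
2\varepsilon_{pqabc}\chi_{L\in B_1Q}\theta^p_{H}(s)+\sum_{(\alpha,\beta,\gamma)\in C(a,b,c)}(-1)^{\chi_{p>\gamma}}(x_p\partial_{\gamma}.\theta_{\alpha\beta,H,L})(s)=0,
\]
which is what Equation~\eqref{maxdeg3} becomes once its lowering terms $x_p\partial_\gamma.(\theta_{\alpha\beta,H,L}(s))$ have been discarded. The plan is: first, to list the twelve choices of the data $(p,q,a,b,c)$, $H$, $L$ that produce Equations~\eqref{eq1}--\eqref{eq12}; second, to check that the hypothesis of Remark~\ref{asdf} is met in each case; and third, to expand every term $(x_p\partial_\gamma.\theta_{\alpha\beta,H,L})(s)$ by the explicit $L_0$-action of Remark~\ref{thetaction} and to collect the results.

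The choices are dictated by the superscript terms. To obtain the coefficient of $\theta^p_{xp}$, with $p\in\{y,z,w,t\}$, one must take $H=(x,p)$ and $L=Q=(p,q)$, so that $\chi_{L\in B_1Q}=1$; here $q$ ranges over $\{y,z,w,t\}\setminus\{p\}$ (the value $q=x$ is excluded, since then $L=\overline H$ and $\theta_{\alpha\beta,H,L}=0$), and $\{a,b,c\}=[5]\setminus\{p,q\}$. This produces exactly the $4\cdot 3=12$ ordered pairs $(p,q)$: for example $p=y$ with $q\in\{z,w,t\}$ yields Equations~\eqref{eq1}, \eqref{eq2}, \eqref{eq3} (for \eqref{eq1} one has $H=(x,y)$, $L=(y,w)$ and $\{a,b,c\}=\{x,z,t\}$).

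The verification that Remark~\ref{asdf} applies is the conceptual core and amounts to a short weight count. By Remark~\ref{thetaction}, $x_p\partial_\gamma.\theta_{\alpha\beta,H,L}$ is obtained by replacing one occurrence of $p$, inside $H=(x,p)$ or $L=(p,q)$, by $\gamma$. Since $p$ has multiplicity two in $\theta_{\alpha\beta,H,L}$ and $\{\alpha,\beta\}=\{a,b,c\}\setminus\{\gamma\}$, counting multiplicities shows that the resulting index multiset always has profile $m_x=2$, $m_y=m_z=m_w=m_t=1$, independently of the cyclic representative $(\alpha,\beta,\gamma)$; this is precisely the profile of $\lambda(\theta_{xy,xz,wt})$. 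Hence $x_p\partial_c.\theta_{ab,H,L}$ has the leading weight, the hypothesis of Remark~\ref{asdf} holds, and the reduced relation above is valid on $s$.

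It then remains to expand, using $x_p\partial_\gamma.\theta_{\alpha\beta,xp,pq}=-\theta_{\alpha\beta,x\gamma,pq}-\theta_{\alpha\beta,xp,\gamma q}$ (the first summand vanishing when $\gamma=x$): the three cyclic terms contribute two $\theta_J(s)$ each, except the one with $\gamma=x$ which contributes one, giving the five $\theta_J(s)$ summands of each of \eqref{eq1}--\eqref{eq12}, while $\varepsilon_{pqabc}$ produces the coefficient $\pm 2\varepsilon_{xyzwt}$ of $\theta^p_{xp}(s)$. The only real difficulty is sign bookkeeping: one must keep track of the factors $(-1)^{\chi_{p>\gamma}}$, of the sign of $\varepsilon_{pqabc}$ relative to $\varepsilon_{xyzwt}$, and of the signs arising when each $\theta_{\alpha\beta,x\gamma,pq}$ is brought to normal form via the antisymmetry $\theta^T_{g(I)}=(-1)^{\ell(g)}\theta^T_I$. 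To keep this in hand I would note that the hypothesis is unchanged under exchanging the two letters $y,z$ attached to $x$, and under exchanging the two remaining letters $w,t$ (since $\theta_{xy,xz,wt}$ then only changes by a sign); these two exchanges carry \eqref{eq1}--\eqref{eq3} to \eqref{eq4}--\eqref{eq6} and \eqref{eq7}--\eqref{eq9} to \eqref{eq10}--\eqref{eq12}, reducing the twelve verifications to the two blocks \eqref{eq1}--\eqref{eq3} and \eqref{eq7}--\eqref{eq9}.
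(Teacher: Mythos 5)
Your proposal is correct and takes essentially the same route as the paper: the paper's proof likewise consists of applying Remark~\ref{asdf} twelve times with $H=(x,p)$ and $L=Q=(p,q)$, where $(p,q)$ runs over the ordered pairs from $\{y,z,w,t\}$ (e.g.\ Equation~\eqref{eq1} from $p=y$, $q=w$, matching your choice), the weight count justifying the hypothesis and the expansion via Remark~\ref{thetaction} being left implicit there. Your observation that the exchanges $y\leftrightarrow z$ and $w\leftrightarrow t$ reduce the check to the blocks \eqref{eq1}--\eqref{eq3} and \eqref{eq7}--\eqref{eq9} is a harmless extra not present in the paper.
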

\begin{proof} We use Remark \ref{asdf} twelve times  with $L=Q=(p,q)$ any ordered pair in $\{y,z,w,t\}$ and $H=(x,p)$ to obtain the stated equations. 
More precisely  we get Equation \eqref{eq1} with $p=y$, $q=w$;
Equation \eqref{eq2} with $p=y$, $q=z$;
Equation \eqref{eq3} with $p=y$, $q=t$;
Equation \eqref{eq4} with $p=z$, $q=w$;
Equation \eqref{eq5} with $p=z$, $q=y$;
Equation \eqref{eq6} with $p=z$, $q=t$; 
Equation \eqref{eq7} with $p=t$, $q=w$;
Equation \eqref{eq8} with $p=t$, $q=y$;
Equation \eqref{eq9} with $p=t$, $q=z$;
Equation \eqref{eq10} with $p=w$, $q=z$;
Equation \eqref{eq11} with $p=w$, $q=y$;
Equation \eqref{eq12} with $p=w$, $q=t$.
\end{proof}

Proposition \ref{12equazioni} provides 12 linear equations in the ten unknown
$\theta_{xy,xz,wt}(s)=f_{wt}$, $\theta_{xy,xw,zt}(s)=f_{zt}$, $\theta_{xy,xt,zw}(s)=f_{zw}$, $\theta_{xz,xw,yt}(s)=f_{yt}$, $\theta_{xz,xt,yw}(s)=f_{yw}$, $\theta_{xw,xt,yz}(s)=f_{yz}$, 
$\varepsilon_{xyzwt}\theta^y_{xy}(s)=b_y$, $\varepsilon_{xyzwt}\theta^{z}_{xz}(s)=b_z$, $\varepsilon_{xyzwt}\theta^w_{xw}(s)=b_w$, $\varepsilon_{xyzwt}\theta^t_{xt}(s)=b_t$.
We are now interested in the study of the weights $\lambda_{i,j}(\theta_{xy,xz,wt}(s))$. 
 \begin{proposition}\label{aaa} Let $\{p,q,a,b,c\}=[5]$ with $c<p$, let $s$ be a highest weight vector in $F(\lambda)$, $H,L\in\mathcal{I}_1$ 
and assume that $\theta_{ab,H,L}$ has the leading weight of $\varphi$. Then we have 
 \begin{align*}
  &2h_{cp}.(\theta_{ab,H,L}(s))=\\
	&-2\varepsilon_{pqabc}\chi_{L\in B_1Q}(x_c\partial_p.\theta^p_{H})(s)+(x_c\partial_p.(x_p\partial_c \theta_{ab,H,L}))(s)+(-1)^{\chi_{p<b}}(x_c\partial_p.(x_p\partial_{b}.\theta_{ca,H,L}))(s)\\
  &+(-1)^{\chi_{p<a}}(x_c\partial_p.(x_p\partial_{a}.\theta_{bc,H,L}))(s)-2\chi_{c<b<p}(x_c\partial_b.\theta_{ca,H,L})(s)-2\chi_{c<a<p}(x_c\partial_a.\theta_{bc,H,L})(s)
 \end{align*}
\end{proposition}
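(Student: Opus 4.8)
The plan is to derive the identity by applying the raising operator $x_c\partial_p$ to Equation \eqref{maxdeg3} and simplifying. Since $c<p$, the coefficient $(-1)^{\chi_{p>\gamma}}$ of the $\gamma=c$ summand equals $-1$, so writing the cyclic sum over $C(a,b,c)$ explicitly, Equation \eqref{maxdeg3} evaluated at $s$ reads
\[
2\varepsilon_{pqabc}\chi_{L\in B_1Q}\theta^p_H(s)-(x_p\partial_c.\theta_{ab,H,L})(s)+2x_p\partial_c.(\theta_{ab,H,L}(s))+\Sigma_a+\Sigma_b=0,
\]
where $\Sigma_a$ and $\Sigma_b$ are the $\gamma=a$ and $\gamma=b$ summands, carrying $\theta_{bc,H,L}$ and $\theta_{ca,H,L}$ respectively. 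I would then apply $x_c\partial_p$ to this vanishing expression and solve for the single term that produces the left-hand side.

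Two elementary facts drive the computation. First, since $s$ is a highest weight vector and $c<p$, we have $x_c\partial_p.s=0$, so that $x_c\partial_p.(\eta(s))=(x_c\partial_p.\eta)(s)$ for every $\eta\in\Hom(V,W)$; applying this to the three terms in which $x_p\partial_\gamma$ acts on the homomorphism yields the first four terms on the right-hand side (using $-(-1)^{\chi_{p>\gamma}}=(-1)^{\chi_{p<\gamma}}$ after moving them across the equality). Second, the decisive term $2x_p\partial_c.(\theta_{ab,H,L}(s))$ is handled through the $\mathfrak{sl}_2$-relation $[x_c\partial_p,x_p\partial_c]=h_{cp}$ together with the fact that $\theta_{ab,H,L}(s)$, having the leading weight of $\varphi$, is a highest weight vector and is therefore annihilated by $x_c\partial_p$; this produces exactly $2h_{cp}.(\theta_{ab,H,L}(s))$, i.e. the left-hand side.

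What remains are the two \emph{vector} summands $2\chi_{p>a}\,x_p\partial_a.(\theta_{bc,H,L}(s))$ and $2\chi_{p>b}\,x_p\partial_b.(\theta_{ca,H,L}(s))$ coming from $\Sigma_a,\Sigma_b$. On these I would use the commutators $[x_c\partial_p,x_p\partial_a]=x_c\partial_a$ and $[x_c\partial_p,x_p\partial_b]=x_c\partial_b$, which split each contribution into a \emph{remainder} $x_p\partial_a.(x_c\partial_p.(\theta_{bc,H,L}(s)))$ and a \emph{direct} term $x_c\partial_a.(\theta_{bc,H,L}(s))$, and symmetrically for $b$. The main obstacle, and the core of the argument, is to show that these two pieces collapse precisely to the last two terms of the statement, and this is where the dominance order enters. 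Writing $x_c\partial_p.(\theta_{bc,H,L}(s))=(x_c\partial_p.\theta_{bc,H,L})(s)$ and using Remark \ref{thetaction}, the element $x_c\partial_p.\theta_{bc,H,L}$ is a signed sum of homomorphisms $\theta_J$ obtained by replacing an occurrence of $c$ by $p$; since $p>a$ in this branch, each such $J$ differs from the leading index $(ab,H,L)$ only in having the larger entry $p$ where the leading multiset has $a$, so by Proposition \ref{dominance} and Corollary \ref{dominancetheta} its weight lies strictly above the leading weight $\nu=\lambda(\theta_{ab,H,L})$, whence $\theta_J(s)=0$ and the whole remainder vanishes.

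It then remains to treat the direct term, distinguishing $c<a$ and $c>a$. If $c<a$ then $x_c\partial_a.s=0$, so $x_c\partial_a.(\theta_{bc,H,L}(s))=(x_c\partial_a.\theta_{bc,H,L})(s)$, and the prefactor becomes $\chi_{p>a}\chi_{c<a}=\chi_{c<a<p}$, giving exactly $-2\chi_{c<a<p}(x_c\partial_a.\theta_{bc,H,L})(s)$ after the sign flip. If instead $c>a$, then comparing $(bc,H,L)$ with $(ab,H,L)$ shows $\lambda(\theta_{bc,H,L})>\nu$, so $\theta_{bc,H,L}(s)=0$ and the term drops out, consistently with $\chi_{c<a<p}=0$. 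The entirely analogous analysis of the $b$-summand (replacing $a$ by $b$ and $\theta_{bc,H,L}$ by $\theta_{ca,H,L}$) produces the remaining term $-2\chi_{c<b<p}(x_c\partial_b.\theta_{ca,H,L})(s)$, which completes the identity. The only delicate points to verify carefully are the sign of $\varepsilon_{(I^b)_b,(I^b)_l}$ bookkeeping inside Remark \ref{thetaction} and the multiset comparison justifying each vanishing, but all of these reduce to the single observation that raising a small index to the maximal available one pushes the weight above $\nu$.
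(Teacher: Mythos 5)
Your proof is correct and follows essentially the same route as the paper's: rewrite Equation \eqref{maxdeg3} at the highest weight vector $s$, apply $x_c\partial_p$, use $x_c\partial_p.s=0$ together with $[x_c\partial_p,x_p\partial_c]=h_{cp}$ and the fact that $\theta_{ab,H,L}(s)$ is a highest weight vector, and handle the remaining commutators $[x_c\partial_p,x_p\partial_a]=x_c\partial_a$, $[x_c\partial_p,x_p\partial_b]=x_c\partial_b$. The only difference is that you spell out the dominance-order vanishing arguments (each replacement index is $M_0$ with $a$, resp.\ $b$, raised to $p$ or $c$, forcing weight strictly above $\nu$) that the paper leaves implicit in its ``The result follows,'' and these verifications are accurate.
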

\begin{proof}
Equation \eqref{maxdeg3} is equivalent to the following
 \begin{align*}
 2\varepsilon_{pqabc}\chi_{L\in B_1Q}\theta^p_{H}(s)&-(x_p\partial_{c}.\theta_{ab,H,L})(s)+(-1)^{\chi_{p>b}}(x_p\partial_{b}.\theta_{ca,H,L})(s)+(-1)^{\chi_{p>a}}(x_p\partial_{a}.\theta_{bc,H,L})(s)\\
 &+ 2x_p\partial_c.(\theta_{ab,H,L}(s))+2\chi_{p>b}x_p\partial_b.(\theta_{ca,H,L}(s))+2\chi_{p>a}x_p\partial_a.(\theta_{bc,H,L}(s))=0.
 \end{align*}
 We apply $x_c\partial_p$ to this equation and we obtain
 \begin{align*}
 2&\varepsilon_{pqabc}\chi_{L\in B_1Q}x_c\partial_p.(\theta^p_{H}(s))-(x_c\partial_p.(x_p\partial_c.\theta_{ab,H,L}))(s)+(-1)^{\chi_{p>b}}(x_c\partial_p.(x_p\partial_{b}.\theta_{ca,H,L}))(s)\\
 &+(-1)^{\chi_{p>a}}(x_c\partial_p.(x_p\partial_{a}.\theta_{bc,H,L}))(s)+ 2h_{cp}.(\theta_{ab,H,L}(s))+2\chi_{c<b<p}(x_c\partial_b.\theta_{ca,H,L})(s)\\
&+2\chi_{c<a<p}(x_c\partial_a.\theta_{bc,H,L})(s)=0.
 \end{align*}
The result follows.
\end{proof}
 
\begin{corollary} Let $\{x,y,z,w,t\}=[5]$ and assume that $\theta_{xy,xz,wt}$ has the leading weight of $\varphi$. Then we have

\noindent
 if $z<w$,
 \begin{align}
\label{9141}  2h_{zw}.f_{wt}=2(b_w-b_z)+f_{zt}+f_{wt}+(-1)^{\chi_{w<x}}(f_{yw}+f_{yz})-2\chi_{z<x<w}f_{wt};
 \end{align}
 if $y<z$,
 \begin{align}
\label{9142} 2h_{yz}.f_{wt}&=2(b_y-b_z)+(-1)^{\chi_{t<z}}(-f_{zw}-f_{yw})+(-1)^{\chi_{w<z}}(f_{zt}+f_{yt})\\ &
 \nonumber -2\chi_{y<t<z}(f_{wt}+f_{yw})-2\chi_{y<w<z}(f_{wt}-f_{yt})
 \end{align} 
 if $w<t$,
 \begin{align}
 \label{9143}2h_{wt}.f_{wt}&=(-1)^{\chi_{y<t}}(f_{yw}+f_{yt})+(-1)^{\chi_{x<t}}(f_{yt}+f_{yw})\\
\nonumber &-2\chi_{w<y<t}(f_{wt}-f_{yt})-2\chi_{w<x<t}(f_{wt}-f_{yw}).
 \end{align}
if $w<z$,
 \begin{align}
\label{9144} 2h_{wz}.f_{wt}&=f_{wt}+f_{zt}+(-1)^{\chi_{z<y}}(f_{wt}+f_{zt})+2\chi_{w<y<z}(-f_{wt}+f_{yt})+
2\chi_{w<x<z}(-f_{wt}+f_{yw})
 \end{align}
if $x<y$,
 \begin{align}
\label{9145} 2h_{xy}.f_{wt}&=((-1)^{\chi_{y<t}}+(-1)^{\chi_{y<w}})(-f_{yw}+f_{yt}+f_{yz})\\
\nonumber &-2\chi_{x<t<y}(f_{wt}+f_{yt}-f_{zt})
-2\chi_{x<w<y}(f_{wt}+f_{yw}-f_{yz})
 \end{align}
\end{corollary}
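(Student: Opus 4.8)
The plan is to derive each of the five displayed identities as a single specialization of Proposition \ref{aaa}, after which I expand the resulting operators by the $L_0$-action formulas of Remark \ref{thetaction} and eliminate the unwanted symbols by the dominance criterion of Corollary \ref{dominancetheta}. In every case I take $\theta_{ab,H,L}=f_{wt}=\theta_{xy,xz,wt}$ and let $\{c,p\}$ be the pair labelling the operator $h_{cp}$ on the left-hand side. Concretely, for \eqref{9141} I set $c=z,p=w$ (legitimate since $z<w$), $(a,b)=(x,y)$, $q=t$, $H=(x,z)$, $L=(w,t)$, so that $\theta_{ab,H,L}=\theta_{xy,xz,wt}$ verbatim; for \eqref{9144} I set $c=w,p=z$ and for \eqref{9143} $c=w,p=t$, keeping $(a,b)=(x,y)$; for \eqref{9142} ($c=y,p=z$) and \eqref{9145} ($c=x,p=y$) the distinguished pair becomes $(a,b)=(w,t)$, and I first rewrite $\theta_{wt,xy,xz}=\theta_{xy,xz,wt}$ using that the cyclic reordering of three pairs has even length, hence carries trivial sign by the antisymmetry $\theta^T_{g(I)}=(-1)^{\ell(g)}\theta^T_I$ of Remark \ref{thetaction}.

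With the substitution fixed I expand the three types of terms on the right of Proposition \ref{aaa}. The single upper-index term $(x_c\partial_p.\theta^p_H)(s)$ expands by Remark \ref{thetaction} into a difference of two weight-$(1,1)$ symbols, e.g. $x_z\partial_w.\theta^w_{xz}=\theta^z_{xz}-\theta^w_{xw}$ for \eqref{9141}; once the sign $\varepsilon_{pqabc}$ is related to $\varepsilon_{xyzwt}$ by a short five-index sign computation, this collapses to the contribution $\pm 2(b_\bullet-b_\bullet)$. This explains why a $b$-difference appears in \eqref{9141} and \eqref{9142}, and is absent from \eqref{9143}, \eqref{9144}, \eqref{9145}, where the factor $\chi_{L\in B_1Q}$ vanishes because $L\notin B_1Q$. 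The double-derivative terms $(x_c\partial_p.(x_p\partial_\bullet.\theta_{\bullet\bullet,H,L}))(s)$ are expanded by applying Remark \ref{thetaction} twice: the inner derivative produces an $h$-action together with single three-pair symbols, and the outer derivative converts these into the listed $f_{ij}$, while the prefactors $(-1)^{\chi_{p<b}},(-1)^{\chi_{p<a}}$ and $\chi_{c<b<p},\chi_{c<a<p}$ reproduce the $(-1)^{\chi}$ and $-2\chi$ coefficients in the statement.

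The decisive simplification is the dominance step. Since $f_{wt}=\theta_{xy,xz,wt}$ is assumed to have the leading weight of $\varphi$, any symbol $\theta_I$ occurring in the expansion whose weight $\lambda(\theta_I)$ is strictly greater than $\lambda(f_{wt})$ in the dominance order vanishes on the highest weight vector $s$, by Corollary \ref{dominancetheta}. Applying this criterion term by term discards every symbol outside the list $f_{wt},f_{zt},f_{zw},f_{yt},f_{yw},f_{yz}$ and $b_y,b_z,b_w,b_t$, leaving exactly the ten unknowns, and the displayed identities follow by collecting the survivors. I expect the main obstacle to be purely the sign and index bookkeeping: in each of the five specializations one must track the $\varepsilon_{pqabc}$-sign under the reordering bringing $\theta_{ab,H,L}$ to canonical form, the $B_3$-antisymmetry signs incurred when relabelling surviving symbols into $f$-notation (for instance $\theta_{xt,xz,yw}$ versus $\theta_{xz,xt,yw}=f_{yw}$), and the interplay of the indicators $\chi_{p<\bullet}$ with $\chi_{c<\bullet<p}$, which is what produces the apparent sign flips (e.g. $(-1)^{\chi_{y<t}}=-(-1)^{\chi_{t<y}}$) visible in \eqref{9143}. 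The conceptual content, namely Proposition \ref{aaa} together with the dominance vanishing, is immediate.
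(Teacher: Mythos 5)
Your proposal is correct and takes essentially the same route as the paper: its one-line proof consists precisely of the five specializations of Proposition \ref{aaa} you describe, and your choices of $c,p$, of the distinguished pair $(a,b)$, and of $H,L$ coincide with the paper's choices for \eqref{9141}--\eqref{9145}, including switching to $(a,b)=(w,t)$ (with the even, hence sign-free, cyclic reordering of the three pairs) in cases \eqref{9142} and \eqref{9145}. The expansion via Remark \ref{thetaction} and the vanishing of higher-weight symbols on $s$ via Corollary \ref{dominancetheta} are exactly the bookkeeping the paper leaves implicit.
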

\begin{proof}
The statement follows from Proposition \ref{aaa} with the following choices:
 \begin{enumerate}
 \item  $a=x$ $b=y$, $c=z$, $p=w$, $q=t$, $H=(x,z)$, $L=(w,t)$. 
 \item  $c=y$, $p=z$, $a=w$, $b=t$, $q=x$, $H=(x,y)$, $L=(z,x)$.
 \item  $c=w$, $p=t$, $a=x$, $b=y$, $q=z$, $H=(x,z)$, $L=(w,t)$.
 \item  $c=w$, $p=z$, $a=x$, $b=y$, $q=t$, $H=(x,z)$, $L=(w,t)$.
 \item  $c=x$, $p=y$, $a=w$ $b=t$, $q=z$, $H=(x,y)$, $L=(x,z)$.
 \end{enumerate}
\end{proof}
 \begin{proposition}\label{trequarti}
Let $s$ be a highest weight vector in $F(\lambda)$. For $c<p$ we have
\begin{align*}
4h_{cp}.(\theta^a_{ab}(s))&=(-4\chi_{c<b<p}-4\chi_{c<a<p})\theta^a_{ab}(s)+(2-4\chi_{c<a})\theta^c_{bc}(s)+(-2+4\chi_{p<a})\theta^p_{bp}(s)\\
&\hspace{5mm}+\varepsilon_{pqabc}\big(\theta_{ab,bp,cq}(s)+\theta_{ab,bc,pq}(s)+\theta_{ap,cb,bq}(s)+\theta_{ac,pb,bq}(s)\big)
\end{align*}

 \end{proposition}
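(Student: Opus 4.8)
My plan is to derive the identity exactly as Proposition \ref{aaa} was derived from Equation \eqref{maxdeg3}, but starting instead from Equation \eqref{deg3n2}. Thus I evaluate \eqref{deg3n2} at a highest weight vector $v=s$, clear the denominators $\tfrac14,\tfrac12$ by multiplying by $4$, and apply the raising operator $x_c\partial_p$ to the resulting identity in $F(\mu)$. Since $c<p$ we have $x_c\partial_p.s=0$, so for every $\eta\in\Hom(F(\lambda),F(\mu))$ one has $x_c\partial_p.(\eta(s))=(x_c\partial_p.\eta)(s)$; this is what lets $x_c\partial_p$ be pushed onto the coefficients $\theta$, where it acts by the explicit formulas of Remark \ref{thetaction}. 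Finally I multiply the whole relation by $\varepsilon_{pqabc}$ and rearrange to isolate $4h_{cp}.(\theta^a_{ab}(s))$.

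The left-hand side is produced by the single cyclic summand $(\alpha,\beta,\gamma)=(a,b,c)$ of \eqref{deg3n2}, which is the one carrying $\theta^a_{ab}$ together with the operator $x_p\partial_c$. Applying $x_c\partial_p$ and using $[x_c\partial_p,x_p\partial_c]=h_{cp}$ produces a term $4\varepsilon_{pqabc}\,h_{cp}.(\theta^a_{ab}(s))$; the companion term $x_p\partial_c.(x_c\partial_p.(\theta^a_{ab}(s)))$ vanishes because $x_c\partial_p.\theta^a_{ab}=0$. Indeed, since $a,b,c,p$ are pairwise distinct, every Kronecker delta occurring in the $\Delta$- and $D$-parts of Remark \ref{thetaction} is zero; this is the role played here by the distinctness of $p,q,a,b,c$, in place of the leading weight hypothesis used in Proposition \ref{aaa}. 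After the final multiplication by $\varepsilon_{pqabc}$ this summand contributes exactly $4h_{cp}.(\theta^a_{ab}(s))$.

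The two degree-three coefficients $\theta_{ab,bc,cq}$ and $\theta_{ac,cb,bq}$ are pure elements indexed by $\mathcal I_3$, so on them $x_c\partial_p$ acts by $-\sum_l D^l_{p\rightarrow c}$, i.e.\ by replacing one occurrence of the letter $c$ by $p$ in each slot. A direct application yields $\theta_{ab,bp,cq},\theta_{ab,bc,pq},\theta_{ap,cb,bq},\theta_{ac,pb,bq}$, which after multiplication by $\varepsilon_{pqabc}$ and transposition to the right-hand side appear with precisely the coefficient shown.

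The remaining work, and the main obstacle, is to reduce the contributions of the other two cyclic summands $(\alpha,\beta,\gamma)=(b,c,a)$ and $(c,a,b)$ to the stated combination of $\theta^a_{ab}(s)$, $\theta^c_{bc}(s)$ and $\theta^p_{bp}(s)$. Here the commutators $[x_c\partial_p,x_p\partial_a]=x_c\partial_a$ and $[x_c\partial_p,x_p\partial_b]=x_c\partial_b$ replace the operator $x_p\partial_\gamma$ by a new one which, through Remark \ref{thetaction}, shifts the superscript of $\theta^a_{\alpha\beta}$ and creates the elements $\theta^c_{bc}$ and $\theta^p_{bp}$ together with further copies of $\theta^a_{ab}$. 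The delicate point is that these summands carry genuine image-actions such as $x_c\partial_a.(\theta^a_{bc}(s))$, which must be rewritten through $(x.\theta)(s)=x.(\theta(s))-\theta(x.s)$; whether the auxiliary operators $x_c\partial_a,\,x_p\partial_a,\,x_c\partial_b,\,x_p\partial_b$ annihilate $s$ — which happens exactly when they are raising, i.e.\ according to the relative order of $a$ with $c$ and $p$ and of $b$ with $c$ and $p$ — is what generates the characteristic functions $\chi_{c<a},\,\chi_{p<a},\,\chi_{c<b<p},\,\chi_{c<a<p}$ in the coefficients. Carrying out this case analysis, normalizing all index orderings by means of $\theta_{rs}=-\theta_{sr}$ and the antisymmetry $\theta^T_{g(I)}=(-1)^{\ell(g)}\theta^T_I$, and verifying that the residual image-action terms recombine into the displayed single applications $\theta^a_{ab}(s),\theta^c_{bc}(s),\theta^p_{bp}(s)$ is the heart of the matter. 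I expect this bookkeeping of signs and characteristic functions, rather than any conceptual point, to be the principal difficulty; the only tools needed are Remark \ref{thetaction}, the $\mathfrak{sl}_5$ commutation relations, and, where comparison of weights is required, Corollary \ref{dominancetheta}.
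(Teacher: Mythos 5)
Your strategy coincides exactly with the paper's: evaluate Equation \eqref{deg3n2} at $v=s$, clear denominators, apply the raising operator $x_c\partial_p$ (legitimate since $c<p$ gives $x_c\partial_p.s=0$), and finally multiply by $\varepsilon_{pqabc}$. Your treatment of the easy pieces is correct and matches the paper's computation step by step: the cyclic summand $(\alpha,\beta,\gamma)=(a,b,c)$ yields $4h_{cp}.(\theta^a_{ab}(s))$ via $[x_c\partial_p,x_p\partial_c]=h_{cp}$, with the companion term killed by $x_c\partial_p.\theta^a_{ab}=0$ (all Kronecker deltas in Remark \ref{thetaction} vanish by distinctness of the indices); the two $\mathcal I_3$-coefficients transform by letter replacement $c\mapsto p$ into the four displayed degree-three terms; and the simplification of the $-(x_p\partial_\gamma.\theta^a_{\alpha\beta})(s)$ part of \eqref{deg3n2} to $-\theta^p_{bc}(s)$ (the paper's intermediate Equation \eqref{coefparta}), which you subsume under ``apply the explicit formulas'', is indeed routine.

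However, your proposal stops where the actual proof begins, and the mechanism you sketch for the remaining two cyclic summands would not, as described, complete it. The paper's proof consists essentially of two explicit preliminary computations,
$x_c\partial_p.(x_p\partial_b.(\theta^a_{ca}(s)))=\chi_{c<b<p}\,\theta^a_{ab}(s)$ and
$x_c\partial_p.(x_p\partial_a.(\theta^a_{bc}(s)))=\chi_{c<a}\,\theta^c_{bc}(s)-\chi_{p<a}\,\theta^p_{bp}(s)+\chi_{c<a<p}\,\theta^a_{ab}(s)$,
and you explicitly defer precisely these (``I expect this bookkeeping \dots to be the principal difficulty''). The concrete obstruction you do not address is this: after the commutation $[x_c\partial_p,x_p\partial_b]=x_c\partial_b$, the unfavourable cases leave \emph{lowering} operators acting on image vectors --- for instance $x_c\partial_b.(\theta^a_{ca}(s))$ when $b<c$, and $x_p\partial_b.(\theta^a_{pa}(s))$ when $c<b<p$ --- and your proposed tool $(x.\theta)(s)=x.(\theta(s))-\theta(x.s)$ is powerless there, since $x.s\neq 0$ for lowering $x$ and $\theta(x.s)$ is not expressible in the displayed quantities. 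The stated identity contains no residual terms of this kind, so each such term must be shown to vanish; this is exactly what the $\chi$-prefactors in the paper's two preliminary computations encode, and it rests on more than the raising/lowering dichotomy on $s$ that you invoke (it leans on the leading-weight and dominance-order context in which the proposition is used, cf.\ Proposition \ref{aaa} and Corollary \ref{dominancetheta}). So: right route and correct easy terms, but the heart of the argument --- the case analysis producing $(-4\chi_{c<b<p}-4\chi_{c<a<p})\theta^a_{ab}(s)+(2-4\chi_{c<a})\theta^c_{bc}(s)+(-2+4\chi_{p<a})\theta^p_{bp}(s)$ and, crucially, the justification that the lowering-operator leftovers die --- is missing, and the bookkeeping as you describe it would get stuck on those leftovers rather than merely being tedious.
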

\begin{proof}
 We start from Equation \eqref{deg3n2}:
 \begin{equation}\label{coefparta}
  \theta_{ab,bc,cq}(s)+\theta_{ac,cb,bq}(s)+\varepsilon_{pqabc}\big(-2\theta^p_{bc}(s)+4x_p\partial_c.(\theta^a_{ab}(s))+4x_p\partial_b.(\theta^a_{ca}(s))+4x_p\partial_a. (\theta^a_{bc}(s))\big)=0.
 \end{equation}
 We want to apply $x_c\partial_p$ to this equation and so we do the following two  preliminary calculations:
\begin{align*}
x_c\partial_p.(x_p\partial_b.(\theta^a_{ca}(s)))&=\chi_{c<b}x_c\partial_p.(x_p\partial_b.(\theta^a_{ca}(s)))\\
&=\chi_{c<b}x_c\partial_b.(\theta^a_{ca}(s))+\chi_{c<b}x_p\partial_b.(x_c\partial_p.(\theta^a_{ca}(s)))\\
&=-\chi_{c<b}\theta^a_{ba}(s)-\chi_{c<b}x_p\partial_b.(\theta^a_{pa}(s))\\
&= \chi_{c<b}\theta^a_{ab}(s)+\chi_{c<b}\chi_{p<b}\theta^a_{ba}(s)\\
&=\chi_{c<b}(1-\chi_{p<b})\theta^a_{ab}(s)\\
&=\chi_{c<b<p}\theta^a_{ab}(s)
\end{align*}

\begin{align*}
x_c\partial_p.(x_p\partial_a.(\theta^a_{bc}(s)))&=\chi_{c<a}x_c\partial_p.(x_p\partial_a.(\theta^a_{bc}(s)))\\
&=\chi_{c<a}x_c\partial_a.(\theta^a_{bc}(s))+\chi_{c<a}x_p\partial_a.(x_c\partial_p.(\theta^a_{bc}(s)))\\
&=\chi_{c<a}(\theta^c_{bc}(s)-\theta^a_{ba}(s))-\chi_{c<a}x_p\partial_a.(\theta^a_{bp}(s))\\
&=\chi_{c<a}\theta^c_{bc}(s)+\chi_{c<a}\theta^a_{ab}(s)-\chi_{c<a}\chi_{p<a}(\theta^p_{bp}(s)-\theta^a_{ba}(s))\\
&=\chi_{c<a}\theta^c_{bc}(s)-\chi_{p<a}\theta^p_{bp}(s)+\chi_{c<a<p}\theta^a_{ab}(s)
\end{align*}

Therefore, if we apply $x_c\partial_p$ to Equation \eqref{coefparta}, using the previous computations, we obtain
 \begin{align*}
  -&\theta_{ab,bp,cq}(s)-\theta_{ab,bc,pq}(s)-\theta_{ap,cb,bq}(s)-\theta_{ac,pb,bq}(s)+\varepsilon_{pqabc}\big(-2\theta^c_{bc}(s)+2\theta^p_{bp}(s)\\&+4h_{cp}.(\theta^a_{ab}(s))+4\chi_{c<b<p}\theta^a_{ab}(s)+4\chi_{c<a}\theta^c_{bc}(s)-4\chi_{p<a}\theta^p_{bp}(s)+4\chi_{c<a<p}\theta^a_{ab}(s)\big)=0
 \end{align*}
hence we get the statement.
\end{proof}

\begin{proposition} \label{xyxzwt}Let $\{h,k,l,m,n\}=[5]$. Then  $\theta_{hk,hl,mn}$ and $\theta^k_{hk}$ do not have the leading weight of $\varphi$.
\end{proposition}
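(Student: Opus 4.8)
The plan is to argue by contradiction. Suppose that for some morphism $\varphi:M(\lambda)\to M(\mu)$ and some highest weight vector $s\in F(\lambda)$ one of $\theta_{hk,hl,mn}$, $\theta^k_{hk}$ has the leading weight $\nu=\mu-\lambda$ of $\varphi$. Relabelling the repeated index as $x=h$ and writing $\{x,y,z,w,t\}=[5]$, the candidate leading symbols become $\theta_{xy,xz,wt}$ and $\theta^y_{xy}$. The first thing I would record is that \emph{every} monomial whose index multiset equals $[5]$ with the letter $x$ doubled is of one of exactly two shapes: a cubic symbol $\theta_{x\alpha,x\beta,\gamma\delta}$ with $\{\alpha,\beta,\gamma,\delta\}=\{y,z,w,t\}$ (six of them), or a linear symbol $\theta^{\tau}_{x\tau}$ with $\tau\in\{y,z,w,t\}$ (four of them). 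All ten have the same weight $\nu$, and a direct computation shows this weight depends only on $x$, namely $\nu_{ij}=\chi_{j=x}-\chi_{i=x}$. These are precisely the ten unknowns $f_\bullet,b_\bullet$ appearing in Proposition \ref{12equazioni}.

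Because $\nu$ is assumed to be the leading weight, each of these ten symbols sends $s$ into the one-dimensional top space $F(\mu)_\mu$, hence every $\theta(s)$ is a scalar multiple of one fixed nonzero highest weight vector $v_\mu$. Thus the ten scalars $(f_\bullet,b_\bullet)$ form a \emph{nonzero} vector. Under this scalarisation the twelve identities of Proposition \ref{12equazioni} become twelve homogeneous linear relations among these ten scalars, relations which do not involve $\mu$. On top of them I would use the weight identities \eqref{9141}--\eqref{9145}, which express $h_{ij}.v_\mu=\mu_{ij}v_\mu$ as scalar combinations of the $f_\bullet,b_\bullet$, together with Proposition \ref{trequarti}, which plays the analogous role for the linear symbols $\theta^{y}_{xy}$. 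Solving the homogeneous system coming from Proposition \ref{12equazioni} and substituting into \eqref{9141}--\eqref{9145} and Proposition \ref{trequarti} pins down the coordinates $\mu_{i,i+1}$, and hence also $\lambda_{i,i+1}=\mu_{i,i+1}-\nu_{i,i+1}$.

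The contradiction is then read off from dominance, exactly as in Lemmas \ref{1245} and \ref{morfismogrado3}: in each case one of the forced coordinates $\mu_{i,i+1}$ or $\lambda_{i,i+1}$ turns out to be negative, which is impossible since $\mu$ and $\lambda$ are dominant. Since the same vector $v_\mu$ carries the nonzero coefficient regardless of which of the ten symbols is nonzero, this single argument rules out $\theta_{hk,hl,mn}$ and $\theta^k_{hk}$ simultaneously: whichever of the ten scalars is nonzero, the solution space of the linear system is incompatible with dominance. To keep the number of configurations finite I would first invoke Corollary \ref{degree3dual}: since $\nu$ is determined by $x$ alone and the dual leading weight corresponds to $x\mapsto 6-x$, it suffices to treat $x\in\{1,2,3\}$, the remaining cases being dual. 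Within each such $x$ the letters $y,z,w,t$ are the four integers of $[5]\setminus\{x\}$, and the signs within a pair or among the three pairs only contribute an overall sign through the $B_3$-equivariance $\theta_{g(I)}=(-1)^{\ell(g)}\theta_I$.

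The step I expect to be the main obstacle is the bookkeeping of the order-dependent signs $(-1)^{\chi_{a<b}}$ that pervade Proposition \ref{12equazioni} and \eqref{9141}--\eqref{9145}: the coefficients of the linear system genuinely depend on the numerical order of $x,y,z,w,t$, so each value of $x$ produces its own sign pattern, and the delicate point is to verify, case by case, that the resulting solution always forces a single dominance coordinate to be strictly negative. A helpful simplification that keeps the computation closed within the ten unknowns is that the symbols of the other index shapes which surface when raising operators are applied---those of type $\theta_{ab,bc,ca}$, $\theta_{ab,ac,ad}$, and the $\theta_{12,45,\ast}$ already disposed of---vanish on $s$ by Lemmas \ref{a,b,c}, \ref{a,b,c,d} and \ref{12e45}, together with the fact that any symbol of weight strictly larger than $\nu$ vanishes by maximality of the leading weight (as in Remark \ref{asdf}).
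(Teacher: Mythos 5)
Your proposal follows essentially the same route as the paper's proof of Proposition \ref{xyxzwt}: reduction to the repeated index $h\in\{1,2,3\}$ via the duality of Corollary \ref{degree3dual}, scalarisation of the ten symbols $f_\bullet,b_\bullet$ on the one-dimensional weight space $F(\mu)_\mu$, the homogeneous linear system of Proposition \ref{12equazioni}, and the comparison of the Cartan actions obtained from Proposition \ref{trequarti} against those from Equations \eqref{9141}--\eqref{9145}, ending in a negative eigenvalue of some $h_{ij}$ on the putative highest weight vector, contradicting dominance of $\mu$. The only difference is that you defer the case-by-case sign bookkeeping, which the paper carries out explicitly for $h=1,2,3$ and which closes exactly as you predict.
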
 
\begin{proof}
We first assume $h=1$ and we let $x=1$, $y=2$, $z=3$, $w=4$, $t=5$. We use notation introduced after the proof of Proposition \ref{12equazioni}
and we observe that, up to a sign, $\theta_{1k,1l,mn}(s)\in \{f_{23}, f_{24}, f_{25}, f_{34}, f_{35}, f_{45}\}$ and $\theta^k_{1k}(s)\in\{b_2,b_3,b_4,b_5\}$.
We solve the linear system provided by Proposition \ref{12equazioni} and
we have:
\begin{itemize}
\item $f_{35}=-f_{45}=-f_{34}$
\item $f_{24}=-f_{25}=-f_{23}$
\item $2b_2=-3f_{34}+2f_{23}$
\item $2b_3=2b_5=2b_4=-f_{34}$.
\end{itemize}

We use Proposition \ref{trequarti} with $a=4$, $b=1$, $c=2$, $p=3$, $q=5$ and we obtain
\begin{align*}
  h_{23}.b_4= &\frac{1}{2}b_2-\frac{1}{2}b_3+\frac{1}{4}(f_{25}+f_{35}+f_{34}+f_{24})\\
  &=\frac{1}{4}(-3f_{34}+2f_{23})+\frac{1}{4}f_{34}+\frac{1}{4}(f_{23}-f_{34}+f_{34}-f_{23})\\
  &=-\frac{1}{2}f_{34}+\frac{1}{2}f_{23}
 \end{align*}
 therefore
 \[
  h_{23}.f_{34}=f_{34}-f_{23}.
 \]
 Now we use Equation \eqref{9142}:
 \[
  2h_{23}.f_{45}=2(b_2-b_3)-f_{34}-f_{24}+(f_{35}+f_{25})
 \]
 i.e.
 \[
  2h_{23}.f_{34}=-3f_{34}+2f_{23}+f_{34}-f_{34}+f_{23}-f_{34}+f_{23}=-4f_{34}+4f_{23}
 \]
 or
 \[
  h_{23}.f_{34}=-2f_{34}+2f_{23}.
 \]
Comparing this with the previous equation we obtain $f_{34}=f_{23}$.

Now we use Equation \eqref{9145}:
\[
2h_{12}.f_{45}=2f_{24}-2f_{25}-2f_{23}
\]
i.e.
\[
2h_{12}.f_{34}=-2f_{23}-2f_{23}-2f_{23}=-6f_{34}
\]
This implies that $f_{34}=f_{23}=0$. It follows that $\theta_{1k,1l,mn}(s)=0$ and $\theta^k_{1k}(s)=0$.

\bigskip

Now let $h=2$ and  $x=2$, $y=1$, $z=3$, $w=4$, $t=5$. Similarly as above
we have, up to a sign, $\theta_{2k,2l,mn}(s)\in \{f_{13}, f_{14}, f_{15}, f_{34}, f_{35}, f_{45}\}$ and $\theta^k_{2k}(s)\in\{b_1,b_3,b_4,b_5\}$.
We solve the linear system provided by Proposition \ref{12equazioni} and
we have:
\begin{itemize}
\item $f_{35}=-f_{45}=-f_{34}$
\item $f_{14}=-f_{15}=-f_{13}$
\item $2b_1=-f_{34}+2f_{13}$
\item $2b_3=2b_4=2b_5=-f_{34}$
\end{itemize}

We use Proposition \ref{trequarti} with $a=4$, $b=2$, $c=1$, $p=5$, $q=3$ and we obtain:
\[
h_{15}.b_4=\frac{1}{2}f_{34}+\frac{1}{2}f_{13}
\]
i.e.,
\[
h_{15}.f_{34}=-f_{34}-f_{13}
\]

Now we use Equations \eqref{9141}, \eqref{9142}, \eqref{9143} and we obtain:
\[
h_{15}.f_{34}=2f_{13}-f_{34}\]

It follows that:
\[
2f_{13}-f_{34}=-f_{34}-f_{13}\]
i.e., $f_{13}=0$, hence $h_{15}.f_{34}=-f_{34}$ which implies $f_{34}=0$. It follows that $\theta_{2k,2l,mn}(s)=0$ and $\theta^k_{2k}=0$.

\bigskip

Now let $h=3$ and  $x=3$, $y=1$, $z=2$, $w=4$, $t=5$. Similarly as above
we have, up to a sign, $\theta_{3k,3l,mn}(s)\in \{f_{12}, f_{14}, f_{15}, f_{24}, f_{25}, f_{45}\}$ and $\theta^k_{3k}(s)\in\{b_1,b_2,b_4,b_5\}$.
We solve the linear system provided by Proposition \ref{12equazioni} and
we have:
\begin{itemize}
\item $f_{15}=f_{24}=-f_{25}=-f_{14}$
\item $f_{45}=-2b_4=-2b_5=-2f_{14}-f_{12}$
\item $2b_1=2b_2=f_{12}$
\end{itemize}

We use Proposition \ref{trequarti} with $a=2$, $b=3$, $c=1$, $p=5$, $q=4$ and we obtain:
\[
-h_{15}(b_2)=\frac{1}{2}f_{12}-\frac{1}{2}f_{14}
\]
i.e.,
\[
h_{15}(f_{12})=f_{14}-f_{12}
\]

Now we use Equations \eqref{9141}, \eqref{9142}, \eqref{9143} and we obtain:
\[h_{15}(f_{45})=3f_{14}+f_{12}.\]

It follows that:
\[h_{15}.f_{14}=-\frac{1}{2}h_{15}.(f_{45}+f_{12})=-2f_{14}\]
hence $f_{14}=0$ and $h_{15}.f_{12}=-f_{12}$ from which it follows that $f_{12}=0$.
We conclude that $\theta_{3k,3l,mn}(s)=0$ and $\theta^k_{3k}(s)=0$.

If $h=4,5$ the result follows from Corollary \ref{degree3dual}.
\end{proof}
Now we can summarize the classification of morphisms of degree 3 in the next result.
\begin{theorem}\label{teorema3}
 Let $\varphi:M(\lambda)\rightarrow M(\mu)$ be a morphism of degree 3. Then $\lambda=(1,1,0,0)$, $\mu=(0,0,1,1)$ and up to a scalar $\varphi=\nabla_C\nabla_B\nabla_C$.
\end{theorem}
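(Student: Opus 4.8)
The plan is to reduce Theorem \ref{teorema3} to the battery of structural lemmas just established, by enumerating the possible leading terms. First I would fix a highest weight vector $s\in F(\lambda)$ and invoke Proposition \ref{leading}: the leading term of the singular vector $\varphi(1\otimes s)$ is nonzero, so in the expansion $\Phi=\sum_{I\in\mathcal I_3/B_3}\omega_I\otimes\theta_I+\sum_{t,J}\partial_t\omega_J\otimes\theta^t_J$ provided by Corollary \ref{basi} at least one coefficient $\theta_I$ (with $I\in\mathcal I_3/B_3$) or $\theta^t_J$ (with $t\in[5]$, $J\in\mathcal I_1/B_1$) has the leading weight $\nu=\mu-\lambda$ of $\varphi$. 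Since $\omega_I=0$ whenever two of its pairs coincide or are opposite, any $\omega_I$ entering the leading term is indexed by three distinct two-element subsets of $[5]$; reading these as edges of a graph on $[5]$ leaves exactly four types: the triangle $\theta_{ab,bc,ca}$, the star $\theta_{ab,ac,ad}$, the cherry-plus-edge $\theta_{xy,xz,wt}$ (all five indices distinct), and the path, which up to the $B_3$-action is $\theta_{xy,zw,xw}$.

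Next I would dispose of the pure degree-three part. The triangle is excluded by Lemma \ref{a,b,c}, the star by Lemma \ref{a,b,c,d}, and the cherry-plus-edge by Proposition \ref{xyxzwt}. For the path, Lemma \ref{12e45} forces the two disjoint edges to be $\{1,2\}$ and $\{4,5\}$ and the connecting edge $\{k,l\}$ to have one endpoint in each, so $\{k,l\}\in\{\{1,4\},\{1,5\},\{2,4\},\{2,5\}\}$; Lemma \ref{1245} then rules out $\{1,4\}$, $\{2,4\}$ and $\{2,5\}$, leaving only $\theta_{12,45,15}=\theta_{12,15,45}$. Thus among the $\theta_I$ only $\theta_{12,15,45}$ can carry the leading weight, and by Lemma \ref{morfismogrado3} this produces exactly $\lambda=(1,1,0,0)$, $\mu=(0,0,1,1)$ and $\varphi=\nabla_C\nabla_B\nabla_A$ up to a scalar.

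The step I expect to be the main obstacle is preventing the leading term from living entirely in the mixed part $\sum\partial_t\omega_J\otimes\theta^t_J$, since Proposition \ref{xyxzwt} only directly handles the coefficients $\theta^k_{hk}$ whose superscript lies in the pair. Here the key is a weight computation: using $\lambda(\partial_t)=\lambda(d_{t^c})$ (as in the proof of Corollary \ref{dominancetheta}) together with Proposition \ref{dominance}, one checks that every basis vector $\partial_t\omega_{ab}$ shares its weight with a degree-three $\omega_I$, namely with a cherry-plus-edge when $t\in\{a,b\}$ and with the path $\theta_{ab,ac,bd}$ (where $\{a,b,c,d\}=[5]\setminus\{t\}$) when $t\notin\{a,b\}$. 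Hence the leading weight $\nu$, even if it is realised by some $\theta^t_J$, is always the weight of a cherry or a path. In the cherry case Proposition \ref{xyxzwt} already couples and annihilates $\theta^k_{hk}$ simultaneously with $\theta_{hk,hl,mn}$, so no such leading term survives; in the path case the relevant instance of Equation \eqref{maxdeg3} (exactly the relation exploited in Remark \ref{asdf} and in Lemma \ref{morfismogrado3}, where $-2\theta^3_{15}(s)=\theta_{12,15,45}(s)$) slaves $\theta^t_{ab}$ to the path coefficient $\theta_{ab,ac,bd}$, so a nonzero $\theta^t_{ab}(s)$ of weight $\nu$ forces a nonzero path coefficient of the same weight and returns us to Lemmas \ref{12e45}, \ref{1245} and \ref{morfismogrado3}.

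Assembling these observations, the unique leading weight compatible with a degree-three morphism is $\nu=\lambda(\theta_{12,15,45})=(-1,-1,1,1)$, and the morphism is forced to be $\nabla_C\nabla_B\nabla_A$ with $\lambda=(1,1,0,0)$ and $\mu=(0,0,1,1)$. I would emphasise that the only genuinely delicate point is the bookkeeping that excludes a leading term supported on the $(\C^5)^*\otimes\wedge^2\C^5$ summand of $(U_-)_3$; the weight coincidences above, feeding into the coupling already built into Proposition \ref{xyxzwt} and Lemma \ref{morfismogrado3}, are precisely what close this gap.
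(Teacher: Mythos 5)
Your proposal is correct and follows essentially the same route as the paper, whose proof of Theorem \ref{teorema3} is literally a one-line citation of Lemmas \ref{a,b,c,d}, \ref{a,b,c}, \ref{12e45}, \ref{1245}, \ref{morfismogrado3} and Proposition \ref{xyzwt} --- more precisely Proposition \ref{xyxzwt} --- so your enumeration of the four graph types (triangle, star, path, cherry-plus-edge) is exactly the implicit case analysis being invoked. The one place where you go beyond the paper is the exclusion of a leading term supported only on $\partial_t\omega_{uv}$ with $t\notin\{u,v\}$: the cited results cover the coefficients $\theta_I$ and $\theta^k_{hk}$, but not $\theta^t_{uv}$ with $t$ outside the pair, and your patch is sound --- the weight of $\partial_t\omega_{uv}$ is $2\epsilon_u+2\epsilon_v+\epsilon_w+\epsilon_r$ (with $\{t,u,v,w,r\}=[5]$), which is a path weight with middle edge $\{u,v\}$, and Equation \eqref{maxdeg3} applied with $p=t$, $L=Q$, $H=(u,v)$ (the $\chi_{p>\gamma}$-terms vanishing on $s$ because the corresponding coefficients have weight strictly above $\nu$) expresses $2\varepsilon_{tqabc}\,\theta^t_{uv}(s)$ as a signed sum of the two path coefficients $\theta_{ev,uv,uq}(s)$ and $\theta_{eu,uv,vq}(s)$, which have already been annihilated by Lemmas \ref{12e45} and \ref{1245} except in the single surviving configuration handled by Lemma \ref{morfismogrado3} (where indeed $\theta^3_{15}$ is slaved to $\theta_{12,15,45}$). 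Note finally that the paper's statement of the theorem contains a typo: the morphism is $\nabla_C\nabla_B\nabla_A$, as you wrote, not $\nabla_C\nabla_B\nabla_C$.
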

\begin{proof}
 This follows from Lemmas \ref{a,b,c,d}, \ref{a,b,c}, \ref{12e45}, \ref{1245}, \ref{morfismogrado3} and Proposition \ref{xyxzwt}.
\end{proof}

\end{document}